\definecolor{darkblue}{rgb}{0,0,0.4}
\newtheorem{thm}{Theorem}[section]
\newtheorem{lem}[thm]{Lemma}
\theoremstyle{remark}
\newtheorem{rem}[thm]{Remark}
\newcommand{\ie}{{\it i.e.}}
\newcommand{\eg}{{\it e.g.}}
\newcommand{\wto}{\rightharpoonup}
\begin{document}
\title{Diffusion generated methods for \\ denoising target-valued images}

\author{Braxton Osting and Dong Wang}
\address{Department of Mathematics, University of Utah, Salt Lake City, UT}
\email{\{osting,dwang\}@math.utah.edu}
\thanks{B. Osting is partially supported by NSF DMS 16-19755 and DMS 17-52202. }

%\author{Dong Wang}
%\address{Department of Mathematics, University of Utah, Salt Lake City, UT}
%\email{dwang@math.utah.edu}

%\subjclass[2000]{Primary }
%    For articles to be published after 1 January 2010, you may use
%    the following version:
\subjclass[2010]{
65M12, % NA: Stability and convergence of numerical methods
65K10, % NA: Optimization and variational techniques
35K05, % PDE: Heat equation
49Q99} % Calculus of variations and optimal control, None of the above, but in this section 

\keywords{denoising, image analysis, Merriman-Bence-Osher (MBO) diffusion generated motion, manifold-valued data, DT-MRI, line field.}

\date{\today}

\begin{abstract} 
We consider the inverse problem of denoising an image where each point (pixel) is an element of a target set, which we refer to as a target-valued image. The target sets considered are either (i) a closed convex set of Euclidean space or (ii) a closed subset of the sphere such that the closest point mapping is defined almost everywhere. The energy for the denoising problem consists of an $L^2$-fidelity term which is regularized by the Dirichlet energy. A relaxation of this energy, based on the heat kernel, is introduced and the associated minimization problem is proven to be well-posed. We introduce a diffusion generated method which can be used to efficiently find minimizers of this energy. We prove results for the stability and convergence of the method for both types of target sets.  The method is demonstrated on a variety of synthetic and test problems, with associated target sets given by the semi-positive definite matrices, the cube, spheres, the orthogonal matrices, and the real projective line. 
\end{abstract}

\maketitle

\section{Introduction}
Let $\Omega \subset \mathbb R^d$ be a Euclidean set with smooth boundary. We consider noisy \emph{target-valued data}, $f \colon \Omega \to \mathbb R^k$, that takes values in (or near) a certain \emph{target set}, $T \subset \mathbb R^k$. We assume that either 
\begin{enumerate}
\item[(i)] $T\subset \mathbb R^k$ is a closed convex set or 
\item[(ii)] $T\subset \mathbb R^k$ is a closed subset of the unit sphere, $\mathbb S^{k-1}$, such that the closest point mapping, $\Pi_T$, is defined almost everywhere; see Section \ref{s:TargetSetSphere}. 
\end{enumerate} 
Our goal will be to find a smooth map, $u \colon \Omega \to T$, that approximates the data, $f$. For $\alpha > 0$, we consider the general inverse problem, 
\begin{equation} \label{eq:original}
\inf_{u \colon \Omega \to T } \ E_\alpha(u), 
\qquad \qquad \textrm{where} \quad
E_\alpha(u)  = \frac{1}{2} \int_{\Omega} | \nabla u(x) |^2  \ dx  + \frac{1}{2 \alpha} \int_{\Omega} |u(x) - f(x)|^2 \ dx. 
\end{equation}
For a vector valued function $u$, the gradient in $E_\alpha$ should be interpreted component-wise. 
The parameter $\alpha$ controls the tradeoff between the `smoothness of $u$' and the `fidelity to the data'; in the limit  $\alpha \searrow 0$, we obtain $u \equiv f$ on $\Omega$. For some target sets, $T$, it is difficult, either analytically or computationally, to handle the constraint that $u$ take values in $T$. (This is why we didn't specify the class of functions $u \colon \Omega \to T$ to take the infimum in \eqref{eq:original}.)

\subsection*{One penalization approach} 
Suppose, for the target set $T$, there exists a smooth \emph{auxiliary function}, $L \colon \mathbb R^k \to \mathbb R_+$, such that $T = L^{-1}(0)$, \ie, $T$ is the zero-level set and set of global minimizers of the non-negative function $L$. In this case, since $T = \arg\min_x L(x) \subset \mathbb R^k$, we can use the function $L$ to \emph{penalize} when $u$ does not take values in $T$. For $\varepsilon >0$, one may consider the relaxation of \eqref{eq:original}, 
\begin{equation}  
\label{eq:relax}
\inf_{u \in H^1(\Omega;\mathbb R^k )}  \ F_{\alpha,\varepsilon}(u), 
\quad \textrm{where} \ 
F_{\alpha,\varepsilon}(u) =  \frac{1}{2} \int_\Omega  |\nabla u(x) |^2 \ dx+ \frac{1}{2 \alpha} \int_{\Omega} |u(x) - f(x)|^2 \ dx + \ \frac{1}{\varepsilon^2}  \int_\Omega L \left( u(x) \right) \ dx. 
\end{equation}
Together, the first and third terms of $F_{\alpha,\varepsilon}$ define a prior; the image is assumed to take values in $T$ and be smooth. Clearly, for $\varepsilon$ small, minimizing sequences must take values very near $T$. 
When there is no data present, \ie, $\alpha \to \infty$, the energy in \eqref{eq:relax} simplifies to the geometric problem
\begin{equation} \label{e:geo}
\inf_{u \in H^1(\Omega;\mathbb R^k )} \ F_{\infty,\varepsilon}(u), 
\qquad \textrm{where} \  
F_{\infty,\varepsilon}(u) = 
\frac{1}{2}  \int_\Omega  |\nabla u(x) |^2 \ dx+ \ \frac{1}{\varepsilon^2}  \int_\Omega L \left( u(x) \right) \ dx. 
\end{equation}
Energies of this general form, as well as the language ``target set'',  appear, \eg, in \cite{Rubinstein_1989}. 
It is difficult to prove general theorems about when solutions of \eqref{eq:relax} or \eqref{e:geo} converge to solutions of \eqref{eq:original} (if they exist!) as $\varepsilon \searrow 0$ for general target sets, $T$; generally each target set is treated on a case-by-case basis. 

In what follows, we describe in more detail a few choices of target set, $T$, in \eqref{eq:original}, their associated auxiliary functions in  \eqref{eq:relax} and \eqref{e:geo}, along with numerous applications; a summary of various choices of $T$ is given in Table \ref{t:geo}. Our intent is to motivate an alternative relaxation of  \eqref{eq:original}, discussed below, which can be applied to all of these choices of $T$.

\begin{table}[t!]
\begin{center}
\begin{tabular}{lllll}
$k$ & $T$ & L(x) & comment & section \\
\hline
$k$ & $\mathbb R^k$ & $0$ & harmonic function & \\
$k$ & $T \subset \mathbb R^k$ convex & $\frac{1}{2}\textrm{dist}^2(x, T)$ & convex set-valued field & \S\ref{s:Peppers} \\
$n^2$ & $\textrm{SPD}(n)$ & $\frac{1}{2}\textrm{dist}^2\left(x, \textrm{SPD}(n)\right)$ & $\textrm{SPD}$ matrix-valued field &  \S\ref{s:SPD}, \S\ref{s:MRI} \\
\hline
1 & $\{\pm 1 \}$ & $\frac{1}{4} (x^2-1)^2$ &  Allen-Cahn  & \\
2 & $\mathbb S^1$ & $\frac{1}{4} ( |x|^2-1)^2$ & Ginzburg-Landau  &  \S\ref{s:Peppers} \\
k & $\mathbb S^{k-1}$ & $\frac{1}{4} ( |x|^2-1)^2$ & sphere-valued field & \S\ref{s:lemniscate}, \S\ref{s:S2}  \\
$n^2$ & $O(n)$ & $\frac{1}{4} \| x^t x - I_n \|^2_F $ &   orthogonal matrix-valued field  & \\
\hline
$k$ & coordinate axes, $\Sigma_k$ &  $\frac{1}{4} \sum_{i \neq j } x_i^2 x_j^2$ & Dirichlet partitions & \\
&$ \mathbb{RP}^1$& & line field & \S\ref{s:rpk}, \S\ref{s:finger}
\end{tabular}
\end{center}
\caption{Examples of target sets, $T$, and penalization functions, $L$. We've grouped the examples by convex sets (top), subsets of the Euclidean sphere (middle), and other (bottom).}
\label{t:geo}
\end{table}

\subsection*{Target-valued maps and applications in imaging, inverse problems, and geometry} 

\subsubsection*{Subsets of the Euclidean sphere}
For $T=\{\pm 1 \} = \mathbb S^0 \subset \mathbb R^1$,  the auxiliary function $L(x) = \frac{1}{4} (x^2-1)^2$ can be used. The energy, $F_{\infty,\varepsilon}$ in \eqref{e:geo}, corresponds to the Allen-Cahn equation \cite{Allen_1979}. 
Modica and Mortola showed that a minimizing sequence  $(u_\varepsilon)$ of $\varepsilon F_{\infty,\varepsilon}$ converges (along a subsequence) to $\chi_D - \chi_{\Omega \setminus D}$ in $L^1$ for some $D \subset \Omega$. 
Furthermore,   
$\varepsilon F_{\infty,\varepsilon}(u_\varepsilon) \to \frac{2\sqrt{2}}{3} \mathcal{H}^{d-1}(\partial D)$ as $\varepsilon \to 0$ \cite{modica1977esempio}. For small $\varepsilon>0$, the gradient flow of $ F_{\infty,\varepsilon}$ approximates mean curvature flow. This energy serves as a building block for a variety of pattern formation models. 
When a data fidelity term is added, as in \eqref{eq:relax}, we obtain the Rudin-Osher-Fatemi (ROF) functional, 
$$  \int_\Omega |\nabla u | + \frac{1}{2 \alpha} \int_\Omega (u - f)^2.
$$
When used for image denoising, the total variation term serves as an `edge preserving regularizer'  \cite{Rudin_1992}.

For $T = \mathbb S^1 \subset \mathbb R^2$, the auxiliary function $L(x) = \frac{1}{4} (|x|^2-1)^2$ can be used. The energy in \eqref{e:geo} corresponds to the  Ginzburg-Landau equation \cite{bethuel_ginzburg-landau_1994} and has applications in  superconductors and superfluids. 
In imaging, the target set $T = \mathbb S^1$ naturally arises when one tries to recover spatially dependent phase information, such as in Interferometric Synthetic Aperture Radar (InSAR) \cite{Rocca1997,Kampes2006}. Here, the phase difference between an interferometric pair of SAR images, obtained from slightly different camera angles, can be used to construct very accurate elevation maps. 
The solution of \eqref{e:geo} with additional boundary conditions imposed can also be used to design $d=2$-dimensional cross fields, which have applications in, \eg, computer graphics and quad mesh generation \cite{Viertel2017}.  
Finally, this problem is related to the simplification of vector fields for visualization  \cite{Skraba_2015}. 

For $T = \mathbb S^{2} \subset \mathbb R^3$,  the gradient flows of these energies are related to the heat flow of harmonic maps to $\mathbb S^2$ \cite{weinan2001numerical}. 
These equations can be obtained as simplifications of the Landau-Lifschitz equation describing  non-equilibrium magnetism. 
This is also a simplification of the energy (the one-constant approximation) appearing in the Oseen-Frank theory for liquid crystals, where the field represents the preferred direction of molecular alignment \cite{Majumdar_2009,Ball_2017}.

The problem in \eqref{e:geo} with $T = O(n) \subset \{ x \in \mathbb R^{n\times n} \colon \|  x \|_F = 1 \}$ was recently studied by the authors in \cite{OnValuedFields}. Here, it is natural to associate the auxiliary function $L(x) = \frac{1}{4} \| x^t x - I_n \|^2_F $, where $\|\cdot \|_F$ denotes the Frobenius norm. Since $O(1) \cong S^0$, this energy reduces to the Allen-Cahn energy when $n=1$. Recalling that  $O(n) = SO(n) \sqcup SO^-(n)$ and $SO(2) \cong S^1$, for  $n=2$, the gradient flow of this energy with initial conditions taken in $SO(2)$ reduces to the Ginzburg-Landau gradient flow  \cite{OnValuedFields}. This energy can be considered as a model problem for crystallography, where one considers a field that takes values in $SO(3)$ modulo the symmetry group of the crystal. 
This is also a model problem for the three-dimensional cross field design problem \cite{Viertel2017}. 
Finally, this problem is related to problems in rigid motion planning, where one tries to find a time-dependent trajectory, $u \colon [t_1,t_2] \to T$, where the function takes values in a set that describes admissible rigid motions, such as, \eg, $T=SO(3)$ \cite{Sciavicco_2000}.

\subsubsection*{Convex sets}
When the target set $T$ is a convex set of $\mathbb R^k$, we can generally take the auxiliary function to be $L(x) = \frac{1}{2} \textrm{dist}^2(x, T) = \frac{1}{2} \min_{y \in T} \ \textrm{dist}^2(x, y)$. For RGB images, the image takes values in the cube, $T=[0,1]^3 \subset \mathbb R^3$, as further discussed in Section \ref{s:Peppers}.

For $T = \textrm{SPD}(n)$, the set of semi-positive definite (SPD) matrices, the inverse problem in \eqref{eq:relax} appears in diffusion tensor magnetic resonance imaging (MRI) \cite{Wandell_2016,Lenglet_2009}. This application will be further discussed in Section \ref{s:MRI}. 
 
\subsubsection*{Other target sets}
For the coordinate axis, $T = \Sigma_k := \{ x \in \mathbb R^k \colon \sum_{i\neq j} x_i^2 x_j^2 = 0\}$,  the minimizer of \eqref{e:geo} with an additional norm constraint gives Dirichlet partitions of $\Omega$ in the limit as $\varepsilon \to 0$; see \cite{caffarelli2007optimal,MBOforDirPart2018}. Recently, Dirichlet partitions have been used for image segmentation and data clustering \cite{Partition2013,Zosso2015,Reeb2016}. 

The target set, $T = \mathbb{RP}^n$ is related to the Landau-de Gennes model, where the field describing the local orientation of a crystal is described by a $Q$-tensor \cite{Majumdar_2009,Ball_2017}. While this theory was originally used to describe nematic liquid crystals, it has also been used to describe the orientations of RNA and carbon nanotubes.
Thinking of real projective space as the quotient space obtained from the $n$-sphere after identifying antipodal points, fields with values in $T=\mathbb{RP}^1$ are referred to as  \emph{line fields}, where a pair of antipodal directions is assigned to each point.   This application will be further discussed in Section \ref{s:finger}. 

\bigskip

Further discussion of inverse problems for manifold-valued images can be found in  \cite{Weinmann_2014,Bacak_2016,Grohs_2016,Laus_2017}. 

\begin{algorithm}[t!]
\DontPrintSemicolon
 \KwIn{Let $\tau, \lambda> 0$. Set $\Omega \in \mathbb{R}^d$, the target space as $T \in \mathbb{R}^k$, the data as $f \in L^\infty(\Omega;\mathbb R^k)$ and the initial guess as $u_0 = f$.}
 \KwOut{ A function $u \in L^\infty(\Omega; T) $ that approximately minimizes   \eqref{e:relax2}.}
 Set $s=1$.\;
 \While{not converged}{
{\bf 1.  Diffusion Step.} Extend $u_{s-1}$ and $f$ to $\mathbb R^d \setminus \Omega$ by zero. 
Solve the initial value problem for the free space diffusion equation until time $\tau$:
\begin{align*}
&\partial_t v(t,x) = \Delta v (t,x) \\
&v(0,x) = (1-\lambda) u_{s-1}(x)+ \lambda f(x).
\end{align*}
Let $\tilde u(x) = v(\tau,x)$\; 
{\bf 2. Projection Step.} Define $u_s \in L^\infty(\Omega; T)$ by point-wise applying $\Pi_T$ to  $\tilde u$,
$$
u_s(x) = \Pi_T \tilde u(x) \qquad \qquad x \in \Omega.
$$
Set $s = s+1$.\;
 }
\caption{A diffusion generated method for approximating minimizers of the energy in  \eqref{e:relax2}. } 
\label{a:alg1}
\end{algorithm}

\subsection*{Results}
In this paper, we derive and study an alternative relaxation of \eqref{eq:original} than  \eqref{eq:relax} based on the heat kernel. Namely, for $\tau > 0$, $\lambda \in (0,1)$, and $f \in L^\infty(\Omega; \mathbb R^k)$, we consider the  relaxation of \eqref{eq:original} given by 
\begin{subequations} \label{e:relax2}
\begin{equation} \label{eq:relaxed}
\inf_{u \in L^2(\Omega; T)} \ E_{\lambda,\tau}(u),
\end{equation}
where
\begin{equation} \label{e:EnergyLambda}
E_{\lambda,\tau}(u) =  - \frac{1}{2} \langle u, (e^{\Delta \tau} - I) u \rangle + \frac{\lambda}{2} \langle u- f, e^{\Delta \tau} (u-f) \rangle. 
\end{equation}
\end{subequations}
Here, $\langle \cdot, \cdot \rangle$, is the $L^2(\Omega;\mathbb R^k)$-inner product and $e^{\Delta \tau}$ denotes the solution operator for the free space diffusion equation at time $\tau$. If $u$ is a vector-valued field, the diffusion operator is understood to be applied component-wise. 
Here, the parameter $\tau$ measures the relaxation of the problem and the parameter $\lambda = \frac{\tau}{\alpha}$ controls the data fidelity. 
The two terms in \eqref{e:EnergyLambda} come from relaxing the two terms in the energy in \eqref{eq:original}.  This is explained in Section~\ref{s:RelaxProb}, together with conditions on the target set $T$ such that \eqref{e:relax2} is well-defined, and interpretations of \eqref{e:relax2}. 
The second term of $E_{\lambda,\tau}$ in \eqref{e:EnergyLambda} is similar to the region-based active contour model in \cite{li2008minimization}, where the idea is to use a nonlocal fidelity term to characterize the image better. 

The main contribution of this paper is to derive and analyze a diffusion generated method to solve \eqref{e:relax2} for a wide class of target sets, $T$, including those discussed in Table~\ref{t:geo}. 
The proposed algorithm is given in Algorithm~\ref{a:alg1}. 
The Algorithm consists of taking a convex combination of the previous time step and the data, diffusing until time $\tau$, and applying a map, $\Pi_T$, point-wise to the resulting function. Here $\Pi_T$ is the convex projection onto the target set, $T$, in the case that $T$ is convex and the closest point mapping otherwise.  A derivation of Algorithm~\ref{a:alg1} for the target sets considered, as well as convergence properties of the algorithm are given in Section~\ref{s:alg}. 

Algorithm~\ref{a:alg1}  is conceptually simple, computationally efficient, easy to implement, and applicable to a broad class of problems.  
Algorithm~\ref{a:alg1} can be interpreted as a splitting method for  \eqref{eq:relax}; see Section \ref{s:EnergySplit}. However, we prefer to interpret 
Algorithm~\ref{a:alg1} in terms of \eqref{e:relax2} since neither rely on the auxiliary function $L$  as in  \eqref{eq:relax}.  There are two extremes for Algorithm~\ref{a:alg1}. If $\lambda = 0$, we ignore the data and find an approximate harmonic function with values in $T$. This is similar to the geometric problem \eqref{e:geo} discussed above. If $\lambda = 1$, we simply dampen the highly oscillatory terms in $f$ and apply the mapping $\Pi_T$ once. 

\begin{rem}
The argument in \cite[Section 4.1]{LauxYip2018} shows that the output of Algorithm \ref{a:alg1} satisfies homogeneous Neumann boundary conditions on $\partial \Omega$. Dirichlet boundary conditions are also discussed by Laux and Yip, but we don't impose these conditions in the present work. 
\end{rem}

In Section~\ref{s:CompEx}, we use the proposed method to study a variety of synthetic and test numerical experiments. 
The target sets considered include 
convex sets (\S\ref{s:Peppers}),
$\textrm{SPD}(3)$ (\S\ref{s:SPD}, \S\ref{s:MRI}), 
$\mathbb S^1$ (\S\ref{s:Peppers}), 
$\mathbb S^2$ (\S\ref{s:lemniscate}, \S\ref{s:S2}),  and
$ \mathbb{RP}^1$ (\S\ref{s:rpk}, \S\ref{s:finger}), 
which have applications as described above. 

\subsection*{Previous work on diffusion generated methods}
The proposed method (Algorithm~\ref{a:alg1}) falls into the class of  diffusion generated methods (DGMs). 
DGMs were first introduced for $T = \{\pm 1 \}$ and showed to be associated with mean curvature flow  in \cite{MBO1993,merriman1992diffusion}. 
DGMs have also been generalized to generate high order geometric motions, such as Wilmore and surface diffusion flows, in \cite{esedoglu2008threshold}. 
In \cite{esedoglu2010diffusion}, the authors used the diffusion of the distance function to generate mean curvature flow where the thresholding step was replaced by redistancing. 
DGMs were recently shown to be stable and generalized to multiphase mean curvature flow in \cite{esedoglu2015threshold} and applied to wetting problems in \cite{xu2016efficient}. 
DGMs have been used for inverse problems for $T = \{\pm 1 \}$ in \cite{wang2017efficient,esedog2006threshold}. The convergence rate of a DGM to a stationary point was proven in \cite{OnValuedFields}. DGMs for $T = \mathbb S^1$ were introduced in  \cite{Ruuth_2001}, used for quad mesh generation  in \cite{Viertel2017}, and proven to be  convergent  in \cite{LauxYip2018}. DGMs for $T= \mathbb S^2$ were also studied in \cite{weinan2001numerical}. 
Finally, DGMs for $T= O(n)$ was introduced and studied in  \cite{OnValuedFields}. 

\subsection*{Outline}
In Section~\ref{s:RelaxProb}, we describe properties of \eqref{e:relax2}. 
In Section~\ref{s:alg}, we derive and study Algorithm~\ref{a:alg1} for the two types of target sets considered. 
In Section~\ref{s:CompEx}, we use the proposed methods to study a variety of numerical experiments. 
We conclude in Section~\ref{s:Disc} with a discussion.

\section{Properties and interpretation of the relaxed problem, \eqref{e:relax2}} \label{s:RelaxProb}
In this section, we motivate and derive properties of the energy, $E_{\lambda,\tau}$, in \eqref{e:relax2}, show the existence of solutions to \eqref{e:relax2}, and give two interpretations of $E_{\lambda,\tau}$. 

\subsection{Motivation and properties of $E_{\lambda,\tau}$}
For $u_0 \in L^1(\Omega; \mathbb R^k)$, 
we write $e^{\Delta \tau} u_0$ to denote the solution to the free space heat equation with initial condition $u_0$ at time $\tau$,
$$
\Big(e^{\Delta \tau} u_0\Big) (x) = u(x,\tau) = (4 \pi \tau )^{-d/2} \int_{\Omega} e^{-|x - y |^2/4\tau} u_0(y) \ dy, \qquad \qquad x \in \Omega. 
$$ 
Let $\langle u, v \rangle = \int_{\Omega} u(x)\cdot v(x) \ dx $ denote the $L^2(\Omega;\mathbb R^k)$ inner product. Here $\cdot$ is interpreted as the dot product in $\mathbb R^k$ or the Frobenius inner product if $u$ and $v$ are matrix valued fields.

For $\tau \sim o(1)$, we write
\begin{equation} \label{eq:DirHeat}
\| \nabla u \|^2 = \langle u, -\Delta u \rangle \approx - \frac{1}{\tau} \langle u, (e^{\Delta \tau} - I) u \rangle. 
\end{equation}
Also, we have
\begin{equation} \label{eq:regionbased}
\| u-f \|^2 = \langle u-f,u-f \rangle \approx \langle u-f, e^{\Delta \tau} (u-f) \rangle.
\end{equation}
Then, using \eqref{eq:DirHeat} and \eqref{eq:regionbased}, we approximate the energy in the inverse problem \eqref{eq:original} by
$$ 
E_\alpha(u) =  -\frac{1}{2\tau}  \langle u, (e^{\Delta \tau} - I) u \rangle  + \frac{1}{2 \alpha}  \langle u- f, e^{\Delta \tau} (u-f) \rangle  \ \ +  \ \ O(\tau). 
$$
Defining $\lambda = \frac \tau \alpha$, we obtain 
$$
E_\alpha(u) =  \tau^{-1}  E_{\lambda, \tau}(u)  \ +  \ O(\tau), 
$$
where $E_{\lambda, \tau}$ is given in  \eqref{e:EnergyLambda}.  

\begin{lem} \label{lem:function}
Assume $\lambda \in (0,1)$, $\tau > 0$, and $f \in L^\infty(\Omega;\mathbb R^k)$. Then the following properties hold for the functional $E_{\lambda,\tau}$ defined in \eqref{e:EnergyLambda}. 
\begin{enumerate}
\item[(i)] $E_{\lambda,\tau}$ is non-negative on $L^2(\Omega; \mathbb R^k)$. 
\item[(ii)] $E_{\lambda,\tau}$ is continuous with respect to the strong topology on $L^2(\Omega;\mathbb R^k)$.
\item[(iii)] The Fr\'echet derivative of $E_{\lambda,\tau} \colon L^2(\Omega; \mathbb R^k) \to \mathbb R$ with respect to $u$ is 
\[\frac{\delta E_{\lambda,\tau}(u)}{\delta u} = - \left( e^{\Delta \tau} - I \right) u  + \lambda e^{\Delta \tau} (u-f) . \]
\item[(iv)] The first variation, $\frac{\delta E_{\lambda,\tau}}{\delta u} \colon L^2(\Omega; \mathbb R^k) \to L^2(\Omega; \mathbb R^k)$ is Lipschitz continuous with Lipschitz constant $L = 2- \lambda$. 
\item[(v)] $E_{\lambda,\tau}$ is strongly convex on $L^2(\Omega; \mathbb R^k)$ with constant $\lambda$.
\item[(vi)] We have the bound
$$ 
E_{\lambda,\tau}(u) \geq \frac{\lambda }{2} \left( \| u \| - \left\| e^{\Delta \tau} f \right\| \right)^2 + \frac{\lambda}{2} \left( \left\| e^{\Delta \tau /2} f \right\|^2 -  \left\| e^{\Delta \tau} f \right\|^2 \right),
$$
so the sublevel sets of $E_{\lambda,\tau}$ are bounded in $L^2(\Omega; \mathbb R^k)$. 
\end{enumerate}
\end{lem}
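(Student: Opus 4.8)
The plan is to treat $E_{\lambda,\tau}$ as a quadratic functional governed by the single operator $P := e^{\Delta\tau}$, acting on $L^2(\Omega;\mathbb R^k)$ after extension by zero to $\mathbb R^d$, and to exploit three structural facts: $P$ is self-adjoint; $P$ is a contraction, so $0 \preceq P \preceq I$; and by the semigroup property it factors as $P = (e^{\Delta\tau/2})^2$ with $e^{\Delta\tau/2}$ self-adjoint. First I would recast the energy as $E_{\lambda,\tau}(u) = \frac{1}{2}\langle u,(I-P)u\rangle + \frac{\lambda}{2}\langle u-f,P(u-f)\rangle$. Since $P \preceq I$ gives $I-P \succeq 0$, and since $\langle u-f,P(u-f)\rangle = \|e^{\Delta\tau/2}(u-f)\|^2 \geq 0$, both terms are nonnegative, which establishes (i). Part (ii) is then immediate, as each term is a bounded quadratic form because $\|P\| \leq 1$.

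For (iii), I would differentiate term by term using the elementary fact that the Fr\'echet derivative of $u \mapsto \frac{1}{2}\langle u,Au\rangle$ is $Au$ whenever $A$ is self-adjoint; this yields $(I-P)u + \lambda P(u-f) = -(e^{\Delta\tau}-I)u + \lambda e^{\Delta\tau}(u-f)$, as claimed. Since this gradient is affine in $u$, part (iv) reduces to bounding the operator norm of its linear part: $\frac{\delta E}{\delta u}(u) - \frac{\delta E}{\delta u}(v) = [I-(1-\lambda)P](u-v)$, and the triangle inequality gives $\|I-(1-\lambda)P\| \leq \|I\| + (1-\lambda)\|P\| \leq 2-\lambda$, using again that $P$ is a contraction. (The spectral calculation actually shows the norm equals $1$, but the crude estimate $2-\lambda$ is all that is claimed.) For (v), the Hessian of this quadratic functional is exactly the same operator $I-(1-\lambda)P$, and $P \preceq I$ forces $I-(1-\lambda)P \succeq \lambda I$, which is precisely strong convexity with constant $\lambda$.

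The main work is (vi). Here I would expand $E_{\lambda,\tau}$ fully into $\frac{1}{2}\|u\|^2 - \frac{1-\lambda}{2}\langle u,Pu\rangle - \lambda\langle u,Pf\rangle + \frac{\lambda}{2}\langle f,Pf\rangle$. The crucial move is to bound $-\frac{1-\lambda}{2}\langle u,Pu\rangle \geq -\frac{1-\lambda}{2}\|u\|^2$ via $P \preceq I$ — note one must retain the first energy term rather than discard it, so that the quadratic-in-$u$ part collapses to $\frac{\lambda}{2}\|u\|^2$. Applying Cauchy--Schwarz in the form $-\lambda\langle u,Pf\rangle \geq -\lambda\|u\|\,\|e^{\Delta\tau}f\|$ together with the semigroup identity $\langle f,Pf\rangle = \|e^{\Delta\tau/2}f\|^2$, and then completing the square in $\|u\|$, lands exactly on the stated lower bound. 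Boundedness of the sublevel sets follows because the additive constant $\frac{\lambda}{2}\big(\|e^{\Delta\tau/2}f\|^2 - \|e^{\Delta\tau}f\|^2\big)$ is nonnegative — again since $e^{\Delta\tau/2}$ is a contraction — so any bound on $E_{\lambda,\tau}(u)$ controls $(\|u\|-\|e^{\Delta\tau}f\|)^2$. I expect (vi) to be the only step requiring genuine care, precisely because one must keep both energy terms and route the $P\preceq I$, Cauchy--Schwarz, and semigroup estimates so that the algebra produces exactly the claimed square; the remaining parts are routine consequences of self-adjointness and contractivity of $e^{\Delta\tau}$.
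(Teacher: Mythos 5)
Your proposal is correct and takes essentially the same approach as the paper: both treat $E_{\lambda,\tau}$ as a quadratic functional in the self-adjoint contraction $e^{\Delta \tau}$, use the semigroup factorization $e^{\Delta \tau} = \left(e^{\Delta \tau/2}\right)^2$ for non-negativity, and in (vi) carry out the identical expansion, the bound $\langle u, e^{\Delta\tau} u\rangle \leq \|u\|^2$, Cauchy--Schwarz, and completion of the square. The only differences are cosmetic: you phrase (ii), (iv), and (v) in operator language (bounded quadratic forms, operator-norm bounds, Hessian $\succeq \lambda I$), whereas the paper verifies the same estimates by direct computation.
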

\begin{proof}
(i) We compute
\begin{align*}
E_{\lambda,\tau}(u) &= - \frac{1}{2} \langle u, (e^{\Delta \tau} - I) u \rangle + \frac{\lambda}{2} \langle u- f, e^{\Delta \tau} (u-f) \rangle \\
&= \frac{1}{2} \| u \|^2 - \frac{1}{2} \| e^{\Delta \tau / 2} u \|^2 + \frac{\lambda}{2} \| e^{\Delta \tau/2} (u-f) \|^2 \\
&\geq 0
\end{align*}
(ii) Let $u, v \in L^2(\Omega;\mathbb R^k)$. 
Observe that 
$$
E_{\lambda,\tau}(u) = \frac{1}{2} \left\langle u, \left( I - (1-\lambda) e^{\Delta \tau} \right) u \right\rangle - \lambda \left\langle e^{\Delta \tau} f, u \right \rangle + \frac{\lambda}{2} \left\langle f, e^{\Delta \tau} f \right \rangle. 
$$
Using that for $u_0 \in L^{2}(\Omega; \mathbb R^k)$, we have that  $\| e^{\Delta \tau} u_0\|_{L^2(\Omega; \mathbb R^k)} \leq  \| u_0\|_{L^2(\Omega; \mathbb R^k)}$, we compute
\begin{align*}
| E_{\lambda,\tau}(u)  - E_{\lambda,\tau}(v) | 
& = \left| \frac{1}{2} \left\langle  \left( I - (1-\lambda) e^{\Delta \tau} \right) (u+v),   u-v \right\rangle - \lambda  \left\langle e^{\Delta \tau} f, u-v \right \rangle  \right| \\
& \leq  \left(  \| u+v \| + \lambda \left\| e^{\Delta \tau} f \right\| \right) \| u-v \|. 
\end{align*}
Let $\varepsilon > 0$, $\delta = \min \left\{ 1,  \frac{\varepsilon}{1 + 2 \|v \| + \lambda \left\| e^{\Delta \tau} f \right\|} \right\}$, and $\| u-v \| \leq \delta$. Then 
$$
| E_{\lambda,\tau}(u)  - E_{\lambda,\tau}(v) | 
\leq \left(  \| u-v \| + 2\|v\| + \lambda \left\| e^{\Delta \tau} f \right\| \right) \| u-v \|
\leq \varepsilon.
$$
(iii) From the definition, we directly compute 
\[\delta E_{\lambda,\tau}(u) = \left.\frac{d (E_{\lambda,\tau}(u+s \delta u)) }{ds}\right|_{s=0} =\left\langle \delta u, - \left( e^{\Delta \tau} - I \right) u  + \lambda e^{\Delta \tau} (u-f) \right\rangle.
\]  
(iv) For $\lambda \in (0,1)$, we compute
\begin{align*} 
\left\| \frac{\delta E_{\lambda,\tau}(u)}{\delta u}  - \frac{\delta E_{\lambda,\tau}(u)}{\delta v}  \right\| 
& = \| - \left( e^{\Delta \tau} - I \right) (u-v)  + \lambda e^{\Delta \tau} (u-v) \| \\ 
& \leq (1-\lambda) \| e^{\Delta \tau}(u-v)  \| + \| u-v\| \\ 
& \leq (1-\lambda)  \| u-v \| +  \| u - v \| \\
&= L\| u - v\|.
\end{align*} 
(v) From direct calculation, we have for any $u,v\in L^{\infty}(\Omega;T)$ and any $\gamma \in (0,1) $, 
\begin{align*}
\gamma E_{\lambda,\tau}(u) +  (1-\gamma) E_{\lambda,\tau}(v) - E_{\lambda,\tau}(\gamma u + (1-\gamma) v)  
&= - \frac{1}{2} \gamma(1-\gamma) \left\langle u-v, \left( (1-\lambda) e^{\Delta \tau} - I \right)(u-v) \right\rangle \\ 
&\geq - \frac{1}{2} \gamma(1-\gamma) \left( 1 - \lambda - 1\right) \| u - v \|^2  \\ 
& = \frac{\lambda}{2} \gamma(1-\gamma) \|u-v\|^2 .
\end{align*}
This implies that  $E_{\lambda,\tau}$ is strongly convex with constant $\lambda$. \\
(vi) We compute
\begin{align*}
E_{\lambda,\tau}(u) &= - \frac{1}{2} \left\langle u, (e^{\Delta \tau} - I) u \right\rangle + \frac{\lambda}{2} \left\langle u- f, e^{\Delta \tau} (u-f) \right\rangle \\
&= \frac{1}{2} (1-\lambda) \left\langle u,  ( I - e^{\Delta \tau}) u \right\rangle + \frac{\lambda}{2} \| u\|^2 - \lambda \left\langle e^{\Delta \tau} f, u  \right\rangle 
+ \frac{\lambda}{2} \left\| e^{\Delta \tau / 2} f \right\|^2 \\
&\geq \frac{\lambda}{2} \| u\|^2 - \lambda \left\| e^{\Delta \tau} f \right\| \| u  \|  + \frac{\lambda}{2} \left\| e^{\Delta \tau / 2} f \right\|^2 \\
&= \frac{\lambda }{2} \left( \| u \| - \left\| e^{\Delta \tau} f \right\| \right)^2 + \frac{\lambda}{2} \left( \left\| e^{\Delta \tau /2} f \right\|^2 -  \left\| e^{\Delta \tau} f \right\|^2 \right).
\end{align*}
\end{proof}

\subsection{Existence}
We define $L^p(\Omega; T)$ to be the subset of $L^p(\Omega; \mathbb R^k)$ consisting of maps $\Omega \to \mathbb R^k$ with image essentially in $T \subset \mathbb R^k$. 

\begin{thm} \label{prop:Existence}
Let $T \subset \mathbb R^k$ be a closed subset and assume $\lambda \in (0,1)$, $\tau > 0$, and $f \in L^\infty(\Omega;\mathbb R^k)$. Then there exists a solution $u^\star \in L^2(\Omega; T)$ that attains the infimum in \eqref{e:relax2}. 
\end{thm}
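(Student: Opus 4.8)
The plan is to run the direct method in the calculus of variations, using the coercivity and lower-semicontinuity properties recorded in Lemma \ref{lem:function}, and to reduce the general closed set $T$ to the two structural cases of interest: (i) $T$ convex and (ii) $T$ a closed subset of the sphere. First I would fix $m := \inf_{u\in L^2(\Omega;T)} E_{\lambda,\tau}(u)$, which is finite and non-negative by Lemma \ref{lem:function}(i), and choose a minimizing sequence $(u_n)\subset L^2(\Omega;T)$. By the coercivity bound in Lemma \ref{lem:function}(vi) the sublevel sets are bounded, so $(u_n)$ is bounded in $L^2(\Omega;\mathbb R^k)$, and by reflexivity a subsequence converges weakly, $u_n\rightharpoonup u^\star$. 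The only genuine issue — and the main obstacle — is that $L^2(\Omega;T)$ need not be weakly closed when $T$ is not convex (oscillating $T$-valued maps can converge weakly to a map taking values in the interior of $\mathrm{conv}\,T$), so I must argue separately that a minimizing candidate can be taken in $L^2(\Omega;T)$.

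For the convex case (i) I would avoid weak limits altogether and exploit strong convexity directly. Since $T$ is convex, $L^2(\Omega;T)$ is a convex, strongly closed subset of $L^2(\Omega;\mathbb R^k)$, so for near-minimizers $u_n,u_m$ the midpoint $\tfrac12(u_n+u_m)$ is again feasible, and the strong convexity estimate of Lemma \ref{lem:function}(v) (with $\gamma=\tfrac12$) gives
\[
\frac{\lambda}{8}\,\|u_n-u_m\|^2 \le \tfrac12 E_{\lambda,\tau}(u_n)+\tfrac12 E_{\lambda,\tau}(u_m)-E_{\lambda,\tau}\!\left(\tfrac12(u_n+u_m)\right)\le \tfrac12 E_{\lambda,\tau}(u_n)+\tfrac12 E_{\lambda,\tau}(u_m)-m,
\]
whose right-hand side tends to $0$. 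Hence $(u_n)$ is Cauchy, converges strongly to some $u^\star\in L^2(\Omega;T)$ by strong closedness, and the strong continuity of Lemma \ref{lem:function}(ii) yields $E_{\lambda,\tau}(u^\star)=m$; strong convexity also gives uniqueness.

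For the sphere case (ii) the midpoint trick fails, and I would instead use a hidden concavity of the energy along the constraint. On $L^2(\Omega;T)$ with $T\subset\mathbb S^{k-1}$ one has $|u(x)|=1$ a.e., so $\|u\|^2=|\Omega|$ is constant and $E_{\lambda,\tau}$ agrees there with
\[
\Psi(u):=\frac{\lambda-1}{2}\,\big\langle u, e^{\Delta\tau}u\big\rangle-\lambda\big\langle e^{\Delta\tau}f,u\big\rangle+\mathrm{const}.
\]
Because $\lambda<1$ and $e^{\Delta\tau}\succeq 0$, the quadratic term carries a negative coefficient, so $\Psi$ is \emph{concave} on all of $L^2(\Omega;\mathbb R^k)$; moreover $e^{\Delta\tau/2}$ is Hilbert--Schmidt (hence compact) on the bounded domain $\Omega$, so $u\mapsto\langle u,e^{\Delta\tau}u\rangle=\|e^{\Delta\tau/2}u\|^2$ is weakly sequentially continuous and $\Psi$ is weakly continuous. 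I would then enlarge the feasible set to $C:=L^2(\Omega;\overline{\mathrm{conv}}\,T)$, which is bounded, convex and strongly closed, hence weakly compact in the reflexive space $L^2$. By Bauer's minimum principle, the weakly lower-semicontinuous concave functional $\Psi$ attains its minimum over $C$ at an extreme point $\bar u$ of $C$; the extreme points of $C$ have their values a.e.\ in $\mathrm{ext}(\overline{\mathrm{conv}}\,T)$, and since $T$ is compact, Milman's theorem gives $\mathrm{ext}(\overline{\mathrm{conv}}\,T)\subseteq T$, so $\bar u\in L^2(\Omega;T)$. As $L^2(\Omega;T)\subseteq C$ and $E_{\lambda,\tau}=\Psi$ there, for any feasible $v$ we get $E_{\lambda,\tau}(v)=\Psi(v)\ge\Psi(\bar u)=E_{\lambda,\tau}(\bar u)$, so $\bar u$ minimizes $E_{\lambda,\tau}$ over $L^2(\Omega;T)$.

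The hardest part is precisely this non-convex case: although $L^2(\Omega;T)$ is not weakly closed, the restriction $\lambda<1$ forces the energy to be concave along the constraint, which (via Bauer and Milman) pushes the relaxed minimizer back onto the non-convex set $T$. For convex $T$ the strong-convexity Cauchy argument is self-contained and needs no compactness; a fully general closed $T$ outside these two structural cases would require additional hypotheses on the behaviour of $T$ at infinity and is not needed for the target sets of this paper.
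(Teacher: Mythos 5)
Your proof is correct, and it takes a genuinely different — and in one case more careful — route than the paper. The paper runs the bare direct method for an arbitrary closed $T$: a minimizing sequence is bounded by Lemma \ref{lem:function}(vi), a subsequence converges weakly to some $u^\star$, and the proof concludes by invoking ``continuity of $E_{\lambda,\tau}$ with respect to the weak $L^2$ topology,'' citing Lemma \ref{lem:function}(ii). As you anticipated, that argument is too quick on exactly the two points you isolate: Lemma \ref{lem:function}(ii) gives only \emph{strong} continuity (what the direct method actually needs is weak lower semicontinuity, which does follow from convexity (v) together with (ii)); and, more seriously, the paper silently assumes the weak limit lies in $L^2(\Omega;T)$, which holds only when $T$ is convex — for $T\subset\mathbb S^{k-1}$ oscillating $T$-valued sequences can converge weakly out of the constraint set. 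Your two-case argument repairs both points: for convex $T$, the strong-convexity midpoint estimate makes the minimizing sequence Cauchy, so the weak topology never enters and you get uniqueness for free (the paper only obtains uniqueness later, inside the convergence theorem of Section \ref{s:Tconvex}); for $T\subset\mathbb S^{k-1}$, the constancy of $\|u\|^2$ on the constraint set turns $E_{\lambda,\tau}$ into a weakly continuous concave functional, and Bauer's minimum principle plus Milman's theorem pushes a minimizer over the weakly compact convex set $L^2(\Omega;\overline{\mathrm{conv}}\,T)$ back onto $L^2(\Omega;T)$. This parallels the extremal-point remark the paper makes in Section \ref{s:TargetSetSphere} when deriving the algorithm, but the paper never deploys it for existence, so your sphere argument is, in effect, the only complete existence proof for that case. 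The trade-off: the paper's proof is short and nominally covers every closed $T$, but as written it is only complete for convex $T$; yours covers just the two structural cases, but those are precisely the ones used anywhere in the paper (and the theorem in its full stated generality is not established by either argument, as you note).

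Two small points to tighten. First, your claim that extreme points of $L^2(\Omega;\overline{\mathrm{conv}}\,T)$ take values a.e.\ in $\mathrm{ext}(\overline{\mathrm{conv}}\,T)$ is standard but not free: the forward direction requires a measurable selection argument, so cite it or sketch it. Second, for Bauer's principle you should say explicitly that $L^2(\Omega;\overline{\mathrm{conv}}\,T)$ is weakly compact (bounded, convex, strongly closed in a reflexive space) and that weak sequential continuity of $\Psi$ upgrades to weak continuity on this set because bounded sets in the separable space $L^2(\Omega;\mathbb R^k)$ are weakly metrizable; with those two sentences the argument is airtight.
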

\begin{proof}
We use the direct method in the calculus of variations to establish existence. By Lemma \ref{lem:function}(i), the functional is non-negative on $ L^2(\Omega; T)$. Let $(u_j)_{j \in \mathbb N} \subset L^\infty(\Omega; T)$ be a minimizing sequence, \ie, $E_{\lambda,\tau}(u_j) \to E_{\lambda,\tau}^\star = \inf_{u \in L^2(\Omega;T)} E_{\lambda,\tau}(u)$ as $j\to \infty$. 
By Lemma \ref{lem:function}(vi), the minimizing sequence is bounded in $L^{2}(\Omega; T)$. 
By, \eg, \cite[Thm 1.1.2]{Evans_1990}, there exists a subsequence, which we continue to denote by $(u_j)_{j \in \mathbb N}$ and $u^\star \in L^{2}(\Omega; T)$, such that $ u_j \wto u^\star$. By the continuity of $E_{\lambda,\tau}$ with respect to the weak $L^2(\Omega; T)$ topology (see Lemma \ref{lem:function}(ii)), 
$$
E_{\lambda,\tau}^\star = \lim_{j \to \infty} E_{\lambda,\tau}(u_j) = E_{\lambda,\tau}( u^\star),
$$
which shows that $u^\star$ attains the infimum. 
\end{proof}

\subsection{Fourier interpretation of the relaxed energy}
In this section, we use the Fourier transform to describe the sense in which the minimizer in \eqref{e:relax2} achieves a balance between smoothness and fidelity to the data, $f$.  
Recall that the solution to the diffusion equation with initial data, $u(x,t) = u_0(x)$ can be expressed using the Fourier transform,
$ u(x,t) = (2 \pi)^{-d/2} \int_{\mathbb R^d} e^{- \tau |k|^2 } \hat u_0(k) e^{ik\cdot x} \ dk$, where $\hat u_0(k) = (2 \pi)^{-d/2} \int_{\mathbb R^d} u_0(y) e^{- i k\cdot y} \ dy$. It follows that the energy in \eqref{e:EnergyLambda} can be written
$$
E_{\lambda,\tau}(u) =   \int_{\mathbb R^d} \ \frac{1}{2}   \left(1-e^{-\tau |k|^2}\right) |  \hat u(k)  |^2 + \frac{\lambda}{2} e^{- |k|^2 \tau}   |  \hat u_0(k) - \hat f(k) |^2 \ dk. 
$$
This transformation shows that any test function $u$ with small energy has the following two properties:
\begin{itemize}
\item The Fourier transform of $u$, given by $\hat u(k) $, should be small for $|k| \gg \tau^{-1/2}$. 
\item The Fourier transform of the residual, $u-f$, should be small for $|k| \ll \tau^{-1/2}$. 
\end{itemize}

\subsection{Perimeter interpretation of the relaxed energy}
In this section, for convenience, we discuss the target set $T = \{0,1\}$, which can be obtained by shifting and rescaling  $T= \{\pm 1\}$. 
In \cite{esedoglu2015threshold}, Esedoglu and Otto use $k$ indicator functions $\{u_i\}_{i=1}^k$ of $k$ domains $\{D_i\}_{i=1}^k$ to implicitly represent each domain and the interfaces, $\gamma_{ij}$, between $D_i$ and $D_j$. When $\tau \ll 1$, the  area of $\gamma_{ij}$ can be approximated by 
\begin{equation}\label{e:3.1}
|\gamma_{ij}|\approx \frac{1}{\sqrt{\tau}}\int u_1 G_{\tau}*u_2 \ dx,
\qquad \qquad \textrm{where} \ \ 
G_{\tau}( x)=\frac{1}{(4\pi \tau)^{d/2}} \exp\left(-\frac{| x|^2}{4\tau}\right)
\end{equation}
is the Gaussian kernel; see also \cite{alberti1998non,miranda2007short}. Up  to a constant, this is equivalent to the regularity term in \eqref{eq:DirHeat}.
The expression in \eqref{e:3.1} was shown to $\Gamma$-converge to the area of $\gamma_{ij}$ when $\tau \searrow 0$ in \cite{alberti1998non,miranda2007short,esedoglu2015threshold}.  In \cite{esedoglu2015threshold}, based on this approximation, Esedoglu and Otto successfully generalized the original MBO method to a general threshold dynamics method to model the multiphase mean curvature flow by using a relaxation and linearization procedure. This procedure provides a proof of unconditional stability and consistency of the algorithm. In \cite{laux2016convergence}, the algorithm was rigorously proved to converge to multiphase mean curvature flow with an angle constraint at the multiple junction when $\tau \searrow 0$. A convergence proof for $T=\mathbb S^1$ is given in \cite{laux2017convergence}.

\section{Derivation and properties of the diffusion generated algorithm} \label{s:alg}

In the following two subsections, we separately derive a diffusion generated method when the target set is (i) a convex set or (ii) a closed subset of the unit sphere. Both derivations lead to Algorithm \ref{a:alg1}. In Section~\ref{s:EnergySplit}, we give an energy splitting interpretation of Algorithm~\ref{a:alg1}.

\subsection{The target set is a closed convex set} \label{s:Tconvex}
When the target set, $T$, is convex, we directly use the gradient projection algorithm (see, \eg, \cite{Bertsekas2015}) for a fixed time step size, $1$, to find the solution of \eqref{e:relax2}. That is, if  $u_n$ is the solution at the $n$-th iteration, we define the $n+1$-th iteration, $u_{n+1}$, by 
\[u_{n+1} = \tilde\Pi_T\left(u_n - \frac{\delta E_{\lambda,\tau}(u_n)}{\delta u}\right), \]
where $\tilde\Pi_T(v)$ is the convex projection of $v\in L^2(\Omega; \mathbb{R}^k)$ into $L^2(\Omega; T)$, i.e., 
$$
\tilde\Pi_T(v) = \arg\min_{u \in L^2(\Omega; T)} \| u - v \|^2.
$$
Here, 
\[\frac{\delta E_{\lambda,\tau}(u_n)}{\delta u} = -   \left( e^{\Delta \tau} - I \right) u_n  + \lambda e^{\Delta \tau} (u_n-f) \]
is the variation of $E_{\lambda,\tau}(u)$ with respect to $u$ at $u=u_n$; see Lemma \ref{lem:function}(iii).
Direct calculation gives
$$
u_{n+1} = \tilde\Pi_T \left( e^{\Delta \tau} ((1-\lambda) u_n +\lambda f)\right). 
$$
Here, we take a convex combination of the data and current iterate, solve the diffusion equation until time $\tau$, and project into the target set, $T$.
Since $e^{\Delta \tau} ((1-\lambda) u_n +\lambda f$ is a $C^\infty(\Omega; \mathbb R^k)$ function, the projection step is equivalent to 
\begin{equation}\label{eq:iteration}
u_{n+1} = \Pi_T \left( e^{\Delta \tau} ((1-\lambda) u_n +\lambda f)\right),
\end{equation}
where $\Pi_T(v)$ is the point-wise convex projection of $v\in \mathbb{R}^k$ into the target set $T$. 
The algorithm is summarized in Algorithm \ref{a:alg1}. 

The following theorem gives a convergence result for Algorithm \ref{a:alg1} for $T$ convex. 
\begin{thm}
Let $T$ be a closed and convex set and assume $\tau > 0$, $\lambda \in (0,1)$, and $f \in L^\infty(\Omega; \mathbb R^k)$. The sequence generated by \eqref{eq:iteration} for any initial condition $u_0 \in L^2(\Omega; \mathbb R^k)$
strongly converges in $L^2(\Omega; T)$ to the unique minimum of \eqref{e:relax2}, \ie, $\|u_n - u^\star\| \to 0$. Furthermore, the sequence converges at a geometric rate $1-\lambda$, \ie,
\[\|u_{n} - u^\star \| \leq (1-\lambda)^n \|u_{0} - u^\star \|,  \qquad \qquad  \forall n \in \mathbb{N}. \]
\end{thm}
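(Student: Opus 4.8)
The plan is to recognize the iteration \eqref{eq:iteration} as a Banach fixed-point iteration for a contraction and to identify its fixed point with the unique minimizer of $E_{\lambda,\tau}$. I would work in the Hilbert space $H = L^2(\Omega;\mathbb R^k)$ and set $C = L^2(\Omega;T)$, which is closed and convex because $T$ is closed and convex. Define the map $G(u) = u - \frac{\delta E_{\lambda,\tau}(u)}{\delta u}$. Using the formula for the first variation in Lemma \ref{lem:function}(iii), a direct computation gives
\[ G(u) = e^{\Delta\tau}\big((1-\lambda) u + \lambda f\big), \]
so that \eqref{eq:iteration} reads $u_{n+1} = \tilde\Pi_T(G(u_n)) = (\tilde\Pi_T \circ G)(u_n)$.

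The key step is to show that $\tilde\Pi_T \circ G$ is a contraction with factor $1-\lambda$. For the inner map, linearity gives $G(u) - G(v) = (1-\lambda)\, e^{\Delta\tau}(u-v)$; since the heat semigroup satisfies $\|e^{\Delta\tau} w\| \le \|w\|$ (the bound already used in the proof of Lemma \ref{lem:function}), we obtain $\|G(u) - G(v)\| \le (1-\lambda)\|u-v\|$. The projection $\tilde\Pi_T$ onto the closed convex set $C$ is nonexpansive, so the composition satisfies
\[ \|(\tilde\Pi_T \circ G)(u) - (\tilde\Pi_T \circ G)(v)\| \le (1-\lambda)\|u-v\|, \qquad u,v \in H. \]
Because $\lambda \in (0,1)$, the factor $1-\lambda$ lies in $(0,1)$, so by the Banach fixed-point theorem $\tilde\Pi_T \circ G$ has a unique fixed point $u^\star \in C$, and iterating the contraction estimate yields $\|u_n - u^\star\| \le (1-\lambda)^n \|u_0 - u^\star\|$ for every $n$, which gives both strong convergence and the claimed geometric rate. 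Note the bound holds even when $u_0 \notin C$, since $u^\star$ is itself a fixed point and the contraction estimate applies to any two points of $H$.

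It remains to identify the fixed point $u^\star$ with the minimizer of \eqref{e:relax2}. Since $E_{\lambda,\tau}$ is convex (Lemma \ref{lem:function}(v)) and differentiable on $H$ and $C$ is closed convex, the standard optimality theory for the gradient projection method (see, \eg, \cite{Bertsekas2015}) shows that $\bar u \in C$ minimizes $E_{\lambda,\tau}$ over $C$ if and only if it satisfies the variational inequality $\langle \frac{\delta E_{\lambda,\tau}(\bar u)}{\delta u}, u - \bar u\rangle \ge 0$ for all $u \in C$, which in turn is equivalent to the fixed-point equation $\bar u = \tilde\Pi_T(G(\bar u))$. Hence $u^\star$ is a minimizer; strong convexity with constant $\lambda$ (Lemma \ref{lem:function}(v)) guarantees it is the unique one, and existence is already provided by Theorem \ref{prop:Existence}. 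The one technical point to justify carefully is this last equivalence between the projected-gradient fixed point and the constrained minimizer, since it is the only place where convexity of the energy (as opposed to the mere contraction property of the map) enters; everything else reduces to the semigroup contraction bound and the nonexpansiveness of the projection.
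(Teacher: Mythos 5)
Your proof is correct, and its decisive estimate is the same one the paper ultimately relies on: writing the iteration as $u_{n+1} = \tilde\Pi_T(G(u_n))$ with $G(u) = e^{\Delta\tau}\big((1-\lambda)u + \lambda f\big)$, combining nonexpansiveness of the convex projection with the semigroup bound $\|e^{\Delta\tau}w\| \le \|w\|$ to get the factor $1-\lambda$, and identifying the minimizer with the fixed point through the variational inequality (the paper cites \cite{Ekeland_1999} for exactly this equivalence). The difference is in how these ingredients are assembled, and your assembly is genuinely leaner. The paper takes existence of a minimizer $u^\star$ from Theorem \ref{prop:Existence}, shows it is a fixed point, and then runs a generic gradient-projection convergence analysis --- a co-coercivity inequality for the first variation, telescoping sums, a $\limsup$/$\liminf$ argument, weak convergence of the iterates --- before applying the contraction estimate only at the very end to upgrade to strong convergence with the geometric rate. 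Your Banach fixed-point packaging makes that entire intermediate analysis unnecessary: the contraction property alone yields existence and uniqueness of the fixed point together with the rate, and convexity of $E_{\lambda,\tau}$ enters in the single place you flagged, namely the equivalence between the fixed-point equation and the constrained first-order optimality condition. As a byproduct you do not even need Theorem \ref{prop:Existence} as an input --- existence of the minimizer follows from existence of the fixed point, and uniqueness follows either from uniqueness of the fixed point or from strong convexity (Lemma \ref{lem:function}(v)). Your observation that the estimate applies to arbitrary $u_0 \in L^2(\Omega;\mathbb R^k)$, not only $u_0 \in L^2(\Omega;T)$, is also worth keeping, since the theorem is stated for general initial data and the contraction argument is what justifies that generality.
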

\begin{proof}
By Theorem~\ref{prop:Existence}, there exists an optimal solution $u^\star \in L^2(\Omega; T)$ to \eqref{e:relax2}, necessarily satisfying
$$
\left \langle \left. \frac{\delta E_{\lambda,\tau}(u)}{\delta u}\right|_{u=u^\star}, v - u^\star \right \rangle \geq 0, \qquad \qquad \forall v \in L^2(\Omega; T).
$$
The uniqueness of this solution follows from the convexity of $L^2(\Omega; T)$ and strong convexity of $E_{\lambda,\tau}$; see Lemma \ref{lem:function}(v). 
Adding and subtracting $u^\star$, we can rewrite this as
$$
\left \langle u^\star - \left( u^\star -  \left. \frac{\delta E_{\lambda,\tau}(u)}{\delta u}\right|_{u=u^\star} \right), v - u^\star \right \rangle \geq 0, \qquad \qquad \forall v \in L^2(\Omega; T).
$$
By \cite[p.40]{Ekeland_1999}, we conclude that
\[u^\star = \Pi_T\left(u^\star- \left. \frac{\delta E_{\lambda,\tau}(u)}{\delta u}\right|_{u=u^\star}\right), 
\qquad \qquad \textrm{a.e.} \ \ x\in \Omega.
\]

Using the definition of the method in \eqref{eq:iteration}, we obtain 
\[\|u_{n+1}-u^\star\|^2 = \left\|\Pi_T\left(u_{n}-\left.\frac{\delta E_{\lambda,\tau}(u)}{\delta u}\right|_{u=u_{n}} \right)- \Pi_T\left(u^\star-\left.\frac{\delta E_{\lambda,\tau}(u)}{\delta u}\right|_{u=u^\star}\right) \right\|^2. 
\]
By the non-expansiveness of the convex projection, it follows that
{\footnotesize
\begin{align} 
\label{ineq:convproof1} 
&  \|u_{n+1}-u^\star\|^2  \\
& \leq \left\|u_{n}-\left.\frac{\delta E_{\lambda,\tau}(u)}{\delta u}\right|_{u=u_{n}} - u^\star+\left.\frac{\delta E_{\lambda,\tau}(u)}{\delta u}\right|_{u=u^\star} \right\|^2 \nonumber \\
& =  \left\|u_{n}-u^\star \right\|^2 - 2 \left\langle u_n-u^\star, \left.\frac{\delta E_{\lambda,\tau}(u)}{\delta u}\right|_{u=u_{n}} - \left.\frac{\delta E_{\lambda,\tau}(u)}{\delta u}\right|_{u=u^\star}\right\rangle 
+ \left \| \left.\frac{\delta E_{\lambda,\tau}(u)}{\delta u}\right|_{u=u_{n}} - \left.\frac{\delta E_{\lambda,\tau}(u)}{\delta u}\right|_{u=u^\star} \right\|^2  \nonumber
\end{align}
}

Now adding the two inequalities 
\begin{align*}
E_{\lambda, \tau}(u) &\geq E_{\lambda, \tau}(u^\star) + \left\langle \left.\frac{\delta E_{\lambda,\tau}(u)}{\delta u}\right|_{u=u^\star}, u-u^\star \right \rangle
+ \frac{L}{2} \| u-u^\star\|^2 \\
E_{\lambda, \tau}(u^\star) &\geq E_{\lambda, \tau}(u) + \left\langle \left.\frac{\delta E_{\lambda,\tau}(u)}{\delta u}\right|_{u=u}, u^\star-u \right \rangle
+ \frac{L}{2} \| u-u^\star\|^2
\end{align*} 
we obtain 
$$
\left\langle \left.\frac{\delta E_{\lambda,\tau}(u)}{\delta u}\right|_{u=u} - \left.\frac{\delta E_{\lambda,\tau}(u)}{\delta u}\right|_{u=u^\star} , u-u^\star \right \rangle
\geq L \| u - u^\star \|^2. 
$$
Combining this with the inequality 
$$
\left \| \left.\frac{\delta E_{\lambda,\tau}(u)}{\delta u}\right|_{u=u} - \left.\frac{\delta E_{\lambda,\tau}(u)}{\delta u}\right|_{u=u^\star} \right\|
\leq L \| u - u^\star\|, 
$$
where $L$ is the Lipschitz constant computed in Lemma \ref{lem:function}(iv), 
we obtain the inequality
$$
\left\langle \left.\frac{\delta E_{\lambda,\tau}(u)}{\delta u}\right|_{u=u} - \left.\frac{\delta E_{\lambda,\tau}(u)}{\delta u}\right|_{u=u^\star} , u-u^\star \right \rangle
\geq 
\frac{1}{L} \left \| \left.\frac{\delta E_{\lambda,\tau}(u)}{\delta u}\right|_{u=u} - \left.\frac{\delta E_{\lambda,\tau}(u)}{\delta u}\right|_{u=u^\star} \right\|^2. 
$$
Using this inequality in \eqref{ineq:convproof1}, we obtain 
\begin{equation} \label{ineq:convproof2}
 \left( \frac{2}{L} - 1 \right) \left \| \left.\frac{\delta E_{\lambda,\tau}(u)}{\delta u}\right|_{u=u_{n}} - \left.\frac{\delta E_{\lambda,\tau}(u)}{\delta u}\right|_{u=u^\star} \right\|^2 \leq \left\|u_{n}-u^\star \right\|^2 -\|u_{n+1}-u^\star\|^2.
 \end{equation}
On one hand, by summing \eqref{ineq:convproof2} from $n=0$ to $N$ and letting $N \to \infty$, we obtain
\[ \left( \frac{2}{L} - 1 \right)  \sum_{n=0}^\infty\left \| \left.\frac{\delta E_{\lambda,\tau}(u)}{\delta u}\right|_{u=u_{n}} - \left.\frac{\delta E_{\lambda,\tau}(u)}{\delta u}\right|_{u=u^\star} \right\|^2 \leq \left\|u_{0}-u^\star \right\|^2 -\|u_{\infty}-u^\star\|^2 \leq  \left\|u_{0}-u^\star \right\|^2 < \infty.\]
We conclude that 
\[\lim_{n\to \infty} \left.\frac{\delta E_{\lambda,\tau}(u)}{\delta u}\right|_{u=u_{n}} = \left.\frac{\delta E_{\lambda,\tau}(u)}{\delta u}\right|_{u=u^\star} . \]
On the other hand, from \eqref{ineq:convproof2},  we also have 
\[\|u_{n+1}-u^\star\|^2  \leq  \left\|u_{n}-u^\star \right\|^2 - \left(  \frac{2}{L} -1 \right)  \left \| \left.\frac{\delta E_{\lambda,\tau}(u)}{\delta u}\right|_{u=u_{n}} - \left.\frac{\delta E_{\lambda,\tau}(u)}{\delta u}\right|_{u=u^\star} \right\|^2 .
\]
By summing these relations over $n  = M, . . . , N$ for arbitrary $M$ and $N$ with
$M < N$, taking the $\limsup$ as $N \to \infty$, and taking the $\liminf$ as $M \to \infty$, we obtain 
{\small
\begin{align*}
\limsup_{N\to \infty} \|u_{N+1}-u^\star\|^2 
& \leq \liminf_{M \to \infty}\|u_{M}-u^\star\|^2 -  \left(\frac{2}{L} -1 \right)\liminf_{M \to \infty} \sum_{n=M}^\infty \left \| \left.\frac{\delta E_{\lambda,\tau}(u)}{\delta u}\right|_{u=u_{n}} - \left.\frac{\delta E_{\lambda,\tau}(u)}{\delta u}\right|_{u=u^\star} \right\|^2 \\
& = \liminf_{M \to \infty}\|u_{M}-u^\star\|^2.
\end{align*} 
}
Hence, we are led to the conclusion that the  sequence $\{\|u_n-u^\star\|\}$ is convergent hence bounded, implying that 
$\{\|u_n\|\}$ is also bounded. Thus, $\{ u_n\}_{n \in \mathbb N}$ weakly converges in $L^2(\Omega; T)$ to a $\tilde u \in L^2(\Omega; T)$,  \ie,
$ \lim_{n\to \infty} \langle u_n- \tilde u, v \rangle = 0, \ \forall v \in L^2(\Omega; T)$. To see that $\tilde u = u^\star$, prove strong convergence, and obtain the convergence rate, we use the non-expansiveness of the projection to obtain 
\begin{align*}
\|u_{n+1}-u^\star\| 
& = \left\|\Pi_T\left(u_{n}-\left.\frac{\delta E_{\lambda,\tau}(u)}{\delta u}\right|_{u=u_{n}} \right)- \Pi_T\left(u^\star-\left.\frac{\delta E_{\lambda,\tau}(u)}{\delta u}\right|_{u=u^\star}\right) \right\|  \\
 &\leq \left\|u_{n}-\left.\frac{\delta E_{\lambda,\tau}(u)}{\delta u}\right|_{u=u_{n}} - u^\star+\left.\frac{\delta E_{\lambda,\tau}(u)}{\delta u}\right|_{u=u^\star} \right\| \\
 & = \left\|\left( 1- \lambda \right) e^{\Delta \tau} (u_n-u^\star) \right\| \\
 &\leq \left( 1- \lambda \right)  \|u_n-u^\star\|.  
 \end{align*}
The desired statement then follows. 
\end{proof}

\subsection{The target set is a closed subset of the sphere} \label{s:TargetSetSphere}
We consider a target set, $T$, satisfying the following properties:
\begin{enumerate}
\item $T$ is a closed subset of the sphere,  $\mathbb S^{k-1}$, \ie, $T\subseteq \{ x \colon | x | = 1 \} \subset \mathbb R^k. $
\item There exists a measure zero set, $\mathcal N \subset \mathbb R^k$, such that for every point in $\mathbb R^k \setminus \mathcal N$, we can define the closest point map, $\Pi_T
\colon \mathbb R^k \setminus \mathcal N \to T$, which takes points to their closest point in $T$, 
$$
\Pi_T x = \arg \min_{y \in T}  \ | x - y |^2. 
$$
\item We define the closed convex set $\mathcal B = \textrm{conv}(T)$ to be the convex hull of $T$. 
\end{enumerate}

\subsubsection*{Example} For  $T = \mathbb S^{k-1}$, we define $\mathcal N = \{0\}$, $
\Pi_T x = \frac{x}{|x|}$, and $\mathcal B = \{ x \colon |x | \leq 1 \}$. 

\subsubsection*{Example}For $T = O(n) \subset \mathbb R^{n\times n}$, $\mathcal N $ is the set of singular $n\times n$ matrices, and
$$
\Pi_T A = U V^t, 
$$
where $A = U\Sigma V^t$ is the singular value decomposition of $A$. 
We have 
$\mathcal B = \{ A \in \mathbb R^{n\times n} \colon \| A \|_s \leq 1 \}$ where $\|\cdot \|_s$ is the spectral norm. See \cite{OnValuedFields} for further details. 

\bigskip

Since $\Omega \subset \mathbb R^d$ is compact, $L^\infty(\Omega; T) \subset L^2(\Omega; T)$. If $T \subset \mathbb R^k$ is compact, then the following Lemma shows that the converse holds.
\begin{lem}
If $T \subset \mathbb R^k$ is compact, then $L^p(\Omega; T) \subset L^\infty(\Omega; T)$ for every $p \geq 1$. 
\end{lem}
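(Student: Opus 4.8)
The plan is to unwind the definition of $L^p(\Omega;T)$ and observe that the only property of $T$ being used is its boundedness, which is an immediate consequence of compactness. Recall from the definition just given in the text that a function $u$ belongs to $L^p(\Omega;T)$ precisely when $u \in L^p(\Omega;\mathbb R^k)$ and the image of $u$ lies essentially in $T$, \ie\ $u(x) \in T$ for almost every $x \in \Omega$. The target space $L^\infty(\Omega;T)$ is defined analogously, so the content of the statement is simply that membership in $L^p(\Omega;T)$ forces the (essential) boundedness $u \in L^\infty$.

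First I would use compactness of $T$ to extract the uniform bound. Since $T \subset \mathbb R^k$ is compact, it is bounded, so
$$
M := \sup_{y \in T} |y| < \infty.
$$
Next I would apply this bound pointwise almost everywhere. Fix $u \in L^p(\Omega;T)$. By definition there is a null set $N \subset \Omega$ with $u(x) \in T$ for all $x \in \Omega \setminus N$, and for such $x$ we have $|u(x)| \le M$. Hence $\|u\|_{L^\infty(\Omega;\mathbb R^k)} \le M < \infty$, so $u \in L^\infty(\Omega;\mathbb R^k)$. Finally, because the image of $u$ is already essentially contained in $T$ by hypothesis, the same function witnesses $u \in L^\infty(\Omega;T)$, which establishes the inclusion. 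Since $u$ was arbitrary and $p \ge 1$ played no role beyond measurability, the argument is uniform in $p$.

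There is no genuine obstacle here: the statement is essentially a restatement of the elementary fact that a measurable map whose values lie a.e.\ in a bounded set is essentially bounded, and compactness is invoked only through boundedness. The one point worth stating carefully is the distinction between ``image in $T$'' holding pointwise versus almost everywhere, which is why the null set $N$ is introduced; this also explains why the $L^p$ integrability hypothesis is not actually needed for the conclusion (it is inherited automatically once $\Omega$ has finite measure and $u$ is essentially bounded). I would therefore keep the proof to a few lines.
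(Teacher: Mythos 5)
Your proof is correct, and it rests on the same single idea as the paper's: compactness of $T$ gives a uniform bound $M$, and any function with values essentially in $T$ therefore has essential supremum at most $M$. The difference is in how each of you unwinds the phrase ``image essentially in $T$.'' You read it as the pointwise statement $u(x) \in T$ for a.e.\ $x \in \Omega$, after which the conclusion is immediate. The paper instead formalizes it via the essential image: $\textrm{ess.im}(u) \subset T$ means that every $z \in \mathbb R^k \setminus T$ admits an $\varepsilon > 0$ with $\mu\left( \{ x \in \Omega \colon |u(x) - z| \leq \varepsilon \} \right) = 0$, and from this it deduces $\mu\left( \{ x \in \Omega \colon |u(x)| > M \} \right) = 0$ --- a step that, spelled out, requires covering $\{ z \colon |z| > M \} \subset \mathbb R^k \setminus T$ by countably many such balls, using that $\mathbb R^k$ is second countable. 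For $T$ compact, hence closed, the two readings of the definition coincide, and the passage from the paper's reading to yours is exactly this covering argument; so your shorter route is legitimate, but it quietly absorbs that equivalence into the definition rather than proving it. If the intended definition is the essential-image one, your proof needs one additional sentence invoking the standard fact that a measurable function takes values a.e.\ in its essential image when that set is closed --- which is precisely what the paper's ``in particular'' step is doing.
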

\begin{proof}
If $T$ is compact, then there exists $M>0$ such that $|z| \leq M$ for every $z \in T$. Let $f \in L^p(\Omega; T)$ for $p \geq 1$.  Then $\textrm{ess.im}(f) \subset T$ implies that for all $z \in \mathbb R^k\setminus T$, there exists $\varepsilon >0$ such that 
$$
\mu \left( \{ x \in \Omega \colon |f(x) - z| \leq \varepsilon \} \right) = 0.  
$$
In particular, we have that $ \mu \left( \{ x \in \Omega \colon |f(x)| > M \} \right) = 0$ which shows that
$$
\inf \{ a \in \mathbb R \colon  \mu \left( \{ x \in \Omega \colon |f(x)| > a \} \right) = 0 \} \leq M, 
$$
which implies that $f \in L^\infty(\Omega; T)$.
\end{proof}

\bigskip

Since $\langle u, u \rangle = 1$, \eqref{e:EnergyLambda} can be rewritten 
\begin{align*}
E_{\lambda,\tau}(u) 
& =  \frac{1}{2} -  \frac{1}{2} \langle u, e^{\Delta \tau}u \rangle + \frac{\lambda}{2 }\langle u- f, e^{\Delta \tau} (u-f) \rangle \\
& = - \frac{1}{2} (1-\lambda) \langle u, e^{\Delta \tau}u \rangle 
- \lambda \langle u , e^{\Delta \tau}f \rangle 
+  \left( \frac{1}{2} + \frac{\lambda}{2 }\langle f, e^{\Delta \tau} f \rangle \right).
\end{align*}
Ignoring the constant term and multiplying by $-1$, the relaxed problem \eqref{eq:relaxed} becomes
\begin{subequations}
\begin{equation}\label{eq:maxsphere}
\max_{u \in L^\infty(\Omega; T)}  \  \tilde{E}_{\lambda,\tau}(u), 
\end{equation}
where
\begin{equation} \label{e:S2Energy}
\tilde{E}_{\lambda,\tau}(u) := \frac{1}{2} (1-\lambda) \langle u, e^{\Delta \tau} u \rangle + \lambda \langle u,e^{\Delta \tau} f \rangle.
\end{equation}
\end{subequations}
The existence to a solution in \eqref{eq:maxsphere} follows from Theorem~\ref{prop:Existence}.

The following Lemma follows from calculations similar to as in the proof of Lemma \ref{lem:function}.
\begin{lem} \label{lem:function2}
Assume $\lambda \in (0,1)$, $\tau > 0$, and $f \in L^\infty(\Omega;\mathbb R^k)$. Then the following properties hold for the functional $\tilde E_{\lambda,\tau}$ defined in \eqref{e:S2Energy}. 
\begin{enumerate}
\item[(i)] The first variation of $\tilde E_{\lambda,\tau} \colon L^2(\Omega; \mathbb R^k) \to \mathbb R$ with respect to $u$ is 
\begin{equation} \label{e:L}
L_{\lambda,\tau}^{u}(v)  :=  \left\langle v,  \frac{\delta \tilde E_{\lambda,\tau}(u)}{\delta u}  \right\rangle =  \langle v, (1-\lambda) e^{\Delta \tau} u + \lambda e^{\Delta \tau} f  \rangle . 
\end{equation}
\item[(ii)] $\tilde E_{\lambda,\tau}(u)$ is  convex on $L^2(\Omega;\mathbb R^k)$.
\end{enumerate}
\end{lem}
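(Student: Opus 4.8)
The plan is to mirror the computations in the proof of Lemma~\ref{lem:function}, exploiting two elementary facts about the free-space heat semigroup: that $e^{\Delta \tau}$ is self-adjoint on $L^2(\Omega; \mathbb R^k)$ (the heat kernel $e^{-|x-y|^2/4\tau}$ is symmetric in $x$ and $y$), and that it factors via the semigroup property $e^{\Delta \tau} = e^{\Delta \tau/2} e^{\Delta \tau/2}$, so that $\langle w, e^{\Delta \tau} w\rangle = \| e^{\Delta \tau/2} w\|^2 \geq 0$ for every $w \in L^2(\Omega;\mathbb R^k)$. These are precisely the identities already used in Lemma~\ref{lem:function}(i),(iii),(v).

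For part (i), I would expand $\tilde E_{\lambda,\tau}(u + s\,\delta u)$ directly, collect the terms linear in $s$, and differentiate at $s=0$, exactly as in Lemma~\ref{lem:function}(iii). The only point to track is the pair of cross terms produced by the quadratic piece $\frac{1}{2}(1-\lambda)\langle u, e^{\Delta\tau}u\rangle$: self-adjointness gives $\langle \delta u, e^{\Delta\tau} u\rangle = \langle u, e^{\Delta \tau} \delta u\rangle$, so these combine into $2\langle \delta u, e^{\Delta \tau} u\rangle$. Adding the contribution $\lambda\langle \delta u, e^{\Delta\tau}f\rangle$ from the linear piece yields $\langle \delta u,\ (1-\lambda) e^{\Delta\tau}u + \lambda e^{\Delta\tau}f\rangle$, which is exactly \eqref{e:L}.

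For part (ii), I would evaluate the convexity defect $\gamma \tilde E_{\lambda,\tau}(u) + (1-\gamma)\tilde E_{\lambda,\tau}(v) - \tilde E_{\lambda,\tau}(\gamma u + (1-\gamma) v)$ for $\gamma \in (0,1)$, again following Lemma~\ref{lem:function}(v). The term $\lambda\langle u, e^{\Delta\tau}f\rangle$ is affine in $u$ and so contributes nothing, while the quadratic form $\frac{1}{2}(1-\lambda)\langle u, e^{\Delta\tau}u\rangle$ produces the standard remainder $\gamma(1-\gamma)\cdot \frac{1}{2}(1-\lambda)\langle u-v, e^{\Delta\tau}(u-v)\rangle$. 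Since $\lambda \in (0,1)$ gives $1-\lambda > 0$, and $\langle u-v, e^{\Delta\tau}(u-v)\rangle = \|e^{\Delta \tau/2}(u-v)\|^2 \geq 0$, this defect is non-negative, establishing convexity. Note that, unlike $E_{\lambda,\tau}$, no positive multiple of $\|u\|^2$ appears, so only plain convexity is obtained, consistent with the statement.

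There is no genuine obstacle here: the whole argument is a routine variational calculation. The only step requiring any care—the same one used repeatedly in Lemma~\ref{lem:function}—is consistently invoking self-adjointness and the semigroup factorization of $e^{\Delta\tau}$ so that the relevant quadratic form is manifestly a squared $L^2$-norm; once that is in place, both claims follow immediately.
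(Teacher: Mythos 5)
Your proposal is correct and follows exactly the route the paper intends: the paper gives no separate argument for Lemma~\ref{lem:function2}, stating only that it ``follows from calculations similar to as in the proof of Lemma~\ref{lem:function},'' and your expansion of $\tilde E_{\lambda,\tau}(u+s\,\delta u)$ for (i) and the convexity-defect computation with the semigroup factorization $\langle u-v, e^{\Delta\tau}(u-v)\rangle = \|e^{\Delta\tau/2}(u-v)\|^2 \geq 0$ for (ii) are precisely those calculations. You also correctly note the one substantive difference from Lemma~\ref{lem:function}(v): the quadratic form here is only positive semi-definite, so one gets plain convexity rather than strong convexity, matching the statement.
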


Since the maximum in \eqref{eq:maxsphere} is attained by an extremal point of $ L^\infty(\Omega; \mathcal B)$, \ie, $ L^\infty(\Omega; T)$, we have the following Lemma.
\begin{lem} 
The optimization problem in \eqref{eq:maxsphere} is equivalent to 
\begin{equation} \label{e:sphereEquiv}
\max_{u \in L^\infty(\Omega; \mathcal B) } \   \tilde{E}_{\lambda,\tau}(u).
\end{equation}
\end{lem}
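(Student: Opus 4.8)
The plan is to read the identity as a statement about maximizing a convex functional over a convex set, and to invoke the fact that such a maximum is carried by extreme points. One inclusion is immediate: since $T \subseteq \mathcal B$ we have $L^\infty(\Omega;T)\subseteq L^\infty(\Omega;\mathcal B)$, so the supremum in \eqref{e:sphereEquiv} dominates the one in \eqref{eq:maxsphere}. All the content is in the reverse inequality, for which I would show that a maximizer of $\tilde E_{\lambda,\tau}$ over the larger, $\mathcal B$-valued class can always be taken to be $T$-valued.

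First I would record the relevant compactness. Since $T$ is a compact subset of $\mathbb S^{k-1}$, its convex hull $\mathcal B$ is compact, and by the preceding Lemma $L^\infty(\Omega;\mathcal B)=L^2(\Omega;\mathcal B)=:K$. The set $K$ is bounded, convex, and closed in $L^2(\Omega;\mathbb R^k)$ (an $L^2$-limit has an a.e.-convergent subsequence and $\mathcal B$ is closed), hence weakly compact. Next I would check that $\tilde E_{\lambda,\tau}$ is weakly continuous on $K$: the term $\lambda\langle u, e^{\Delta\tau}f\rangle$ is a bounded linear functional, and the quadratic term $\tfrac12(1-\lambda)\langle u, e^{\Delta\tau}u\rangle$ is weakly continuous because $e^{\Delta\tau}$ is a Hilbert--Schmidt (hence compact) operator on $L^2(\Omega)$---its kernel $G_\tau(x-y)$ is bounded on the bounded set $\Omega\times\Omega$---so it sends weakly convergent sequences to strongly convergent ones, and $u_j\wto u$ then forces $\langle u_j,e^{\Delta\tau}u_j\rangle\to\langle u,e^{\Delta\tau}u\rangle$. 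Thus $\tilde E_{\lambda,\tau}$ attains its maximum on $K$.

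With existence in hand I would apply Bauer's maximum principle: a convex, weakly upper semicontinuous functional on a compact convex set attains its maximum at an extreme point of that set. Here $\tilde E_{\lambda,\tau}$ is convex by Lemma \ref{lem:function2}(ii) and weakly continuous by the previous step, and $K$ is weakly compact and convex, so the maximum is attained at some extreme point $u^\star$ of $K$. It remains to identify these extreme points. Since $\mathcal B\subseteq\{x \colon |x|\le 1\}$ while every $p\in T$ has $|p|=1$, strict convexity of the ball shows each $p\in T$ is an extreme point of $\mathcal B$; conversely, by Milman's theorem the extreme points of $\mathcal B=\mathrm{conv}(T)$ lie in $T$, so $\mathrm{ext}(\mathcal B)=T$. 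The standard characterization of extreme points of $\mathcal B$-valued function classes then gives that $u$ is extreme in $K$ iff $u(x)\in\mathrm{ext}(\mathcal B)=T$ for a.e.\ $x$, i.e.\ the extreme points of $K$ are exactly $L^\infty(\Omega;T)$. Hence $u^\star\in L^\infty(\Omega;T)$ and $\max_{L^\infty(\Omega;\mathcal B)}\tilde E_{\lambda,\tau}=\tilde E_{\lambda,\tau}(u^\star)\le \max_{L^\infty(\Omega;T)}\tilde E_{\lambda,\tau}$, which together with the trivial inequality proves the two problems are equivalent.

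I expect the main obstacle to be the extreme-point step: both the passage from $\mathrm{ext}(\mathcal B)=T$ to the claim that the extreme points of the function class $K$ are precisely the a.e.-$T$-valued maps, and the legitimacy of reducing the maximization to extreme points in this infinite-dimensional setting, are the places where care is needed. The weak continuity of the quadratic term---resting on compactness of the heat operator on the bounded domain---is the technical ingredient that both makes Bauer's principle applicable and guarantees the maximizer genuinely exists rather than being merely approached.
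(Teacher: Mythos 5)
Your proof is correct and takes essentially the same approach as the paper: the paper justifies this lemma in a single sentence --- the maximum of the convex functional over $L^\infty(\Omega;\mathcal B)$ is attained at an extremal point of that set, i.e., at an element of $L^\infty(\Omega; T)$ --- which is exactly the Bauer/extreme-point argument you develop. Your write-up simply supplies the supporting details the paper leaves implicit (weak compactness of the constraint set, weak continuity of $\tilde E_{\lambda,\tau}$ via compactness of $e^{\Delta \tau}$, Milman's theorem giving $\mathrm{ext}(\mathcal B)=T$, and the characterization of extreme points of the $\mathcal B$-valued function class).
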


The sequential linear programming approach to solving \eqref{e:sphereEquiv}  is to consider a sequence of functions
$\{u_n\}_{n=0}^{\infty}$ which satisfies
\[u_{n+1} = \arg\max_{u\in L^\infty(\Omega; \mathcal B)} L_{\lambda,\tau}^{u_n}(u), \qquad  u_0 \in L^\infty(\Omega; T) \, \textrm{is given}\]
where the linear functional $L_{\lambda,\tau}^{u_n}$  is given in \eqref{e:L}. 
\begin{lem} \label{lem:maxL} 
If $e^{\Delta \tau} ( (1-\lambda) u_n +\lambda  f )(x) \notin \mathcal N$ a.e. $x \in \Omega$, then the 
 maximizer of the linear functional $L_{\alpha,\tau}^{u_n}(u)$ over $L^\infty(\Omega; T)$ is 
\[u_{n+1} =  \Pi_T \left( e^{\Delta \tau} ( (1-\lambda) u_n +\lambda  f ) \right). \] 
\end{lem}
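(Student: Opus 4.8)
The plan is to reduce the maximization of the linear functional to a pointwise problem at each $x \in \Omega$ and then recognize the pointwise maximizer as the closest point map. First I would write the functional explicitly as an integral. Setting $w := e^{\Delta \tau}((1-\lambda) u_n + \lambda f)$, which is smooth (hence continuous and, on the compact set $\Omega$, bounded) by the smoothing property of the heat semigroup, the linearity of $e^{\Delta \tau}$ gives
$$
L_{\lambda,\tau}^{u_n}(u) = \langle u, (1-\lambda) e^{\Delta \tau} u_n + \lambda e^{\Delta \tau} f \rangle = \int_\Omega u(x) \cdot w(x) \ dx.
$$
Because the constraint $u(x) \in T$ is imposed pointwise with no coupling between distinct points, and the integrand depends only on the value $u(x)$, the supremum over $u \in L^\infty(\Omega; T)$ can be obtained by maximizing the integrand pointwise.

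The key observation is an algebraic identity that exploits $T \subset \mathbb S^{k-1}$, so that $|u(x)| = 1$. For fixed $x$, writing $p := w(x)$, I would expand
$$
|y - p|^2 = |y|^2 - 2 \, y \cdot p + |p|^2 = 1 - 2 \, y \cdot p + |p|^2, \qquad y \in T,
$$
so that $y \cdot p = \frac{1}{2}\left( 1 + |p|^2 - |y-p|^2 \right)$. Maximizing $y \cdot p$ over $y \in T$ is therefore equivalent to minimizing $|y - p|^2$ over $y \in T$, whose minimizer is exactly $\Pi_T p$ by the definition of the closest point map, provided $p \notin \mathcal N$. Under the hypothesis $w(x) \notin \mathcal N$ for a.e.\ $x$, this pointwise maximizer is well-defined almost everywhere and equals $\Pi_T(w(x))$.

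Integrating the resulting pointwise inequality $u(x) \cdot w(x) \leq \Pi_T(w(x)) \cdot w(x)$ over $\Omega$ shows that $u_{n+1} := \Pi_T \circ w$ dominates every competitor, \ie\ $L_{\lambda,\tau}^{u_n}(u) \leq L_{\lambda,\tau}^{u_n}(u_{n+1})$ for all $u \in L^\infty(\Omega; T)$, which is the claim. The main obstacle is the measurable selection step: one must verify that $u_{n+1}$ is itself an admissible competitor, \ie\ that it lies in $L^\infty(\Omega; T)$. Boundedness is immediate since $\Pi_T$ takes values in $T \subset \mathbb S^{k-1}$, so $|u_{n+1}| = 1$ a.e. For measurability I would use that $w$ is continuous and that $\Pi_T$ is a Borel map defined on $\mathbb R^k \setminus \mathcal N$, together with $w(x) \notin \mathcal N$ a.e., so that $x \mapsto \Pi_T(w(x))$ is measurable as an almost-everywhere-defined composition. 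The only genuine care needed is confirming the single-valuedness and measurability of $\Pi_T$ off the exceptional set $\mathcal N$, which is precisely what the hypotheses on $T$ in Section~\ref{s:TargetSetSphere} guarantee.
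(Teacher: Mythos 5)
Your proof is correct and takes essentially the same route as the paper: both exploit the sphere constraint $|u(x)|=1$ to convert maximization of $\langle u, w\rangle$ into minimization of $\|u-w\|^2$, identifying the maximizer as the pointwise closest-point projection $\Pi_T(w)$. The paper states this equivalence at the level of the $L^2$ norm and leaves the details implicit, whereas you carry out the same computation pointwise and add the admissibility and measurability check, but the underlying argument is identical.
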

\begin{proof}
Writing $w = e^{\Delta \tau} ( (1-\lambda) u_n +\lambda  f )$, we have $L_{\lambda,\tau}^{u_n}(u) = \langle u, w\rangle$, it follows that 
$$
\arg \max_{u\in L^\infty(\Omega; T) }  \  \langle u, w\rangle 
 \ \ = \ \ 
 \arg \min_{u\in L^2(\Omega; T)} \ \| u - w \|^2. 
$$
Since $w \in C^\infty$ and using the definition of $\Pi_T$, the result follows. 
\end{proof}
The iterates in Lemma \ref{lem:maxL} are equivalent to those in Algorithm \ref{a:alg1}. The following theorem gives the stability of Algorithm \ref{a:alg1}. The proof can be adapted directly from \cite[Prop. 4.3]{OnValuedFields}. 
\begin{thm} \label{p:Lyapunov}
Assume $T$ is a closed subset of $\mathbb S^{k-1}$ such that the closest point mapping, $\Pi_T$ is defined on $\mathbb R^k \setminus \mathcal N$. Let $\lambda \in (0,1)$, $\tau > 0$, and $f \in L^\infty(\Omega;\mathbb R^k)$. 
If $e^{\Delta \tau} ( (1-\lambda) u_n +\lambda  f )(x) \notin \mathcal N$ a.e. $x \in \Omega$ and $n\in \mathbb N$, 
the functional $E_{\lambda,\tau}$, defined in \eqref{eq:relaxed}, is non-increasing on the iterates $\{ u_n \}_{n=1}^\infty$, {\it i.e.}, $E_{\lambda,\tau}(u_{n+1}) \leq E_{\lambda,\tau}(u_n)$. 
\end{thm}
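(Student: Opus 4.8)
The plan is to exploit that, for maps into the sphere, $E_{\lambda,\tau}$ and the linearizable functional $\tilde E_{\lambda,\tau}$ differ only by a sign and an additive constant, so that the desired monotonicity $E_{\lambda,\tau}(u_{n+1}) \le E_{\lambda,\tau}(u_n)$ is equivalent to $\tilde E_{\lambda,\tau}(u_{n+1}) \ge \tilde E_{\lambda,\tau}(u_n)$. Expanding \eqref{e:EnergyLambda} exactly as in the computation preceding \eqref{e:S2Energy} gives $E_{\lambda,\tau}(u) = -\tilde E_{\lambda,\tau}(u) + \tfrac{1}{2}\langle u,u\rangle + \tfrac{\lambda}{2}\langle f, e^{\Delta \tau} f\rangle$. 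Since $u_0 \in L^\infty(\Omega;T)$ and every subsequent iterate is $\Pi_T(\cdot)\in L^\infty(\Omega;T)$ by Lemma \ref{lem:maxL}, each $u_n$ satisfies $|u_n(x)|=1$ a.e., so $\langle u_n, u_n\rangle$ is the same constant for all $n$; the $f$-term is also $n$-independent. Thus $E_{\lambda,\tau}(u_n) = -\tilde E_{\lambda,\tau}(u_n) + C$ with $C$ fixed, and I would first record this reduction.

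I would then run the standard sequential-linear-programming (Frank--Wolfe-type) monotonicity argument for the convex functional $\tilde E_{\lambda,\tau}$. By Lemma \ref{lem:function2}(ii) it is convex, hence lies above its tangent at $u_n$:
$$
\tilde E_{\lambda,\tau}(u_{n+1}) \ge \tilde E_{\lambda,\tau}(u_n) + \Big\langle \tfrac{\delta \tilde E_{\lambda,\tau}(u_n)}{\delta u},\, u_{n+1} - u_n \Big\rangle = \tilde E_{\lambda,\tau}(u_n) + L_{\lambda,\tau}^{u_n}(u_{n+1}) - L_{\lambda,\tau}^{u_n}(u_n),
$$
where I have used that the first variation of $\tilde E_{\lambda,\tau}$ is exactly the linear functional $L_{\lambda,\tau}^{u_n}$ of Lemma \ref{lem:function2}(i), together with its linearity in the argument. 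By construction $u_{n+1}$ maximizes $L_{\lambda,\tau}^{u_n}$ over $L^\infty(\Omega;T)$, and $u_n$ is itself a competitor, so $L_{\lambda,\tau}^{u_n}(u_{n+1}) \ge L_{\lambda,\tau}^{u_n}(u_n)$. The displayed inequality then yields $\tilde E_{\lambda,\tau}(u_{n+1}) \ge \tilde E_{\lambda,\tau}(u_n)$, which by the reduction is the claim.

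The step I would treat most carefully — and the only real subtlety — is ensuring that the linearized maximizer is genuinely attained at $u_{n+1} = \Pi_T(e^{\Delta\tau}((1-\lambda)u_n + \lambda f))$ and lands back in $L^\infty(\Omega;T)$. This is precisely where the hypothesis $e^{\Delta\tau}((1-\lambda)u_n + \lambda f)(x)\notin\mathcal N$ a.e. enters: it guarantees $\Pi_T$ is defined almost everywhere on the diffused argument, so Lemma \ref{lem:maxL} applies and identifies the maximum of $L_{\lambda,\tau}^{u_n}$ over $L^\infty(\Omega;T)$ (equivalently over $L^\infty(\Omega;\mathcal B)$, since a linear functional over the convex hull attains its maximum at an extreme point lying in $T$). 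An induction then confirms that every iterate remains in $L^\infty(\Omega;T)$, so that $u_n$ is always a feasible competitor and the pointwise constraint $|u_n|=1$ used in the reduction holds at each step. With this bookkeeping in place the argument is exactly that of \cite[Prop. 4.3]{OnValuedFields}, requiring nothing beyond convexity of $\tilde E_{\lambda,\tau}$ and the optimality of the linearized step.
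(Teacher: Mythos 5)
Your proposal is correct and takes essentially the same approach as the paper: the paper's proof is exactly the adaptation of \cite[Prop.~4.3]{OnValuedFields} that you describe, namely rewriting $E_{\lambda,\tau} = -\tilde E_{\lambda,\tau} + \textrm{const}$ on sphere-valued maps and then combining the convexity of $\tilde E_{\lambda,\tau}$ (Lemma~\ref{lem:function2}) with the optimality of the linearized step (Lemma~\ref{lem:maxL}). Your attention to where the hypothesis $e^{\Delta \tau}((1-\lambda)u_n + \lambda f)(x) \notin \mathcal N$ enters, and to the iterates remaining in $L^\infty(\Omega;T)$, matches the paper's intent.
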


\begin{rem}
Regarding the condition assumed in Theorem~\ref{p:Lyapunov}, in practice, we do not observe that $e^{\Delta \tau} ( (1-\lambda) u_n +\lambda  f )(x) \in \mathcal N$ for any $x \in \Omega$ or $n\in \mathbb N$. If this condition did occur, a random closest point could be assigned by $\Pi_T$. 
\end{rem}

\begin{rem}
For the special case that $T = \mathbb S^0 = \{ \pm 1 \}$, a proof of convergence for a discrete version of Algorithm \ref{a:alg1} can be adapted from \cite[Prop. 4.4]{OnValuedFields}. 
\end{rem}

\subsection{Energy splitting interpretation of Algorithm~\ref{a:alg1}} \label{s:EnergySplit}
In this section, we interpret Algorithm~\ref{a:alg1} as a splitting method for  \eqref{eq:relax}.  

Define the {\sl proximal operator for the functional $E\colon L^2(\Omega; \mathbb R^k)\to \mathbb R$}, 
${\bf Prox}_E\colon L^2(\Omega;\mathbb{R}^k) \to L^2(\Omega; \mathbb{R}^k)$, by
\[{\bf Prox}_E(v) := \arg\min_{u\in L^2(\Omega; \mathbb R^k)} \ E(u)+\frac{1}{2} \|u-v\|^2,
\]
and, for $\beta>0$, similarly define the {\sl proximal operator of the scaled functional $\beta E \colon L^2(\Omega; \mathbb R^k)\to \mathbb R$}, ${\bf Prox}_{\beta E}\colon L^2(\Omega;\mathbb{R}^k) \to L^2(\Omega; \mathbb{R}^k)$, by
\[{\bf Prox}_{\beta E}(v) := \arg\min_{u\in L^2(\Omega; \mathbb R^k)} \ E(u)+\frac{1}{2\beta} \|u-v\|^2.
\]
To simplify notation, we rewrite \eqref{eq:relax} by 
\begin{equation} \label{e:Esplit}
\inf_{u \in H^1(\Omega;\mathbb R^k )}  \ \mathcal{E}_1(u)+\mathcal{E}_2(u)+\mathcal{E}_3(u)
\end{equation}
where $\mathcal{E}_1(u) = \frac{1}{2} \int_\Omega  |\nabla u(x) |^2 \ dx$, $ \mathcal{E}_2(u) =  \frac{1}{2 \alpha} \int_{\Omega} |u(x) - f(x)|^2 \ dx$, and $ \mathcal{E}_3(u) =  \frac{1}{\varepsilon^2}  \int_\Omega L \left( u(x) \right) \ dx$.

Now, we introduce an iterative method by formally splitting the minimization in \eqref{e:Esplit} as follows. Let $u_n$ be the approximation to the solution at the $n$-th iteration. We first consider $ \mathcal{E}_2$ and solve
\begin{equation}\label{e:energysplitting1}
u_n^\star =  {\bf Prox}_{\tau \mathcal{E}_2}(u_n).
\end{equation}
We then evolve $u_n^\star$ by the gradient flow of $\mathcal{E}_1$ until time $t = \tau$,
\begin{equation} \label{e:energysplitting2}
\begin{cases}
v_t = \Delta v ,\\
v|_{t=0}= u_n^\star
\end{cases}
\end{equation}
to obtain $u_n^{\star\star} = v(\tau)$.
Finally, we consider $\mathcal E_3$ and set
\begin{equation}\label{e:energysplitting3}
u_{n+1} =  {\bf Prox}_{\tau \mathcal{E}_3}(u_n^{\star\star}).
\end{equation}

We can directly solve \eqref{e:energysplitting1}, by setting the first variation to zero,
$$
\frac{\delta (\mathcal{E}_2(u) + \frac{1}{2\tau}\|u-u_n\|^2)}{\delta u} = \frac{1}{\alpha} (u-f)+\frac{1}{\tau} (u-u_n) = 0,
$$
and solving for $u$ to obtain 
$$
u_n^\star = (1- \tilde \lambda)u_n + \tilde \lambda f, 
\qquad \textrm{where} \ \tilde \lambda = \frac{\tau}{\alpha+\tau} = \frac{\lambda}{1 + \lambda}.
$$
This together with \eqref{e:energysplitting2} gives the diffusion step in Algorithm~\ref{a:alg1} for a modified convex combination parameter. 

As $\varepsilon \searrow 0$, the solution to \eqref{e:energysplitting3} is obtained by taking a function $u \in L^2(\Omega; T)$ such that  \[u = \arg\min_{u \in L^2(\Omega; T)} \ \|u-u_n^{\star\star} \|^2,\] which is precisely the point-wise projection, 
$$
u_{n+1} = \Pi_T(u_n^{\star\star}).
$$ 
This is the projection step in Algorithm~\ref{a:alg1}.

\section{Computational examples} \label{s:CompEx}
In this Section, we demonstrate the diffusion generated method in Algorithm~\ref{a:alg1}, developed  in Section~\ref{s:alg}, on several synthetic and test numerical experiments. 
Several of the numerical experiments considered are from  \cite{Bacak_2016}, which can provide a comparison. 
For these examples, we both cite the section number and include details of the examples for completeness. 

The data in Sections~\ref{s:lemniscate} and \ref{s:S2} is periodic, so we solve the heat diffusion equation in Algorithm \ref{a:alg1} 
with a periodic boundary condition instead of the free-space heat diffusion equation. 
For all other examples, we solve the free-space heat diffusion equation.
All methods were implemented in MATLAB and results reported below were obtained on a laptop with a 2.7GHz Intel Core i5 processor and 8GB of RAM. 

\subsection{Example: a synthetic $\mathbb{S}^2$-valued one-dimensional image} \label{s:lemniscate}
Following \cite[\S5.1]{Bacak_2016}, we consider the lemniscate of Bernoulli, given by 
$$
\gamma(t)  = \frac{\pi/2}{\sin^2(t)+1} \left( \cos(t), \cos(1) \sin(t) \right), \qquad t \in [0,2\pi].
$$
For $p \in \mathcal{M}$ and  $\xi \in T_p \mathcal{M}$, let $\Gamma_{p,\xi}$ be the unique geodesic starting from $\Gamma_{p,\xi}(0) = p$ with $\dot{\Gamma}_{p,\xi}(0) = \xi$. Then we define $\exp_p\colon T_p \mathcal{M} \to \mathcal{M}$ by $\exp_p(\xi) = \Gamma_{p,\xi}(1)$. Then, one spherical lemniscate curve can be obtained by
\[\gamma_S(t) = \exp_p(\gamma(t))\]
with $p = (0,0,1)$. We discretize the curve in the parameter space of $t$ at $t_i := \frac{2\pi i}{511}, i = 0, \ldots , 511$ to get an $\mathbb S^2$-valued signal, which we denote by $\{f_{o,i}\}_{i=0}^{511}$. We add noise to the data by taking  $f_i = \exp_{f_{o,i}}(\eta_i)$ with $\eta_i = (\eta_{i1},\eta_{i2})$, 
where $\eta_{i1}$ and $\eta_{i2}$ are independent Gaussian noises with standard deviation of $\frac{\pi}{30}$. In  Figure~\ref{f:lemniscate}(a), the blue markers indicate the original data and the red markers indicate the noisy data.

In this example, we take the target set, $T$, to be $\mathbb{S}^2 = \{ x\in \mathbb{R}^3\colon |x| = 1\}$. 
Then, we have 
$\mathcal N = \{x \in \mathbb R^2 \colon x = 0\}$ and 
$
\Pi_T (x) = 
\frac{x}{|x|} \  \textrm{if }  \  x\neq 0 .
$
Applying Algorithm \ref{a:alg1}, we get the denoised results shown in Figures~\ref{f:lemniscate}(b)--\ref{f:lemniscate}(d) with a fixed $\tau = 10^{-3}$ and $\lambda = 0.05$, $0.1$, and $0.15$, respectively. Since the original image is periodic, we solve the diffusion equation in Algorithm \ref{a:alg1} with the periodic boundary condition.

We observe that the denoised results very closely match the original data and that the results are relatively insensitive to the parameter $\lambda$. 
All of these simulations were completed within $0.01$ seconds. 

\begin{figure}[t]
\begin{center}
\subfigure[Original (blue) and noisy data (red)]{\includegraphics[width=.45\textwidth]{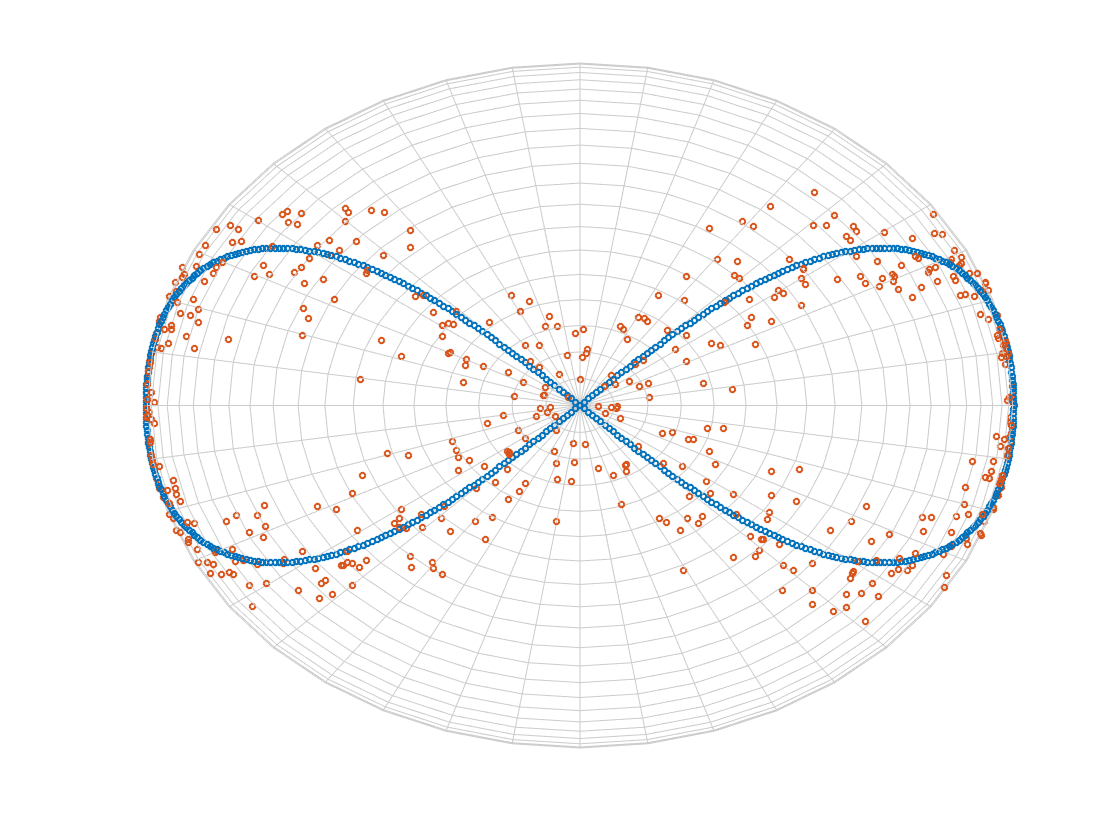}} 
\subfigure[Denoised data with $\lambda = 0.05$ and $\tau = 10^{-3}$]{\includegraphics[width=.45\textwidth]{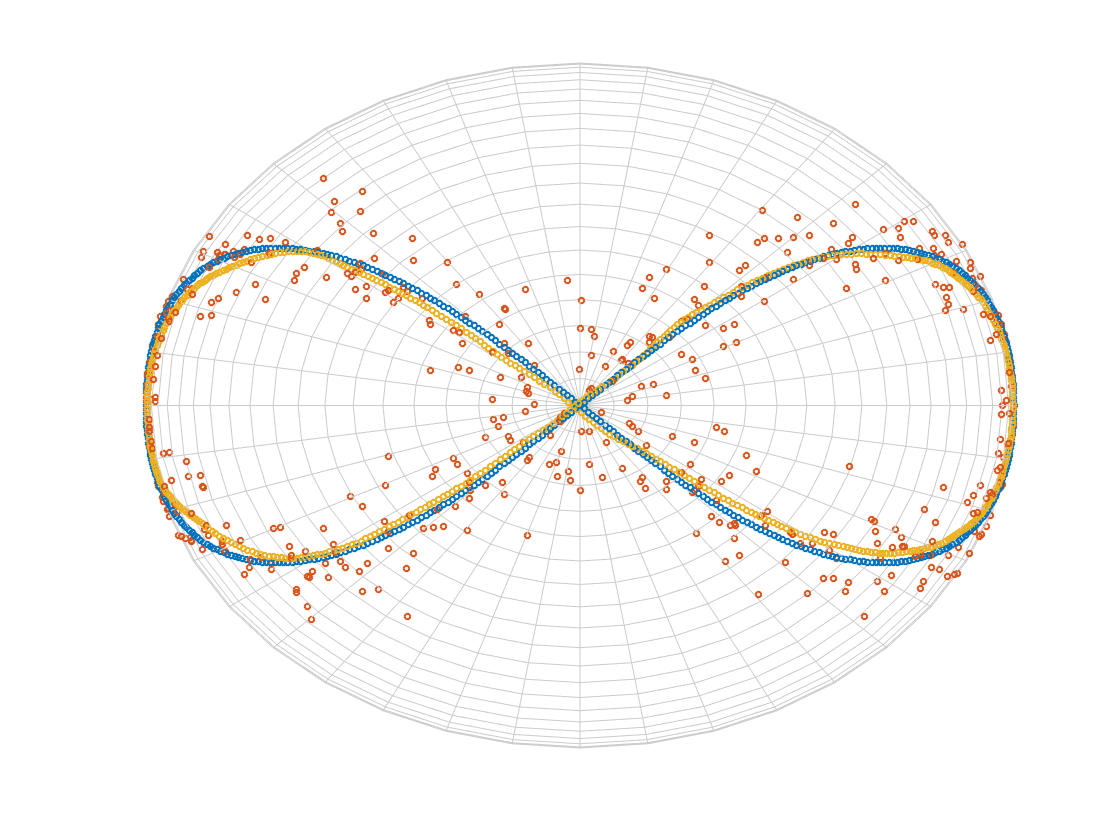}}
\subfigure[Denoised data with $\lambda = 0.1$ and $\tau =10^{-3}$]{\includegraphics[width=.45\textwidth]{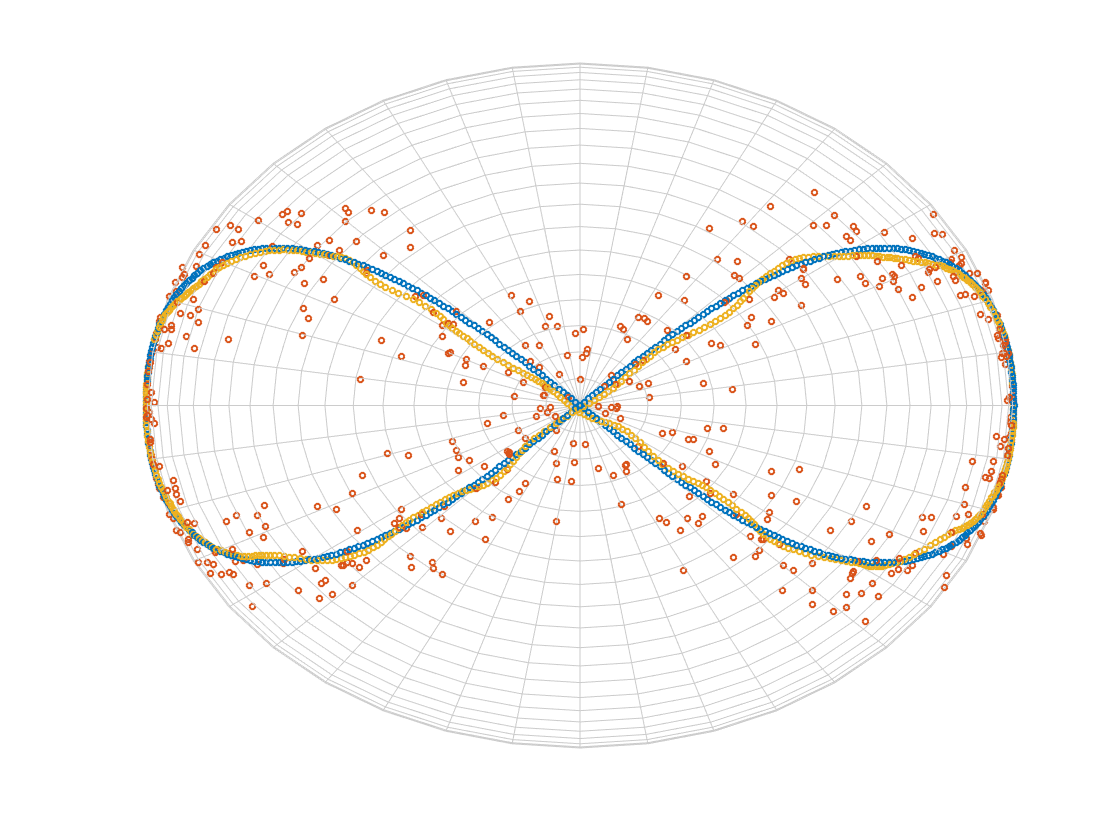}}
\subfigure[Denoised data with $\lambda = 0.15$ and $\tau = 10^{-3}$]{\includegraphics[width=.45\textwidth]{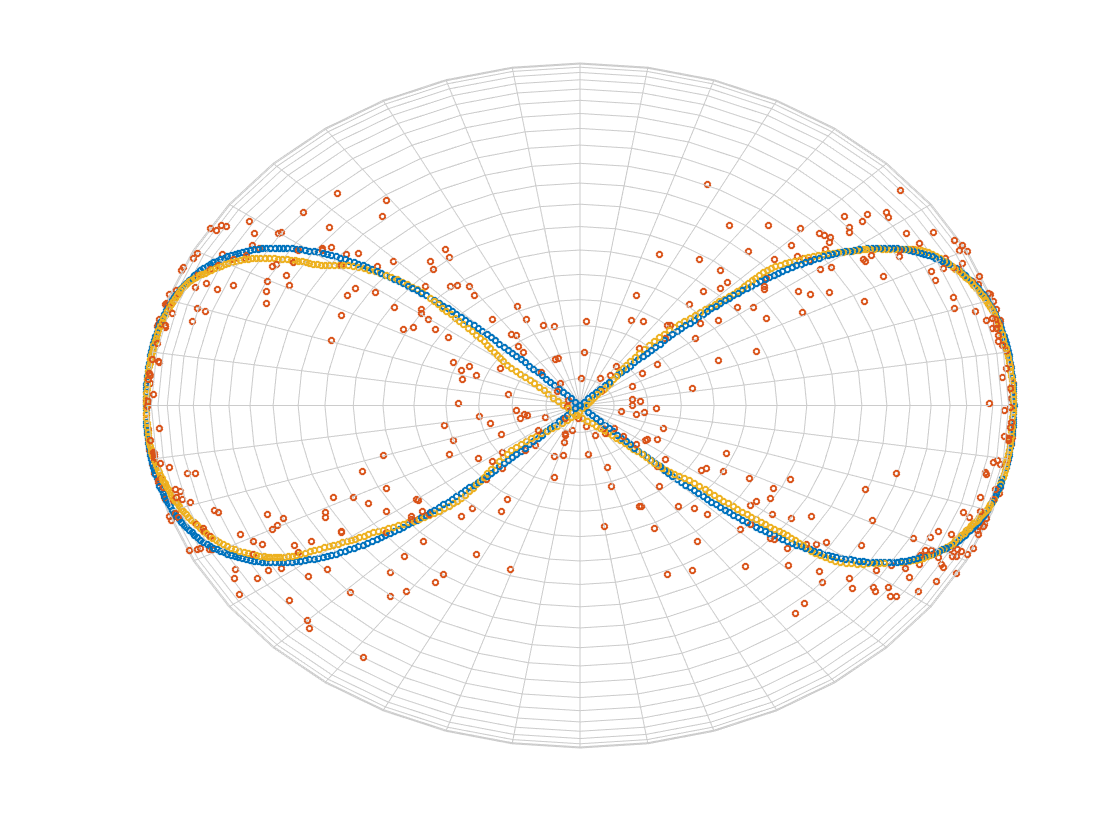}}
\caption{Results of denoising  an obstructed lemniscate of Bernoulli on the sphere, $\mathcal{S}^2$, with $\lambda = 0.05, 0.1$ and $ 0.15$, respectively. In this simulation, $\tau$ is fixed as $10^{-3}$. See Section~\ref{s:lemniscate}. }
\label{f:lemniscate}
\end{center}
\end{figure}

\subsection{Example: a synthetic $\mathbb{S}^2$-valued image} \label{s:S2}
Again, following \cite[\S5.1]{Bacak_2016}, we define an $\mathbb{S}^2$-valued vector-field
by
\[G(t, s) = R_{x(t)+y(s)}S_{x(t)-y(s)} e_3, 
\qquad \quad  
t\in [0, 8 \pi ],  \ \ s \in [0, 8 \pi ]
\]
where 
\[x(t) = t+\frac{\pi}{2} \left\lfloor \frac{t}{2\pi} \right\rfloor, \qquad  y(t) = t+\frac{\pi}{2} \left\lfloor \frac{t}{2\pi} \right\rfloor,\] 

\[ R_\theta := \left[\begin{matrix}
\cos \theta &−\sin \theta & 0 \\
\sin \theta & \cos \theta &0 \\
0 &0 & 1 
\end{matrix} \right], 
\qquad \textrm{and} \qquad 
S_\theta := \left[\begin{matrix}
\cos \theta & 0 &−\sin \theta  \\
0 & 1 & 0\\
\sin \theta & 0  & \cos \theta 
\end{matrix} \right]. 
\]

We sample both dimensions, $t$ and $s$, with $n = 64$ points to obtain a discrete vector field
$f_o \subset \mathbb{S}^2$ which is given in Figure~\ref{f:sphere_valued_image}(a). Similar to the noise in \ref{s:lemniscate}, we add the Gaussian noise in the tangential plane of each point with standard deviation $\frac{4\pi}{45}$; the noisy data is displayed in Figure~\ref{f:sphere_valued_image}(b).

\begin{figure}[t]
\begin{center}
\subfigure[Original data.]{\includegraphics[width=.48\textwidth,clip,trim= 4cm 4cm 4cm 4cm]{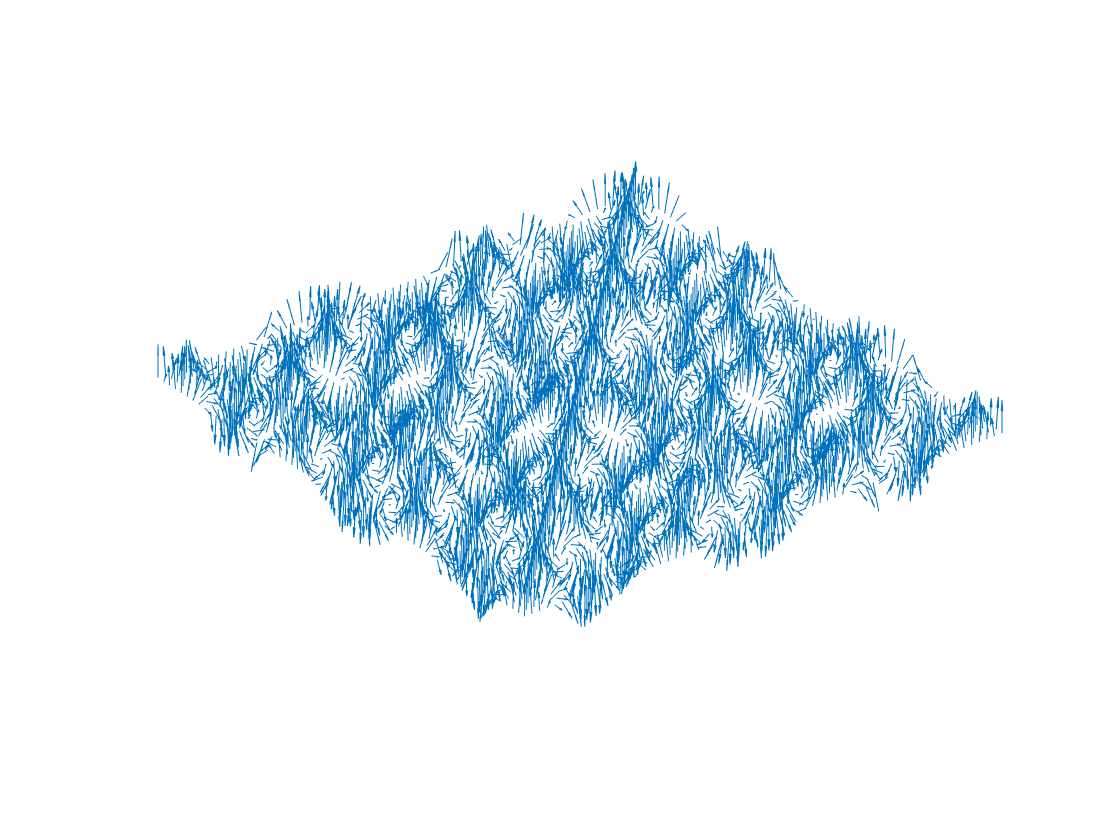}}
\subfigure[Noisy data.]{\includegraphics[width=.48\textwidth,clip,trim= 4cm 4cm 4cm 4cm]{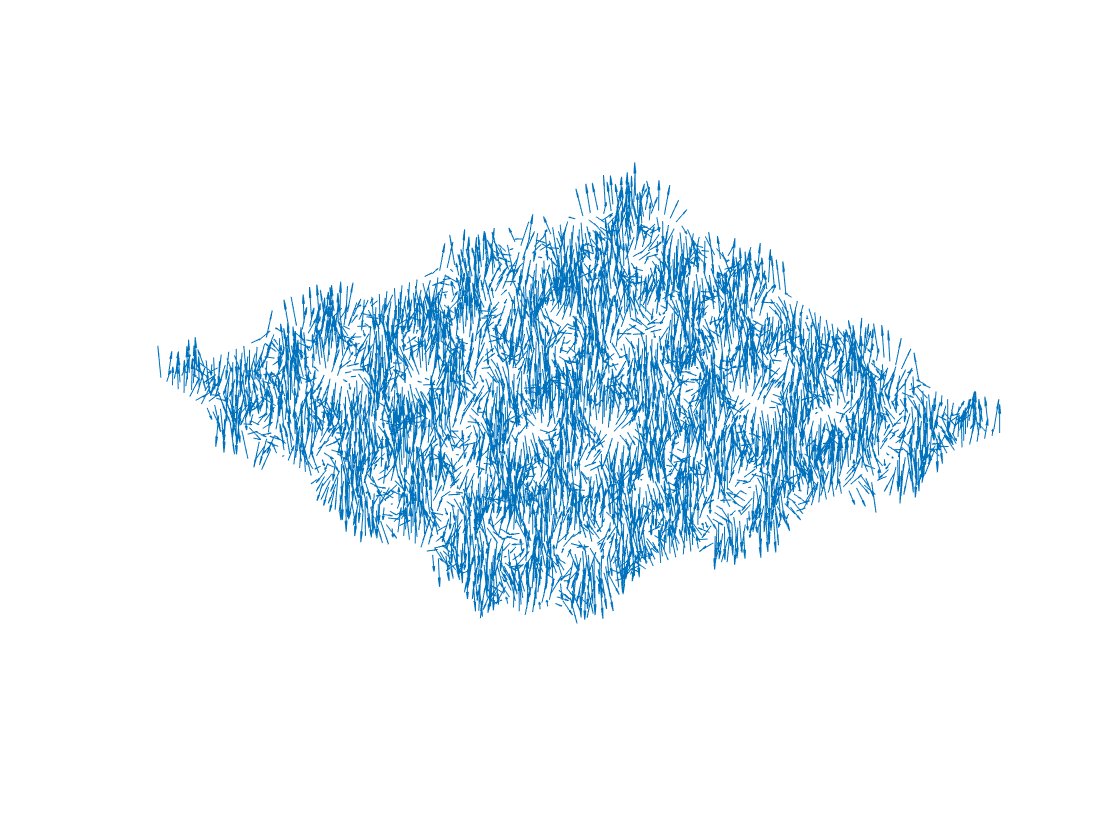}}
\subfigure[Denoised data with $\lambda = 0.05$ and $\tau = 10^{-3}$.]{\includegraphics[width=.48\textwidth,clip,trim= 4cm 4cm 4cm 4cm]{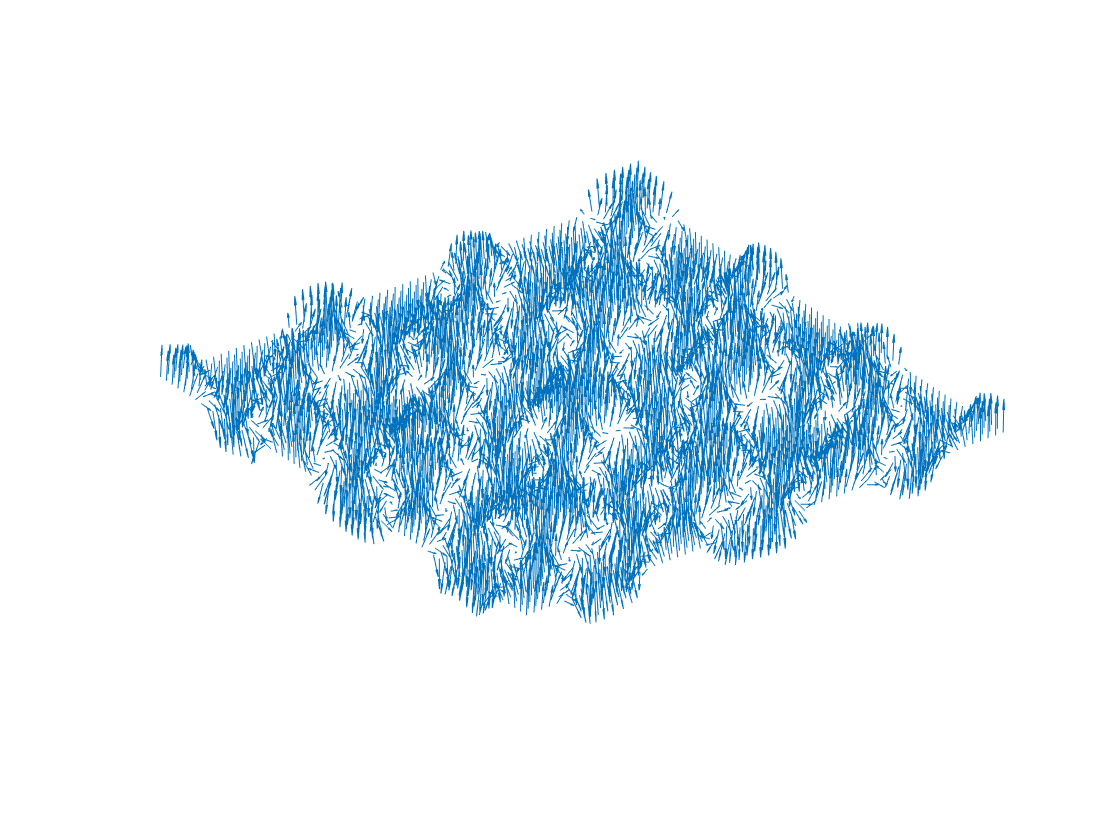}}
\subfigure[Denoised data with $\lambda = 0.1$ and $\tau = 10^{-3}$.]{\includegraphics[width=.48\textwidth,clip,trim= 4cm 4cm 4cm 4cm]{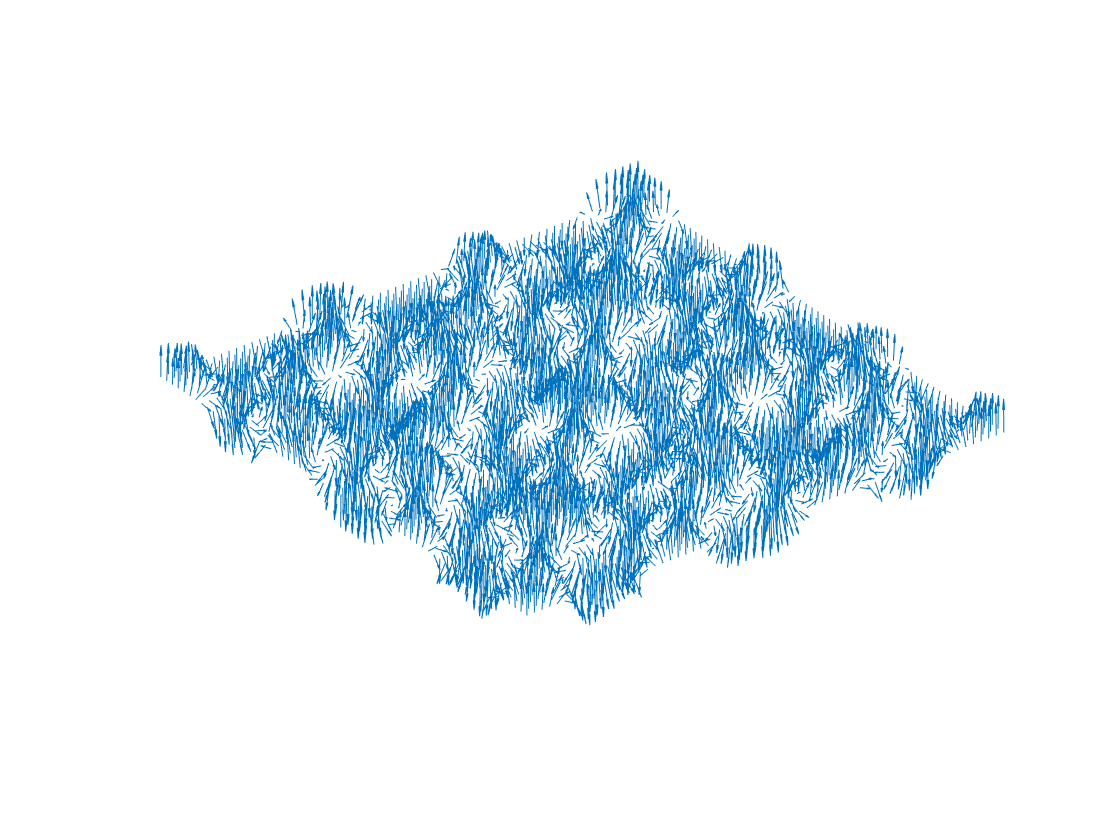}}
\subfigure[Denoised data with $\lambda = 0.15$ and $\tau = 10^{-3}$.]{\includegraphics[width=.48\textwidth,clip,trim= 4cm 4cm 4cm 4cm]{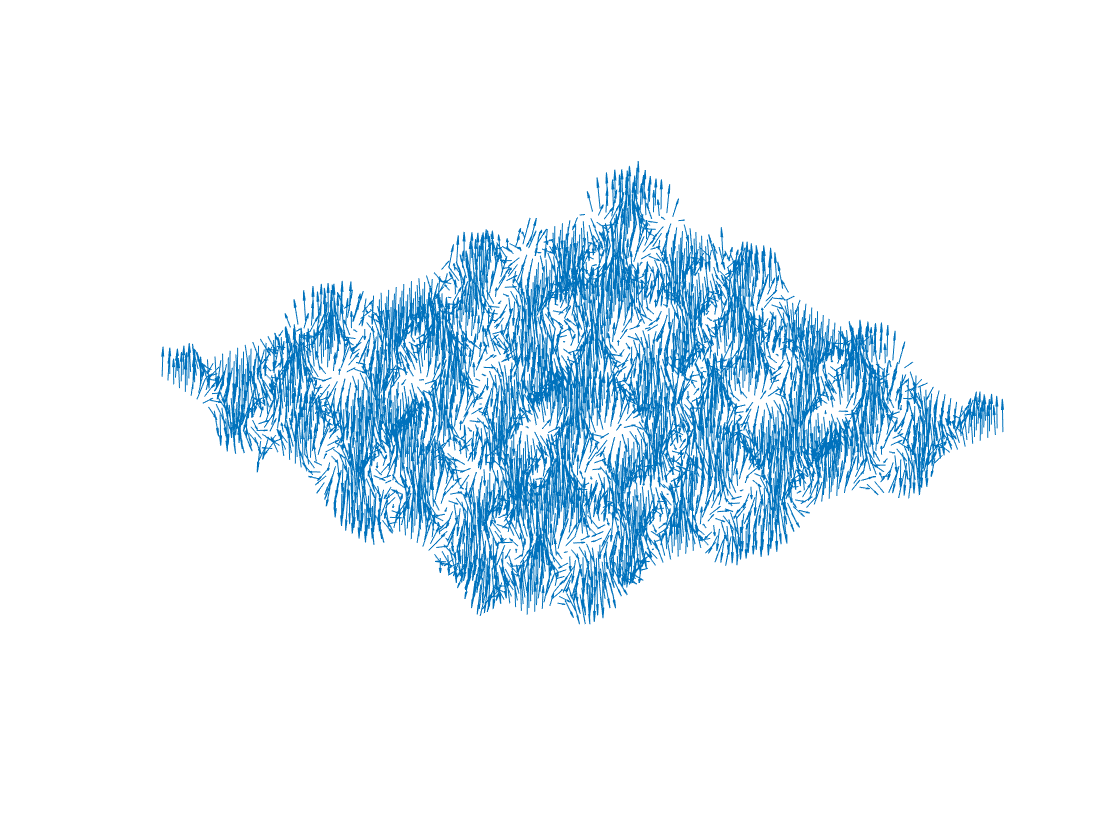}}
\subfigure[Denoised data with $\lambda = 0.2$ and $\tau = 10^{-3}$.]{\includegraphics[width=.48\textwidth,clip,trim= 4cm 4cm 4cm 4cm]{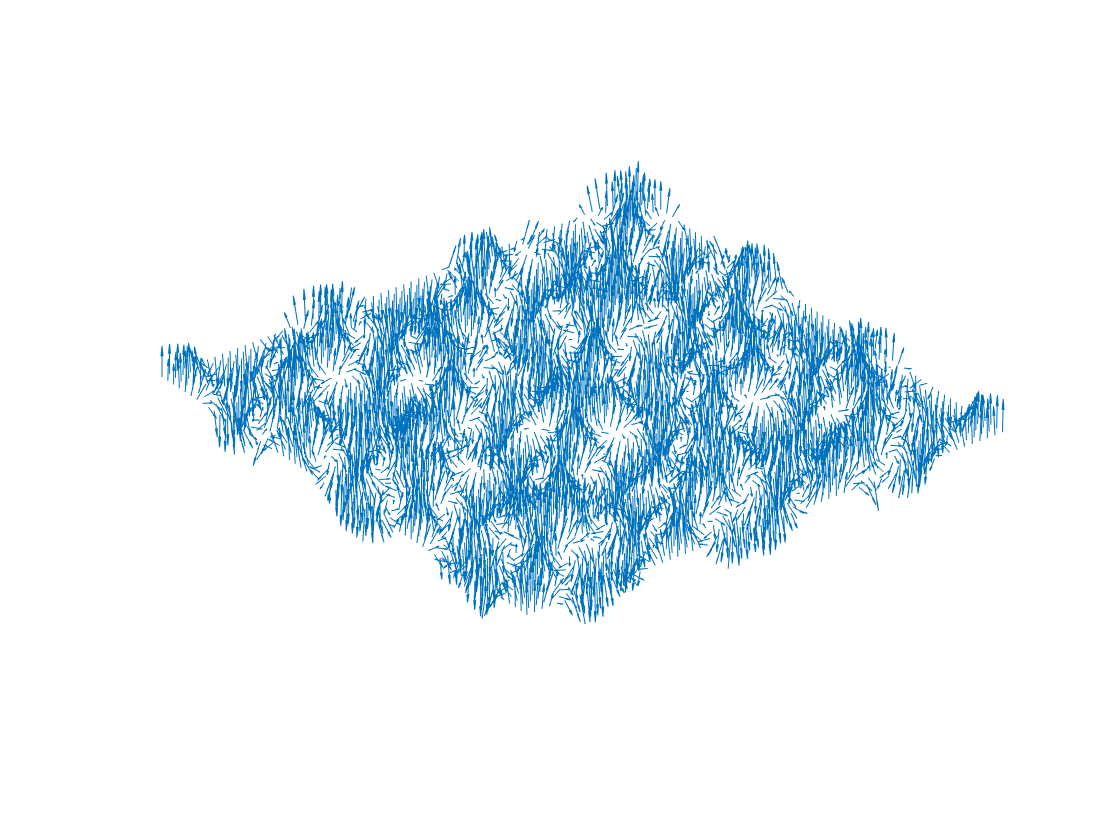}}
\caption{ Results of the reconstruction of a noisy $\mathbb{S}^2$-valued image with $\lambda = 0.05$, $0.1$, $0.15$, and $0.2$, respectively. In all the simulations, $\tau$ is fixed as $10^{-3}$. See Section~\ref{s:S2}.}
\label{f:sphere_valued_image}
\end{center}
\end{figure}

In this example, we take the target set, $T$, to be $\mathbb{S}^2 = \{ x\in \mathbb{R}^3\colon |x| = 1\}$. 
Then, we have 
$\mathcal N = \{x \in \mathbb R^3 \colon x = 0\}$ 
and 
$\Pi_T (x) = \frac{x}{|x|} \  \textrm{if }  \  x\neq 0$. Since the original image is periodic, we solve the diffusion equation in Algorithm \ref{a:alg1} with the periodic boundary condition.

The  results of Algorithm \ref{a:alg1} with $\tau = 10^{-3}$ and $\lambda = 0.05$, $0.1$, $0.15$, and $0.2$ are displayed in Figure~\ref{f:sphere_valued_image}(c)--(f). The numerical results show that Algorithm \ref{a:alg1} is robust with respect to the value of the parameter $\lambda$ and it is applicable to recover the original data image in Figure~\ref{f:sphere_valued_image}(a). It is also observed that the denoised image is slightly smoothed when decreasing the value of $\lambda$. Again, the algorithm performs very efficiently; all simulations can be done within 0.2 seconds.

\subsection{Example: the `peppers' image} \label{s:Peppers}
Following \cite[\S5.1]{Bacak_2016}, we consider the denoising of the `peppers' image, shown in Figure~\ref{f:Pepper_data}(a). It is distorted with Gaussian noise in each of the red, green, and blue (RGB) channels with standard deviation as $0.1$, as show in Figure~\ref{f:Pepper_data}(b).

\begin{figure}
\subfigure[Original image]{\includegraphics[width=.45\textwidth]{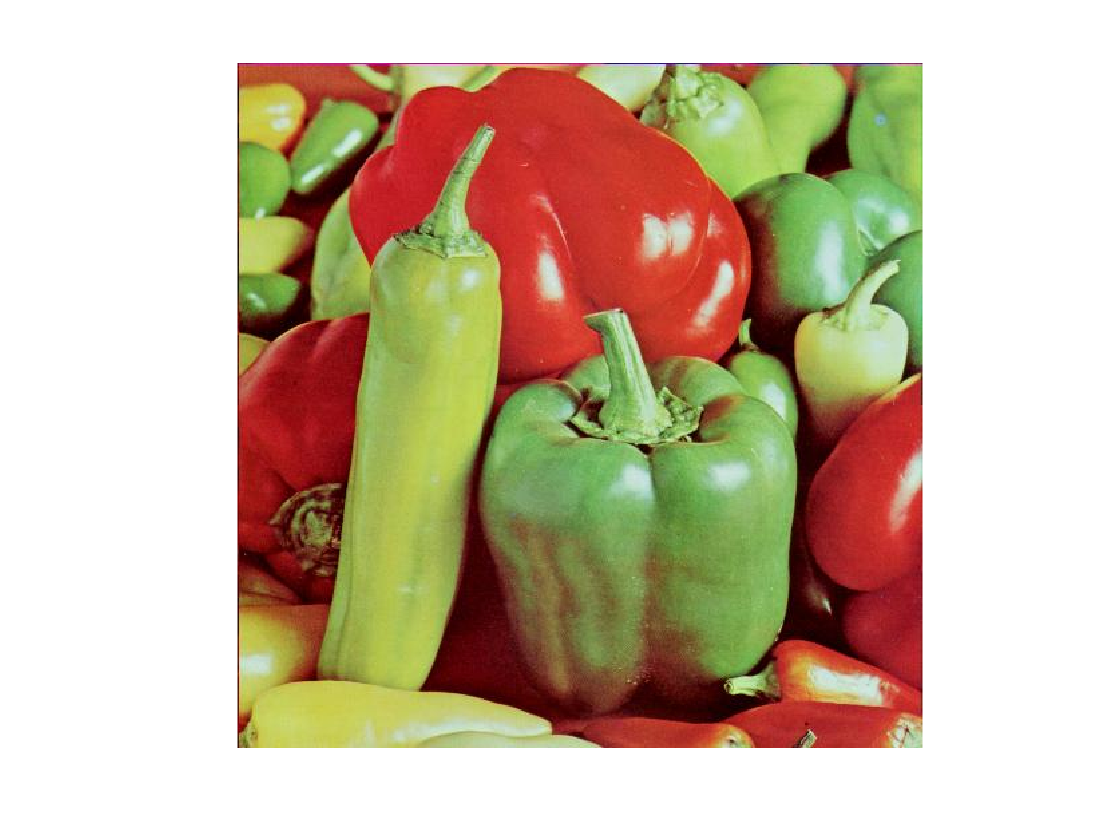}}
\subfigure[Noisy image]{\includegraphics[width=.45\textwidth]{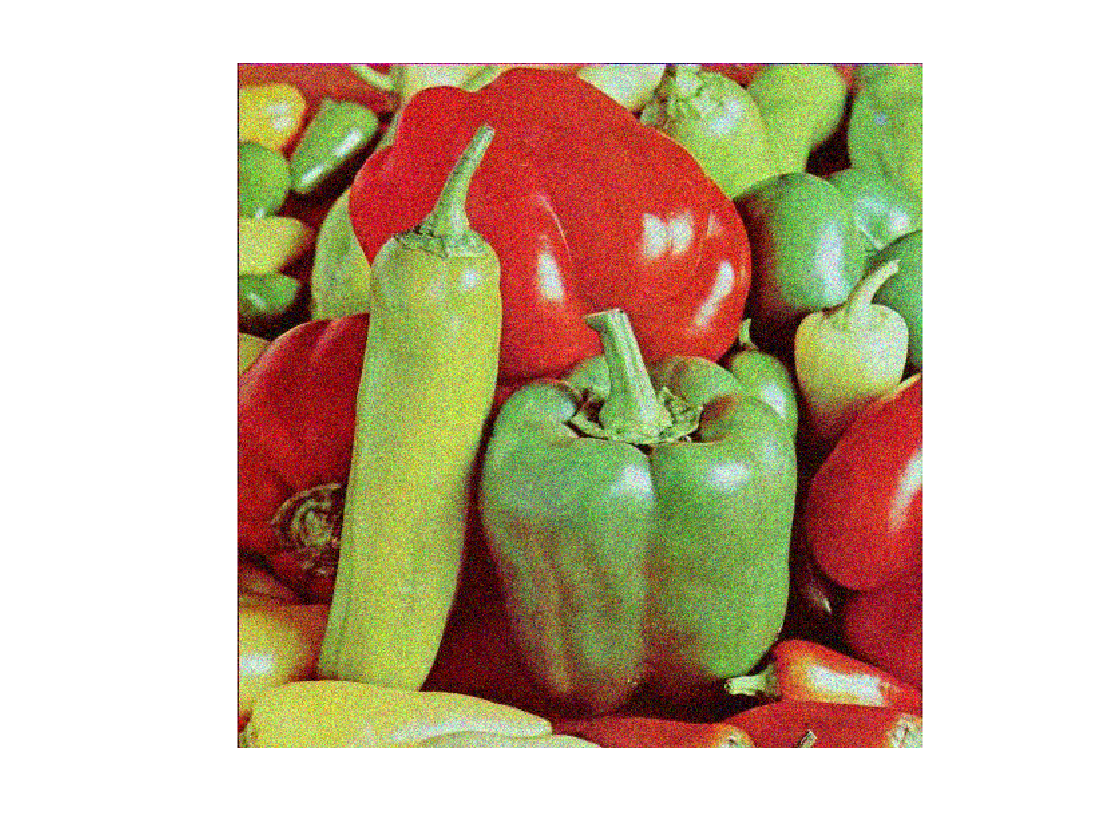}}
\caption{The original `peppers' image and one corrupted with noise. See Section~\ref{s:Peppers}.}\label{f:Pepper_data}
\end{figure}

We first consider the image represented in RGB channels. Here, the target set is the unit cube, $T =  [0,1]^3 \subset \mathbb R^3$. Since this is a convex set, we let $\Pi_T$ be the convex projection and apply Algorithm~\ref{a:alg1} to  denoise Figure~\ref{f:Pepper_data}(b). Figure~\ref{f:Pepper_rgb} displays the denoised result for different values of the parameter $\lambda$. 

We next consider the image represented in hue, saturation, and value (HSV) channels. 
Here, the color space, $(H,S,V) \in S^1 \times [0,1] \times [0,1]$,  consists of a  $\mathbb{S}^1$-valued hue component, $H$, and two $[0,1]$-valued components: saturation, $S$, and value, $V$. For this product target space, $T$, we define the mapping $\Pi_T$ to map onto each component individually.  The results of applying Algorithm~\ref{a:alg1} to  denoising Figure~\ref{f:Pepper_data}(b) in the HSV channels are displayed in Figure~\ref{f:Pepper_hsv} for different values of the parameter $\lambda$.  

We use a relatively large value of $\lambda$ compared to other numerical experiments since the original image is non-smooth. 
We observe that the denoised images are slightly blurred, but the results are robust to changes in the parameter $\lambda$. 
We use the peak signal-to-noise ratio (PSNR) to evaluate the quality of the denoised image. The PSNR obtained in  \cite{Bacak_2016} for both RGB and HSV channels ranges between 28.16 to is 31.24, which are slightly better than the values obtained using this much simpler method. 

\begin{figure}[ht]
\begin{center}
\subfigure[$\lambda=0.85$, $ \tau=10^{-4}$, PSNR$=28.3314$.]{\includegraphics[width=.32\textwidth,clip,trim= 6cm 1cm 6cm 1cm]{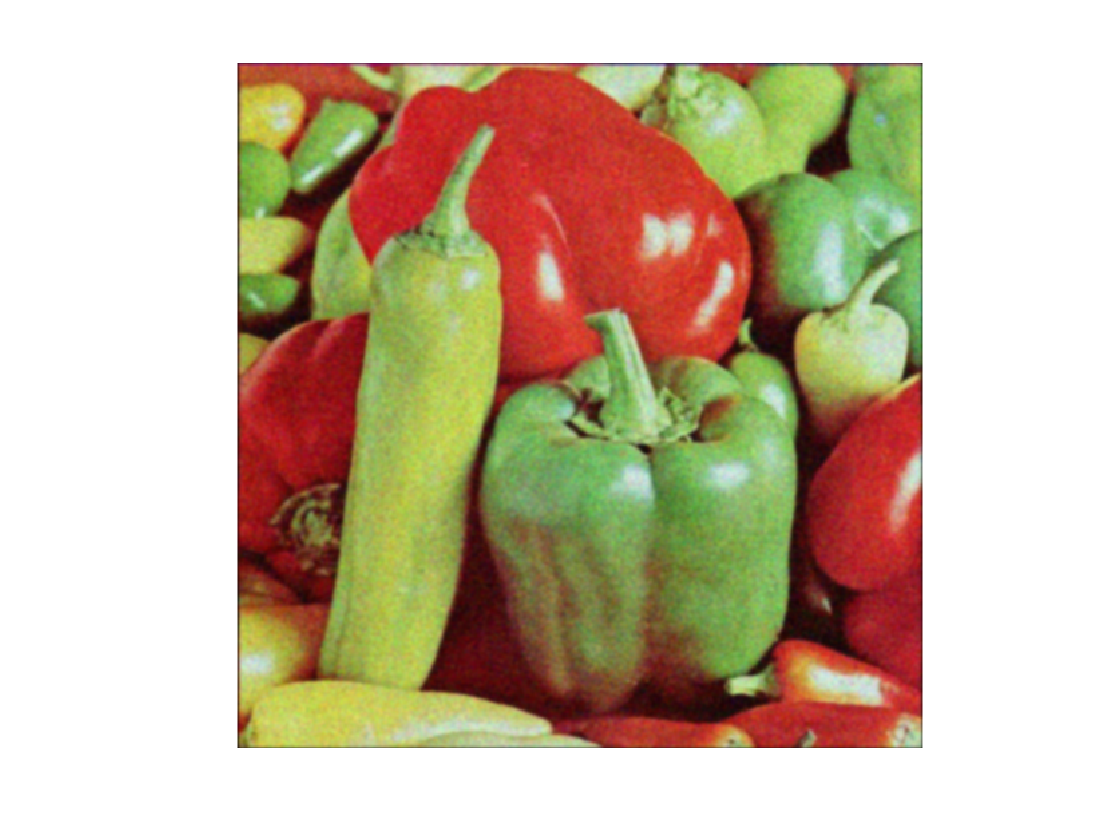}}
\subfigure[$\lambda=0.9$, $\tau = 10^{-4}$, PSNR$=28.3754$.]{\includegraphics[width=.32\textwidth,clip,trim= 6cm 1cm 6cm 1cm]{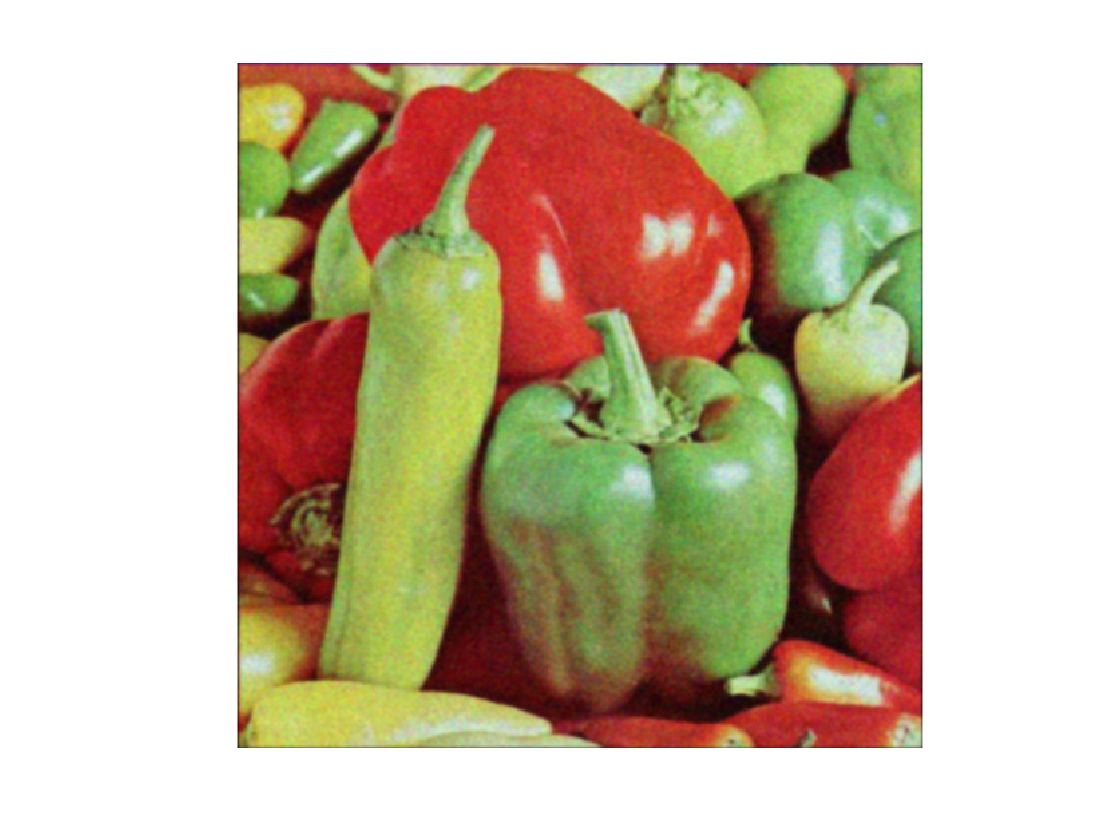}}
\subfigure[$\lambda = 0.95$,$\tau = 10^{-4}$, PSNR$=28.4118$.]{\includegraphics[width=.32\textwidth,clip,trim= 6cm 1cm 6cm 1cm]{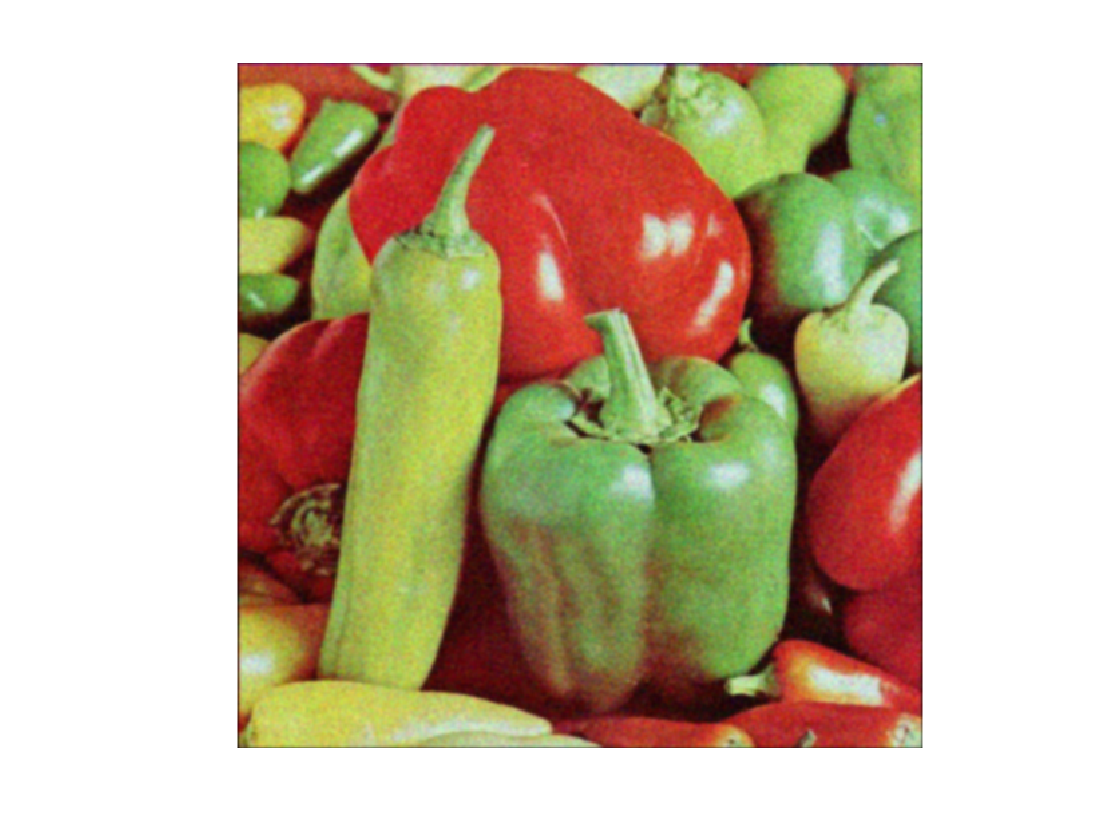}}
\caption{Denoising in the RGB channel on the noisy `peppers' image (see Figure~\ref{f:Pepper_data}(b)) with $\tau = 10^{-4}$ and $\lambda = 0.85$, $0.9$, and $0.95$. The PSNR listed below each image indicates the quality of  the result. See Section~\ref{s:Peppers}.}
\label{f:Pepper_rgb}
\end{center}
\end{figure}

\begin{figure}[ht]
\begin{center}
\subfigure[$\lambda=0.85$, $\tau=10^{-4}$, PSNR$=26.3967$.]{\includegraphics[width=.32\textwidth,clip,trim= 6cm 1cm 6cm 1cm]{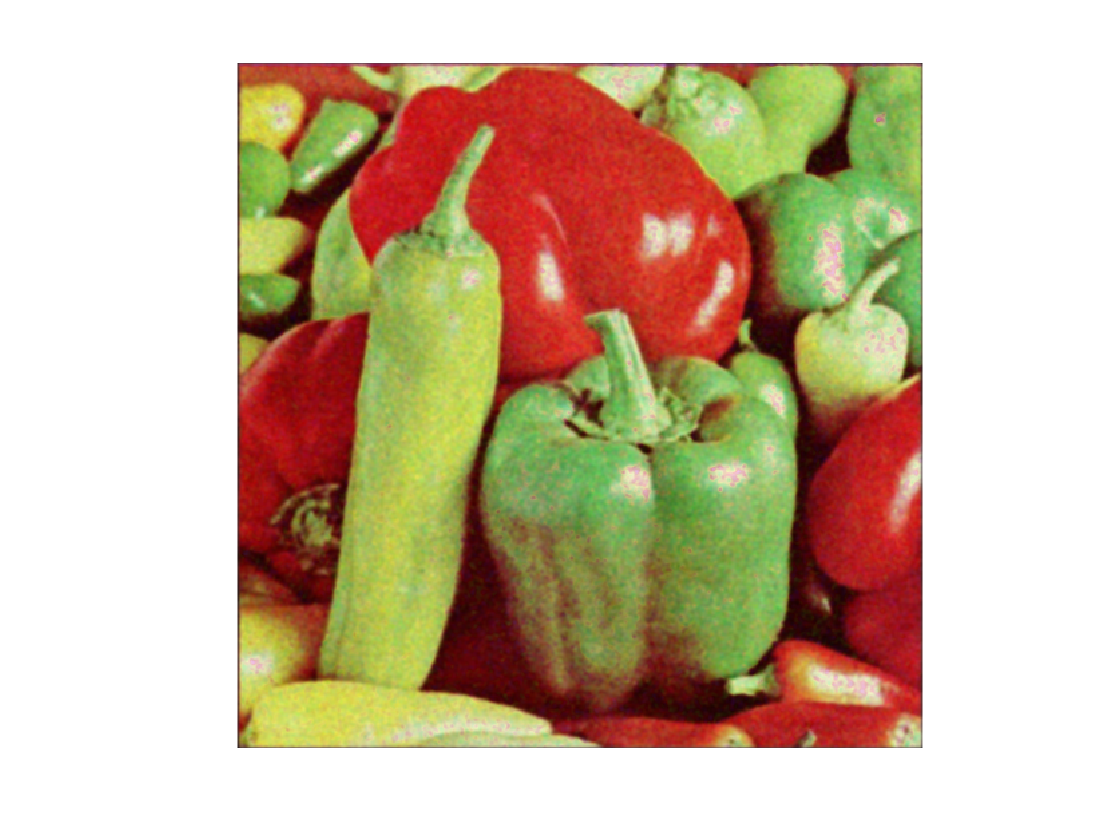}}
\subfigure[$\lambda=0.9$, $\tau = 10^{-4}$, PSNR$=26.4092$.]{\includegraphics[width=.32\textwidth,clip,trim= 6cm 1cm 6cm 1cm]{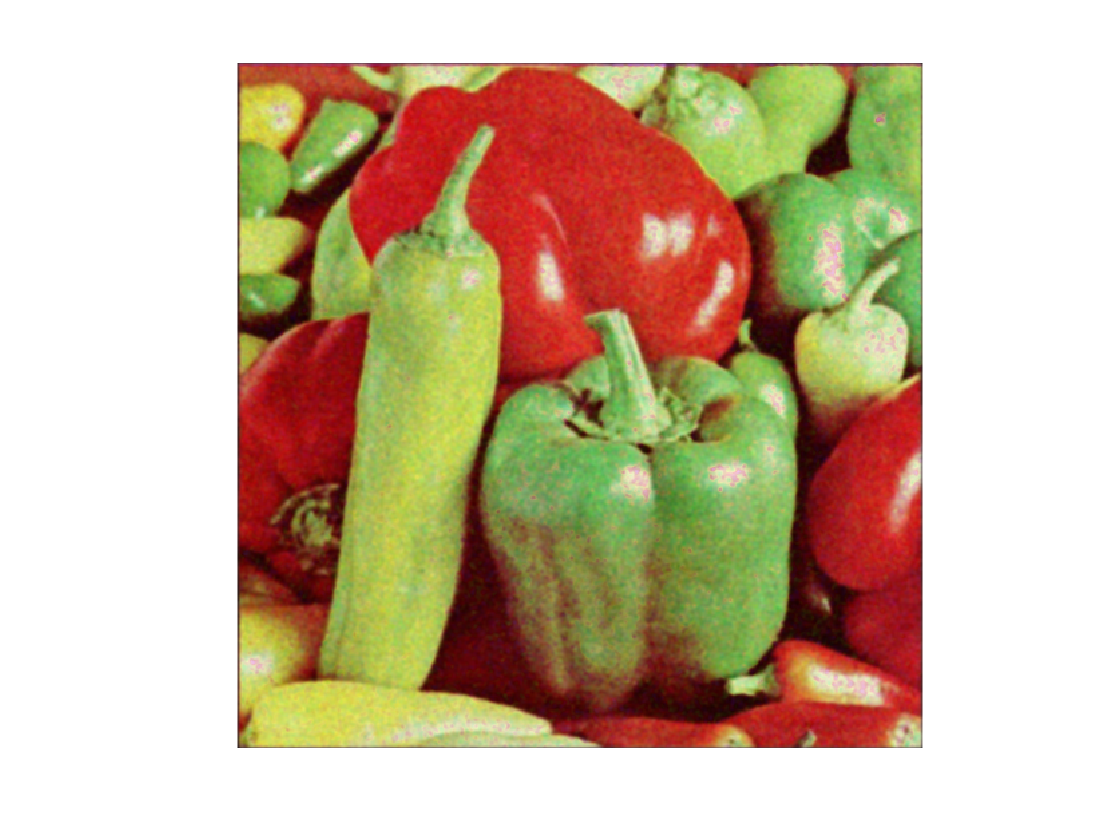}}
\subfigure[$\lambda=0.95$, $\tau = 10^{-4}$, PSNR$=26.4179$.]{\includegraphics[width=.32\textwidth,clip,trim= 6cm 1cm 6cm 1cm]{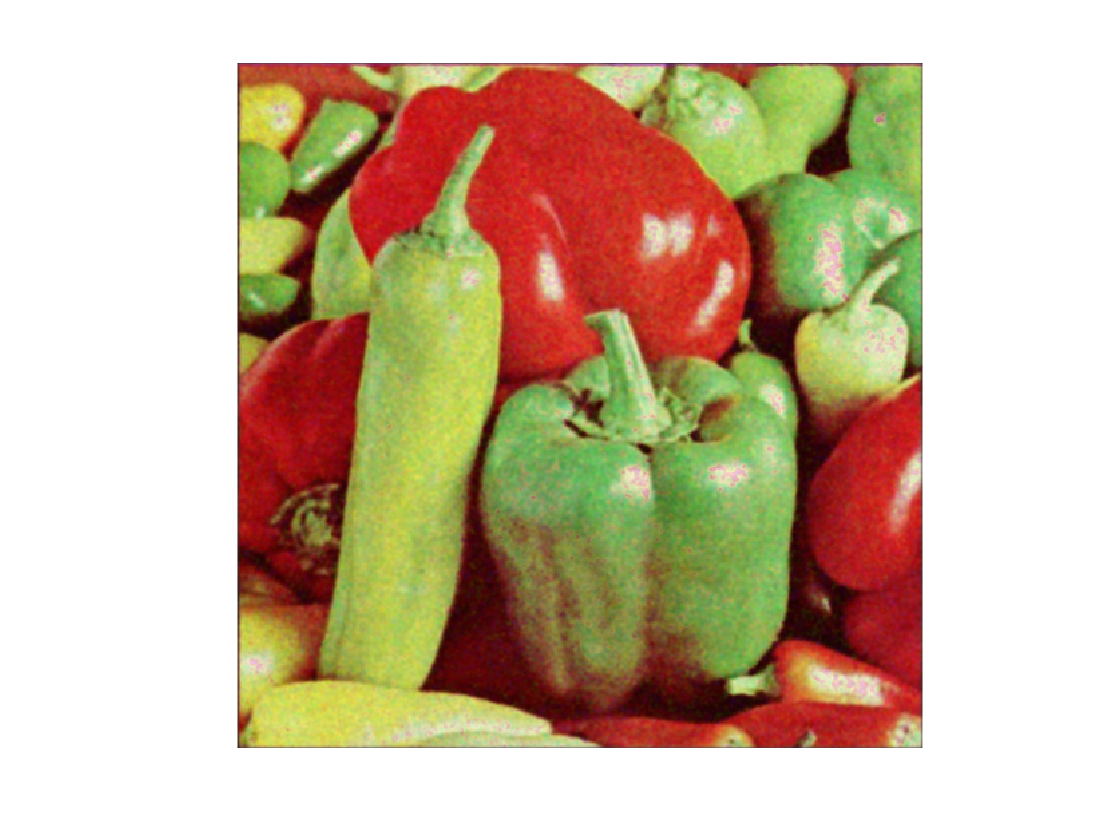}}
\caption{Denoising in the HSV channel on the noisy `peppers' image image (see Figure~\ref{f:Pepper_data}(b)) with $\tau = 10^{-4}$ and $\lambda = 0.85$, $0.9$, and $0.95$. The PSNR are listed below each image to indicate the quality of  the result.See Section~\ref{s:Peppers}.}
\label{f:Pepper_hsv}
\end{center}
\end{figure}

\subsection{Example: an $\mathrm{SPD}(3)$-valued image} \label{s:SPD}
Following \cite[\S5.2]{Bacak_2016}, we consider the reconstruction of a synthetic symmetric positive definite ($\mathrm{SPD}(3)$) matrix-valued image.

An  $\mathrm{SPD}(3)$-valued image is constructed by sampling
$$
G(s,t) := A(s,t) \ \textrm{diag}[v(s,t)] \  A(s,t)^t, 
\qquad \textrm{where} \  s,t \in [0,1].
$$
Here, we define 
\begin{align*}
A(s,t) &= R_{x_2,x_3}(\pi s) R_{x_1,x_2}(|2\pi s-\pi|)R_{x_1,x_2}(|\pi(t-s-|t-s|)-\pi|), \\
v(s,t) &= (1+\delta_{x+y,1}, 1+s+t+\frac{3}{2}\delta_{s,\frac{1}{2}},4-s-t+\frac{3}{2}\delta_{t,\frac{1}{2}})^T
\end{align*}
where $R_{x_i,x_j}(t)$ is the rotation matrix in the $x_i, x_j$-plane with angle $t$ and $\delta_{a,b} =\begin{cases}
1 & \textrm{if} \quad a>b \\
0 & \textrm{else}
\end{cases}$ .

We discretize the parameter space $(s,t)$ with $25\times 25$ grid points to obtain a $25\times 25$ matrix-valued image $f = \{f_{i,j}\}_{i,j=1}^{25} \subset \textrm{SPD}(3)$. The SPD matrix is visualized in Figure~\ref{f:SPDdata}(a) by the corresponding ellipsoid at each pixel location. The noisy data in Figure~\ref{f:SPDdata}(b) is generated by adding Rician noise with standard deviation $0.03$, $\tilde f = A A^T$.
Here,  $A= R+\sqrt{0.03}B $ where $R^TR = f$ is the Cholesky factorization of $f$ and $B$ is a $3\times 3$ upper triangular matrix with each element being a random number from the standard normal distribution. Since $f$ is symmetric positive definite, $R$ is well defined in the Cholesky factorization. We note that adding noise in such way implies that $\tilde f$ is symmetric positive definite.

In this example, we take the target set to be the group of $3 \times 3$ symmetric positive definite matrices, $\textrm{SPD}(3)$, and solve the free space heat diffusion equation in Algorithm \ref{a:alg1}.
Then, we use the mapping 
$$
\Pi_T (A) = U \Sigma_+ V^T
$$
where $U \Sigma V^T$ is the singular value decomposition of $A$ and $\Sigma_+(i,j) = \max(\Sigma(i,j),0)$.

Figure~\ref{f:SPDres} displays the results of reconstruction with $\tau = 10^{-3}$ and $\lambda= 0.05$, $0.1$, and $0.15$, respectively. The denoised images are very close to the original data. Algorithm~\ref{a:alg1} is relatively insensitive to the value of the parameter $\lambda$. 
Figure~\ref{f:SPDres}(a) is slightly smoothed when $\lambda$ is relatively small and the regularity term dominates.
 
\begin{figure}[t]
\centering
\subfigure[Original image.]{\includegraphics[width=.45\textwidth,clip,trim= 6cm 1cm 6cm 1cm]{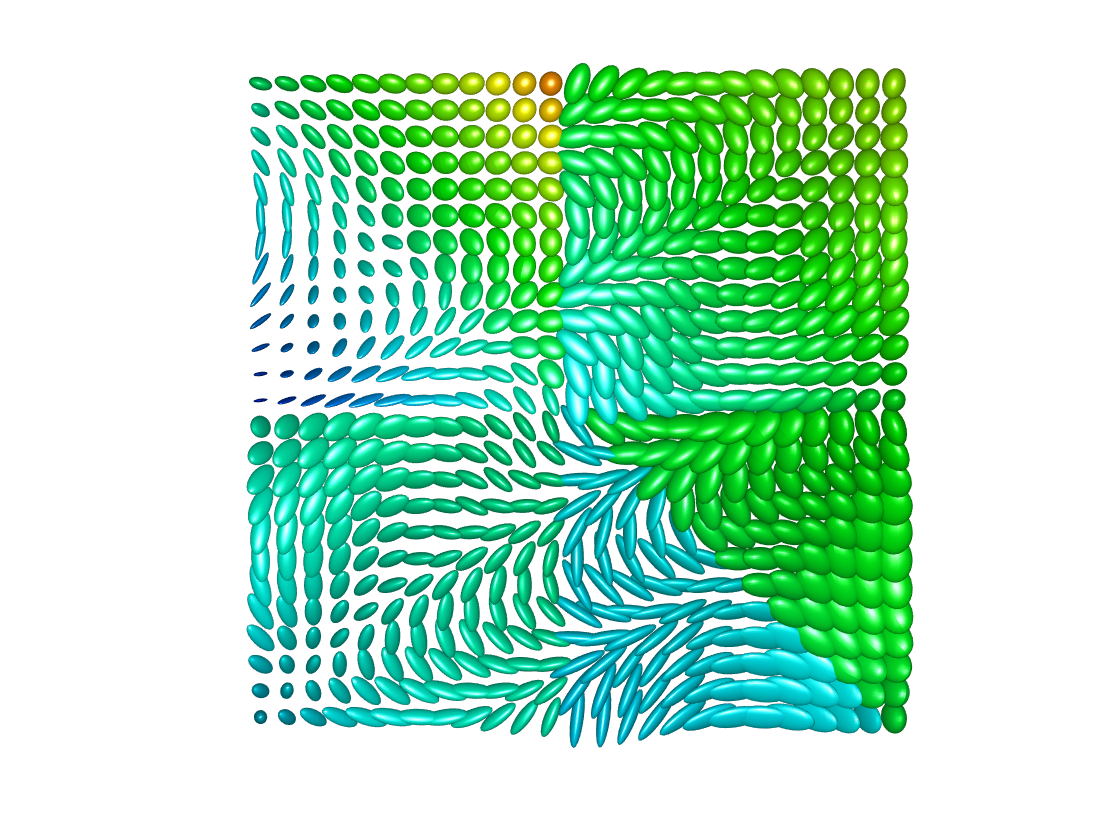}}
\subfigure[Noisy image.]{\includegraphics[width=.45\textwidth,clip,trim= 6cm 1cm 6cm 1cm]{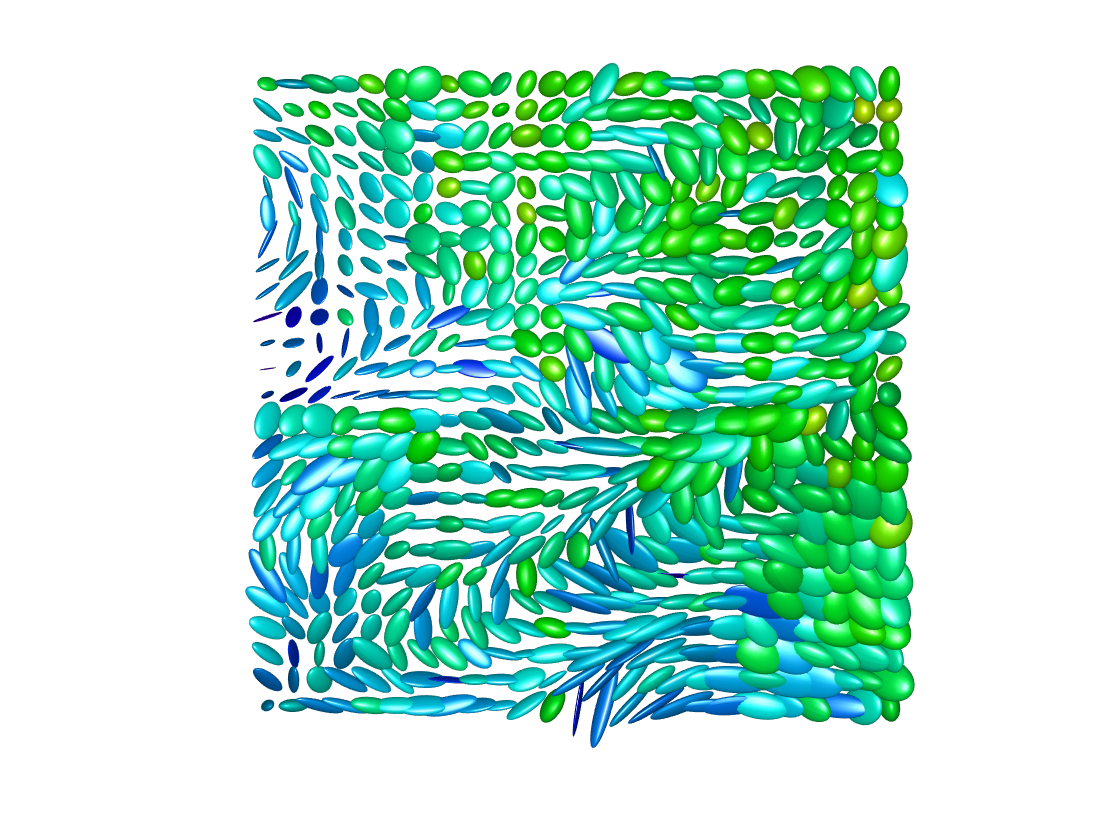}}
\caption{The original and noisy SPD-valued image. See Section~\ref{s:SPD}.}\label{f:SPDdata}
\end{figure}

\begin{figure}[t]
\centering
\subfigure[$\tau = 10^{-3}$ and $\lambda = 0.05$.]{\includegraphics[width=.3\textwidth,clip,trim= 8cm 1cm 6cm 1cm]{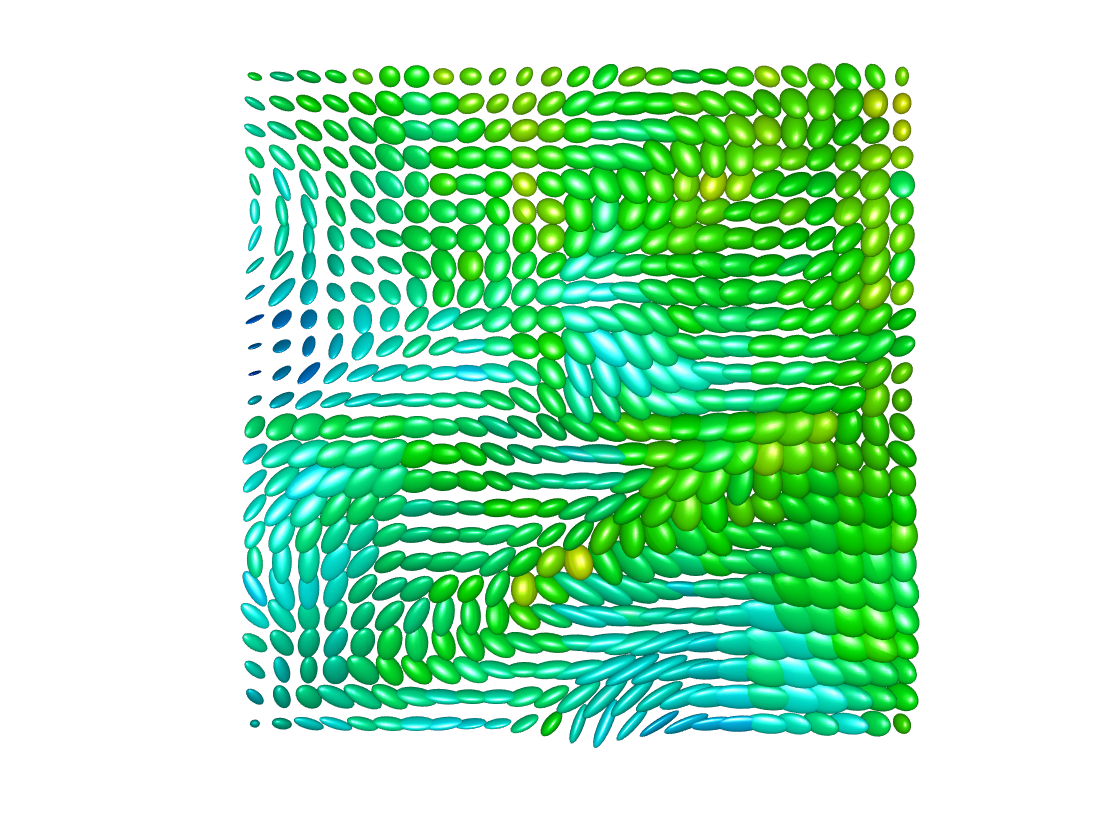}}
\subfigure[$\tau = 10^{-3}$ and $\lambda = 0.1$.]{\includegraphics[width=.3\textwidth,clip,trim= 8cm 1cm 6cm 1cm]{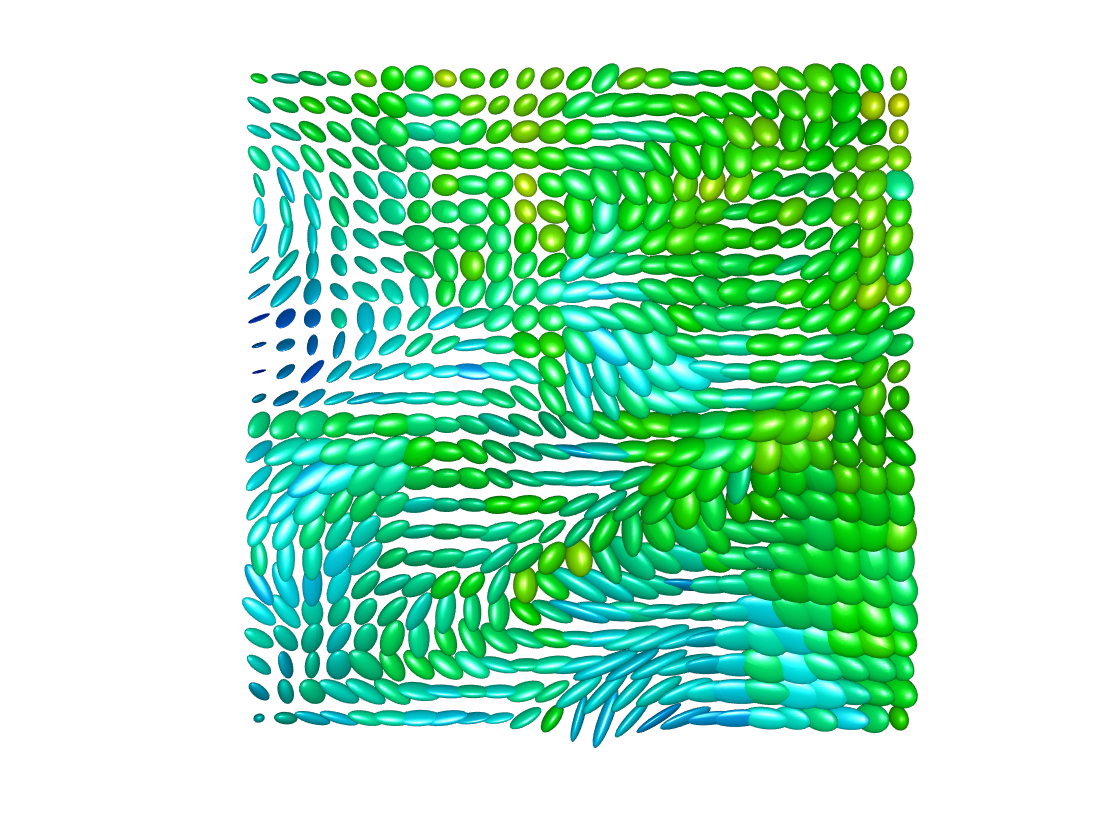}}
\subfigure[$\tau = 10^{-3}$ and $\lambda = 0.15$.]{\includegraphics[width=.3\textwidth,clip,trim= 8cm 1cm 6cm 1cm]{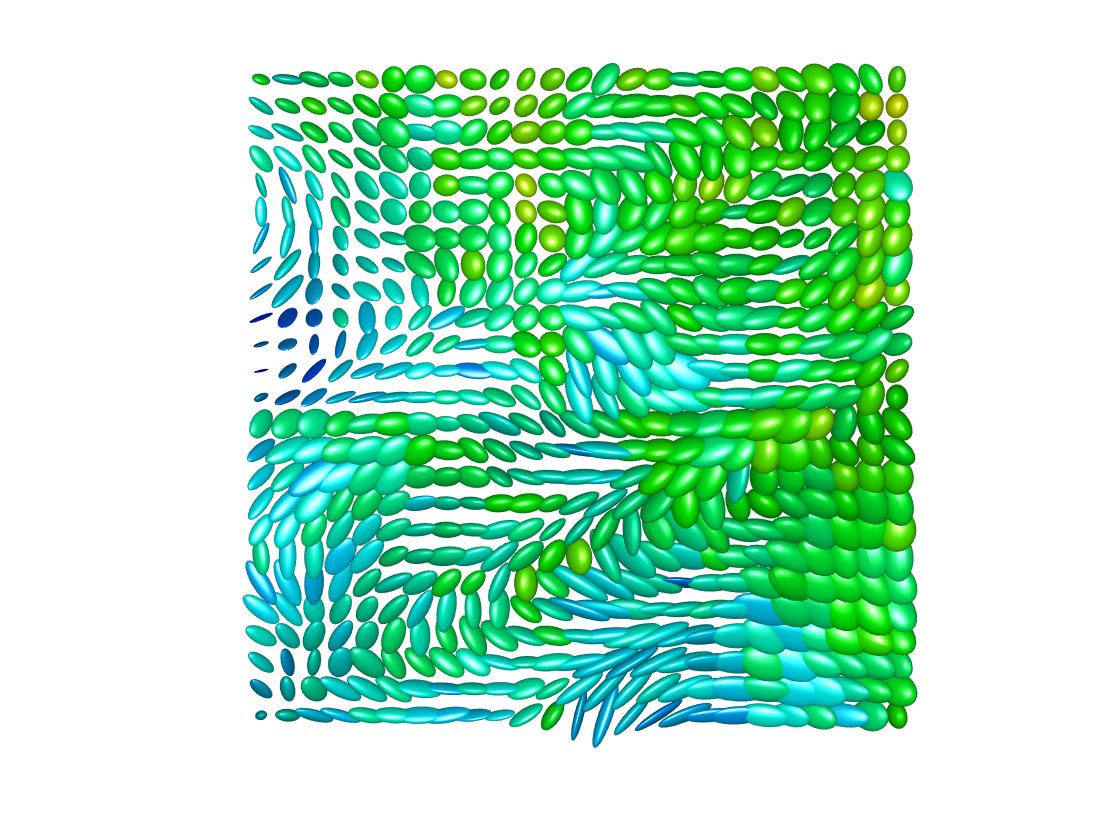}}
\caption{The result of reconstruction  in Figure~\ref{f:SPDdata}(b) with $\tau = 10^{-3}$ and $\lambda = 0.05$, $0.1$, and $0.15$. See Section~\ref{s:SPD}.}\label{f:SPDres}
\end{figure}

\subsection{Example: DT-MRI} \label{s:MRI}
Following \cite[\S5.2]{Bacak_2016}, we apply Algorithm~\ref{a:alg1} to a dataset from the Camino project\footnote{See \url{http://camino.cs.ucl.ac.uk/}.} of a diffusion tensor magnetic resonance image (DT-MRI) of the human head.  From the complete data set of $f \colon \Omega \to \{\mathcal{P}(3)\}^{112\times 112 \times 50}$ where $\Omega = [112] \times [112] \times [50]$,  we take the 28-th traversal plane, $\Omega_0 = [112] \times [112] \times \{28\} \subset \Omega$ as the dataset for reconstruction (See Figure~\ref{f:MRT_data}(a)) and zoom-in on the subset $\Omega_{1} = \{28, \ldots, 87\} \times \{24,\ldots, 73\}\times\{28\}\subset\Omega_0$ in Figure~\ref{f:MRT_data}(b).

As in the example in Section~\ref{s:SPD}, we take the target set to be the group of $3 \times 3$ symmetric positive definite matrices, $\mathrm{SPD}(3)$, and solve the free space heat diffusion equation in Algorithm \ref{a:alg1}. Figure~\ref{f:MRT_res} displays the reconstructions obtained using Algorithm~\ref{a:alg1} of the data in Figure~\ref{f:MRT_data} with $\tau = 10^{-4}$ and $\lambda = 0.1$, $0.2$, and $0.3$. The first row displays the reconstructed data in $\Omega_0$ and the second row displays the subset $\Omega_{1}$ of the corresponding reconstructed data. The results are very similar to those in \cite{Bacak_2016}. 

\begin{figure}[t]
\begin{center}
\subfigure[Slice 28 of the original Camino DT-MRI data, $\Omega_0$.]{\includegraphics[width=.5\textwidth]{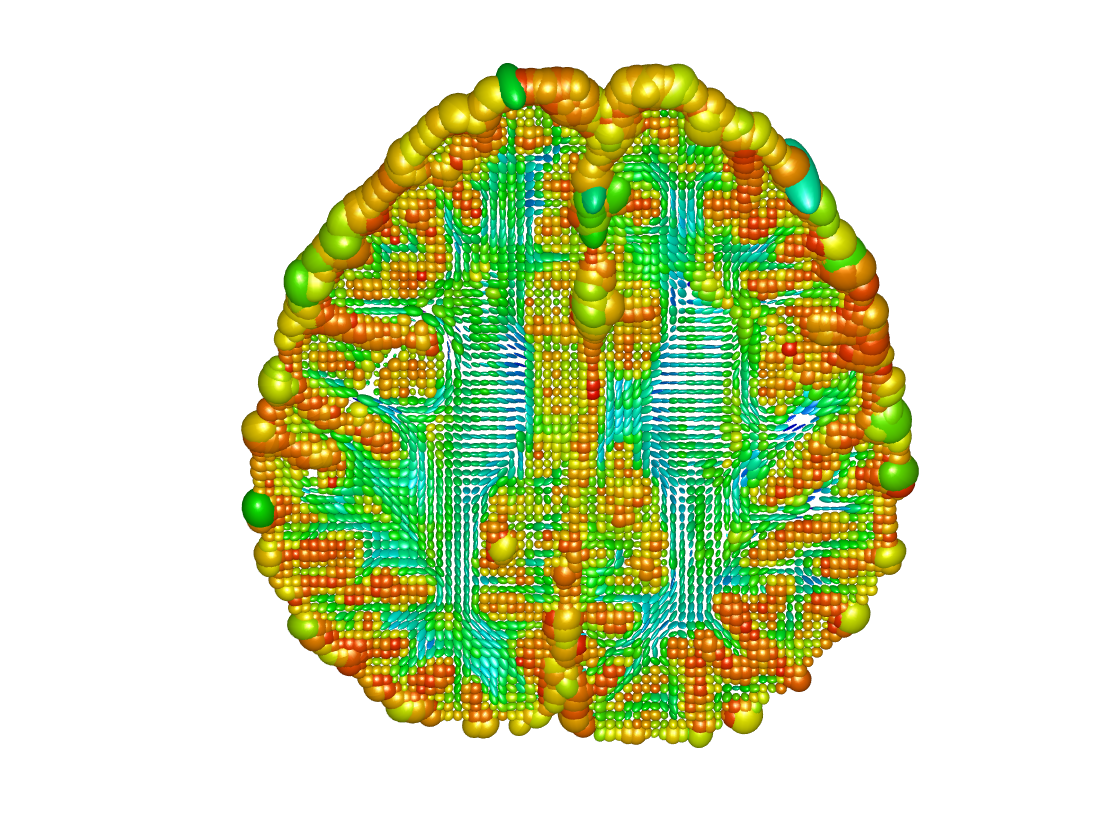}}
\subfigure[Subset $\Omega_{1} \subset \Omega_0$.]{\includegraphics[width=.4\textwidth,clip,trim= 3cm 2cm 3cm 2cm]{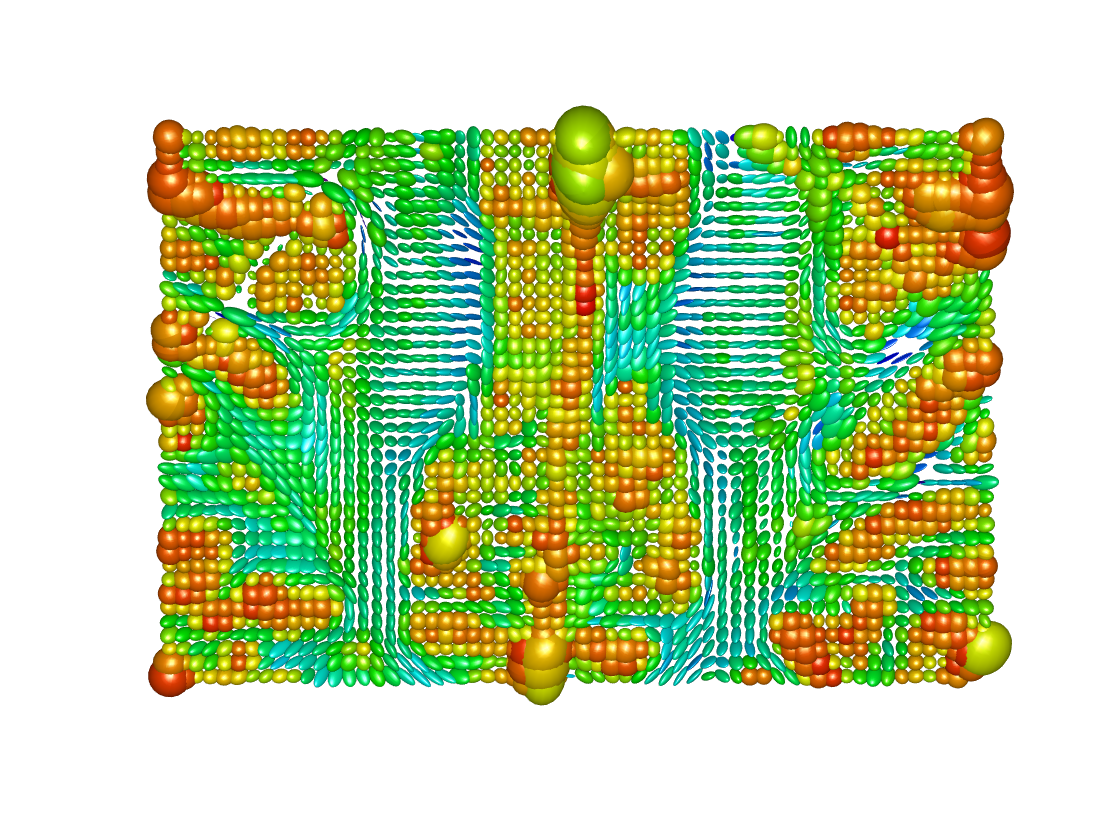}}
\caption{Slice 28 of the original Camino DT-MRI data and the subset $\Omega_{1}$.  See Section~\ref{s:MRI}.}\label{f:MRT_data}
\end{center}
\end{figure}

\begin{figure}[t]
\begin{center}
\subfigure[$\tau = 10^{-4}$ and $\lambda= 0.1$.]{\includegraphics[width=.3\textwidth,clip,trim= 7cm 4cm 7cm 4cm]{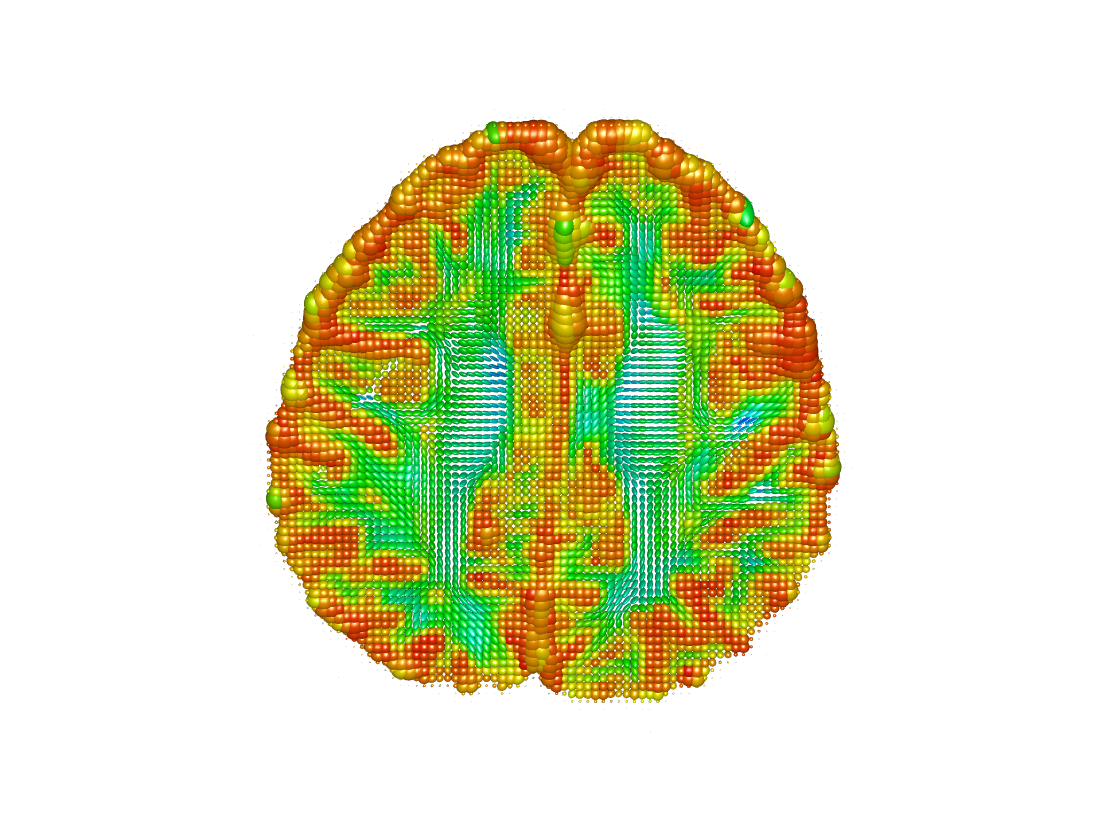}}
\subfigure[$\tau = 10^{-4}$ and $\lambda= 0.2$.]{\includegraphics[width=.3\textwidth,clip,trim= 7cm 4cm 7cm 4cm]{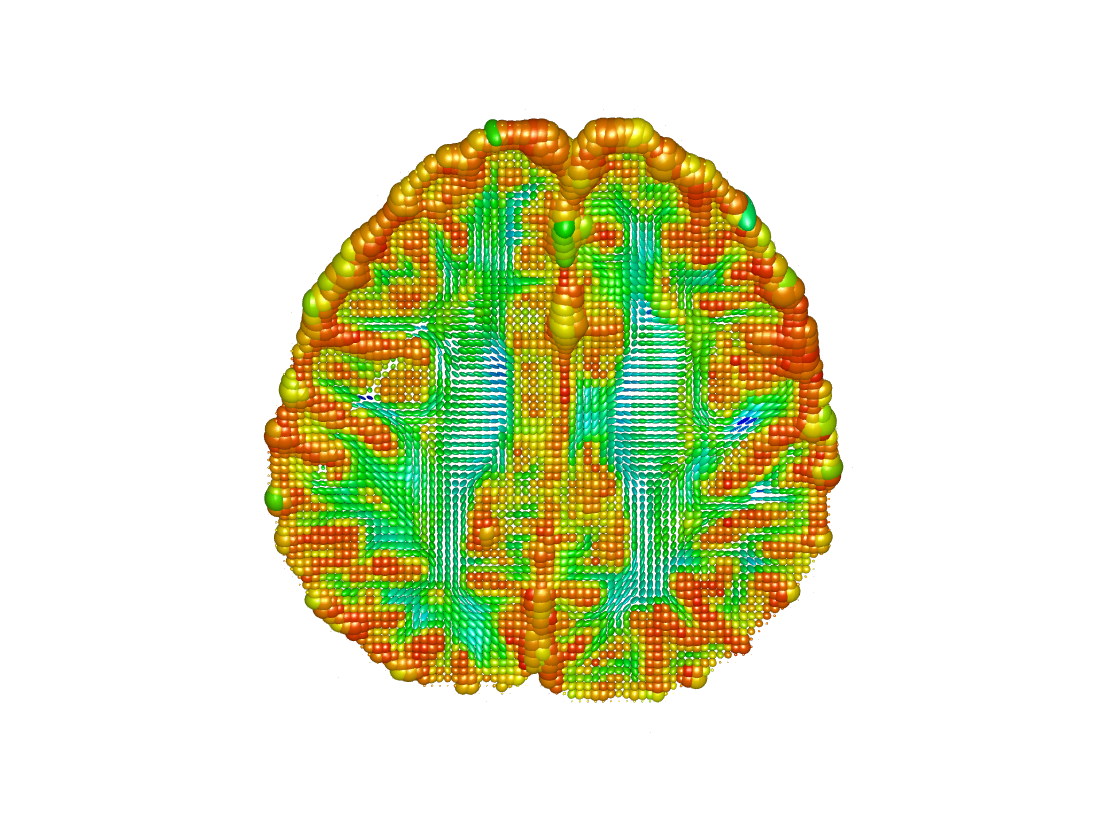}} 
\subfigure[$\tau = 10^{-4}$ and $\lambda= 0.3$.]{\includegraphics[width=.3\textwidth,clip,trim= 7cm 4cm 7cm 4cm]{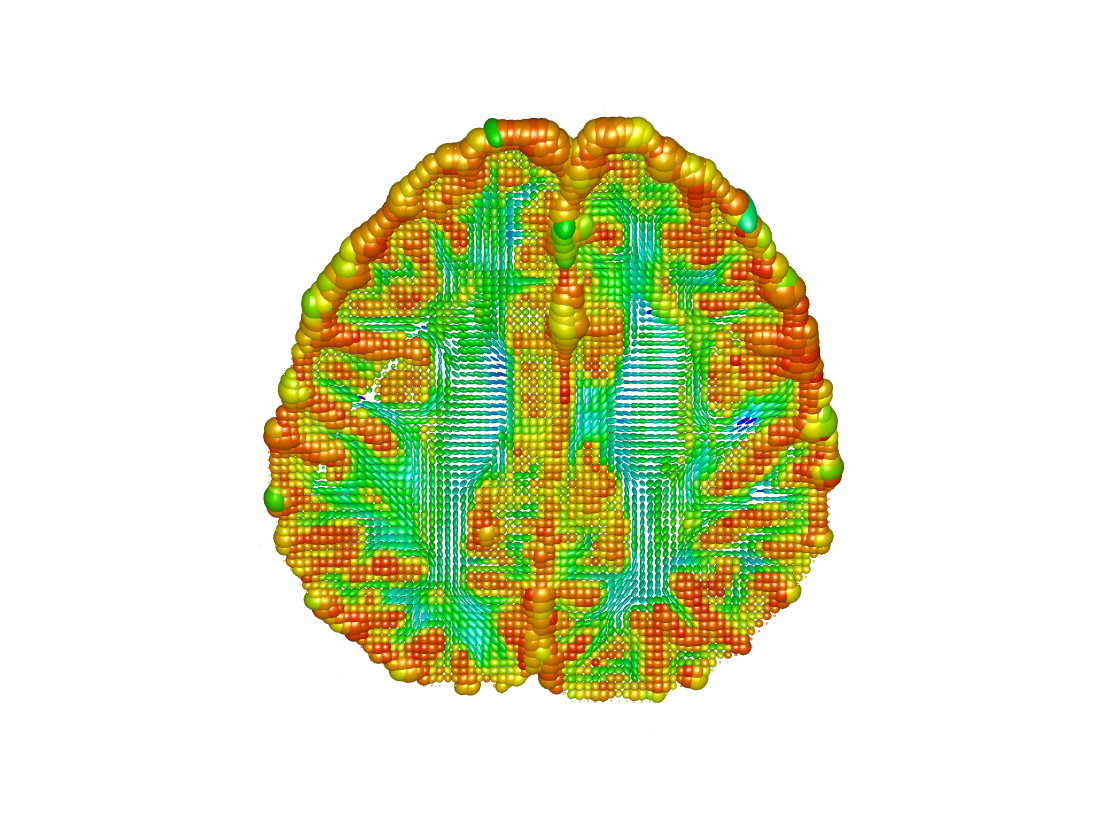}}\\
\subfigure[$\Omega_{1} \subset \Omega_0$ in (a).]{\includegraphics[width=.3\textwidth,clip,trim= 3cm 2cm 3cm 2cm]{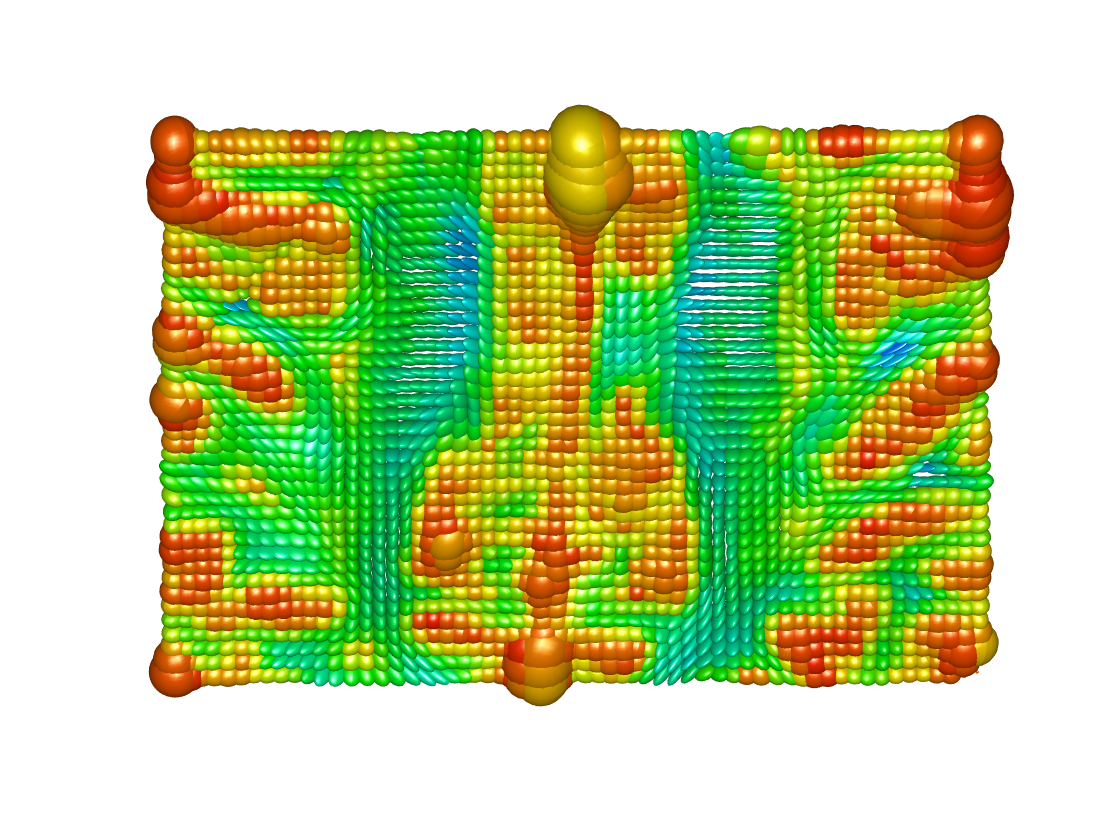}}
\subfigure[$\Omega_{1} \subset \Omega_0$ in (b).]{\includegraphics[width=.3\textwidth,clip,trim= 3cm 2cm 3cm 2cm]{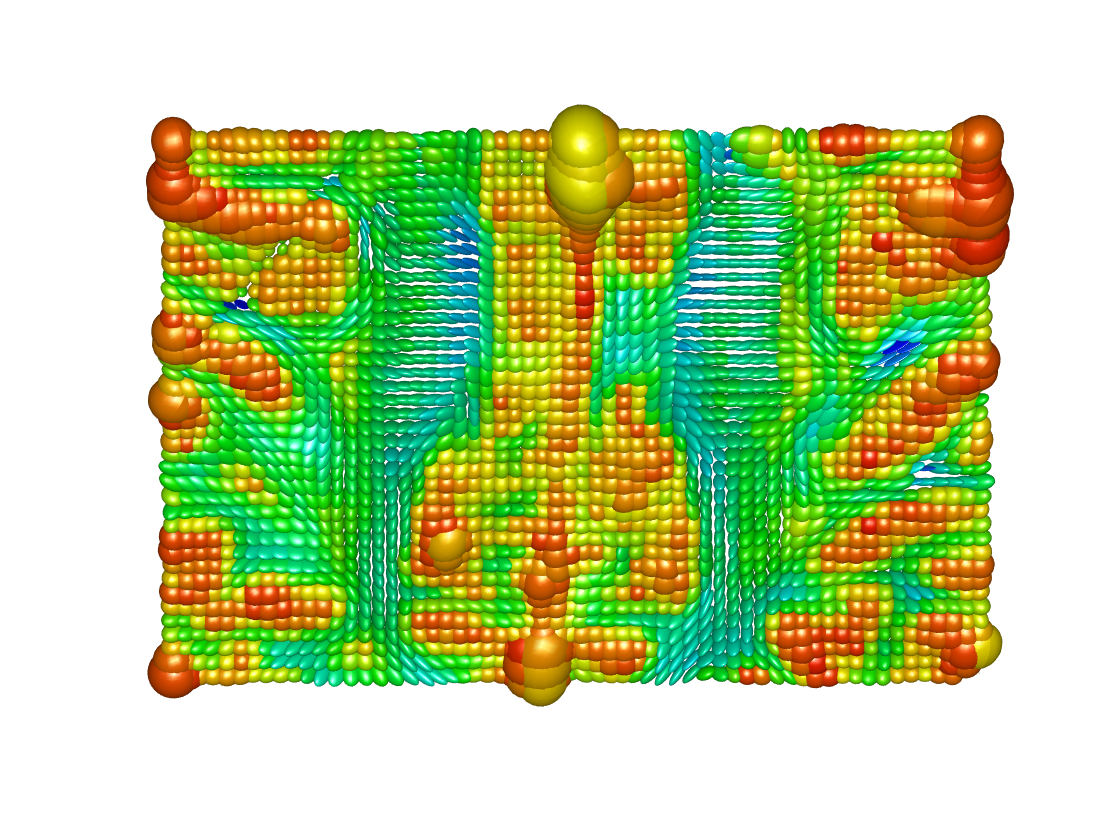}}
\subfigure[$\Omega_{1} \subset \Omega_0$ in (c).]{\includegraphics[width=.3\textwidth,clip,trim= 3cm 2cm 3cm 2cm]{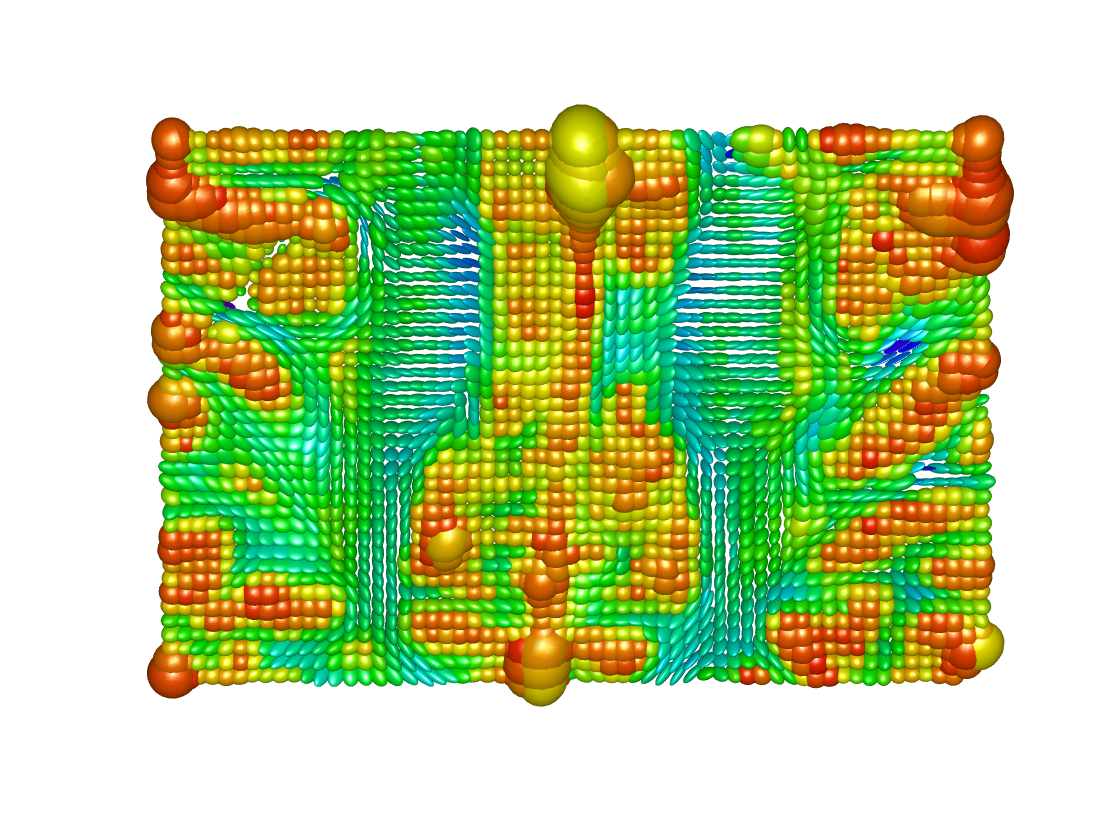}}
\caption{ Results of the reconstruction in Figure~\ref{f:MRT_data}(a) with $\tau = 10^{-4}$ and $\lambda = 0.1$, $0.2$, and $0.3$. See Section~\ref{s:MRI}.}\label{f:MRT_res}
\end{center}
\end{figure}

\subsection{Example: an $\mathbb{R}\mathbb{P}^1$-valued image}\label{s:rpk}
In this section, we denoise a real projective line ($\mathbb{R}\mathbb{P}^1$)-valued image. It is not clear that real projective spaces, $\mathbb{R}\mathbb{P}^{n}$, for $n>1$ can be cast within the framework of the proposed methods. However, due to the topological equivalence of $\mathbb{RP}^1$ with the circle, $\mathbb S^1$, we can study real projection line-valued images as follows.

We recall that $\mathbb{R}\mathbb{P}^{n-1}$ can be viewed as identifying antipodal points of the unit $n$-sphere, $\mathbb S^{n-1} \subset \mathbb R^k$. 
Thus, identifying $\mathbb R^2$ with $\mathbb C$, we can uniquely represent each element $\{x,-x\}\in \mathbb{RP}^1$, by its squared value, $x^2 \in \mathbb S^1 \subset \mathbb C$. We then denoise this representation map using Algorithm \ref{a:alg1} and the mapping $\Pi_T = \frac{x}{|x|}$. Finally, by taking the two square roots of the denoised representation map, we obtain a denoised $\mathbb{RP}^1$-valued image.   A similar strategy is employed in \cite{Viertel2017} for cross-valued fields. 

We define a synthetic $\mathbb{RP}^1$-valued image as follows. 
First define $z(x,y) = \left(\Re(\sqrt{i x - y}),\Im(\sqrt{i x - y})\right)$ where $\Re(\cdot)$ and $\Im(\cdot)$ denote the real and imaginary parts.
Now we define the line field, 
\[f(x,y) = \left \{  \frac{ [z(x,y)]_+}{|z(x,y)}, \ \frac{ [z(x,y)]_-}{|z(x,y)} \right\} \in \mathbb{RP}^1, 
\qquad \qquad \textrm{where} \ x,y \in [-1,1]. \]
Here, $[z]_+ = (|z_1|, z_2) \in S^1 \cap \{z\colon z_1>0\}$ and $[z]_+ = - [z]_+$. 
We discretize the parameter space $(x,y)$ by $20\times 20$ grid points to give a discretized line field, $\{f_{i,j}\}_{i,j=1}^{20} \subset \mathbb{RP}^1$, 
which is displayed in  Figure~\ref{f:rp2data}(a). 
We then add Gaussian noise with standard deviation $0.3$ to $(x_i,y_i)$ point- and component-wise to have $(\tilde{x}_i, \tilde{y}_i)$. Noisy data is then generated by $\tilde f_{i,j} = f(\tilde x_i,\tilde y_j)$ and plotted in Figure~\ref{f:rp2data}(b). 

Proceeding as described above, we view the noisy line field as taking values in $\mathbb C$ and point-wise square the values to obtain the representation field, $\tilde f^2 \subset \mathbb S^1$. We apply Algorithm~\ref{a:alg1} to  $\tilde f^2$  with target set, $T= \mathbb{S}^1$. We then take the point-wise square root of the resulting field to obtain the denoised line field. The results are displayed in Figure~\ref{f:rp2res} with $\tau = 10^{-2}$ and $\lambda = 0.05$, $0.1$, and $0.15$. We observe that the reconstructed images are very close to the original data and the index $+1/2$ singularity is well-preserved. 

\begin{figure}[t]
\centering
\subfigure[Original image.]{\includegraphics[width=.35\textwidth,clip,trim= 6cm 2cm 6cm 2cm]{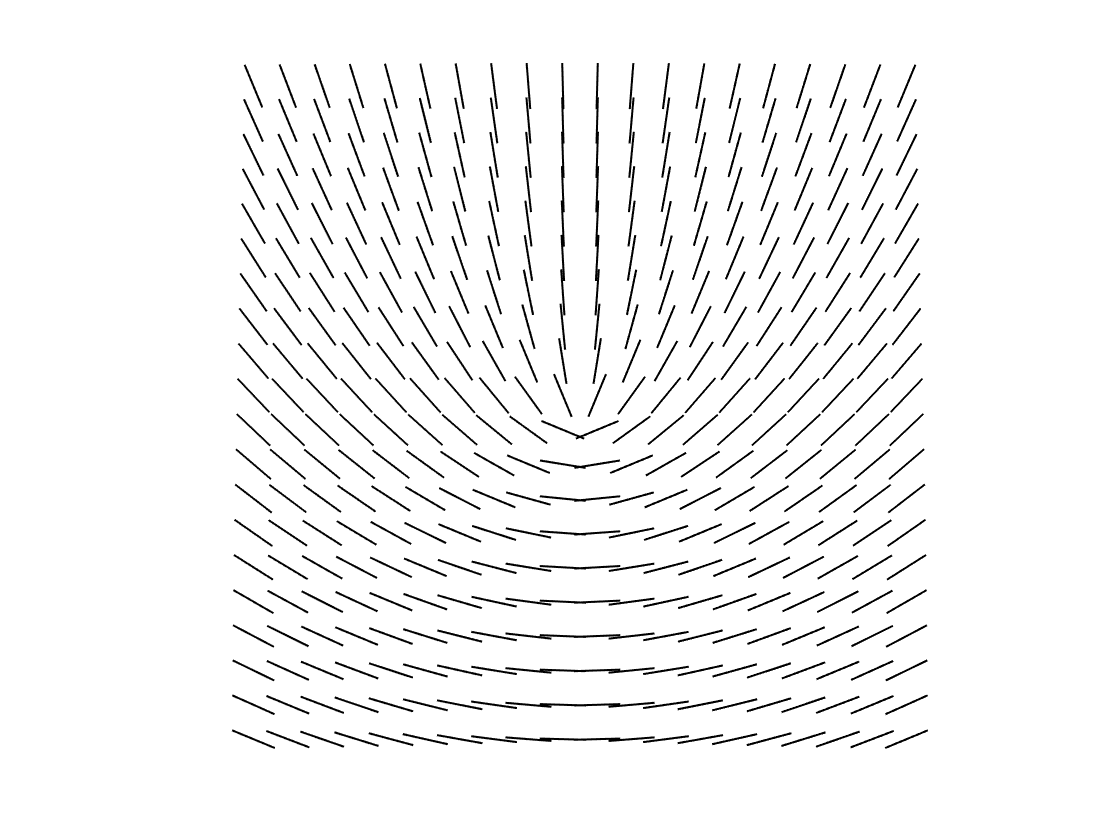}}
\subfigure[Noisy image.]{\includegraphics[width=.35\textwidth,clip,trim= 6cm 2cm 6cm 2cm]{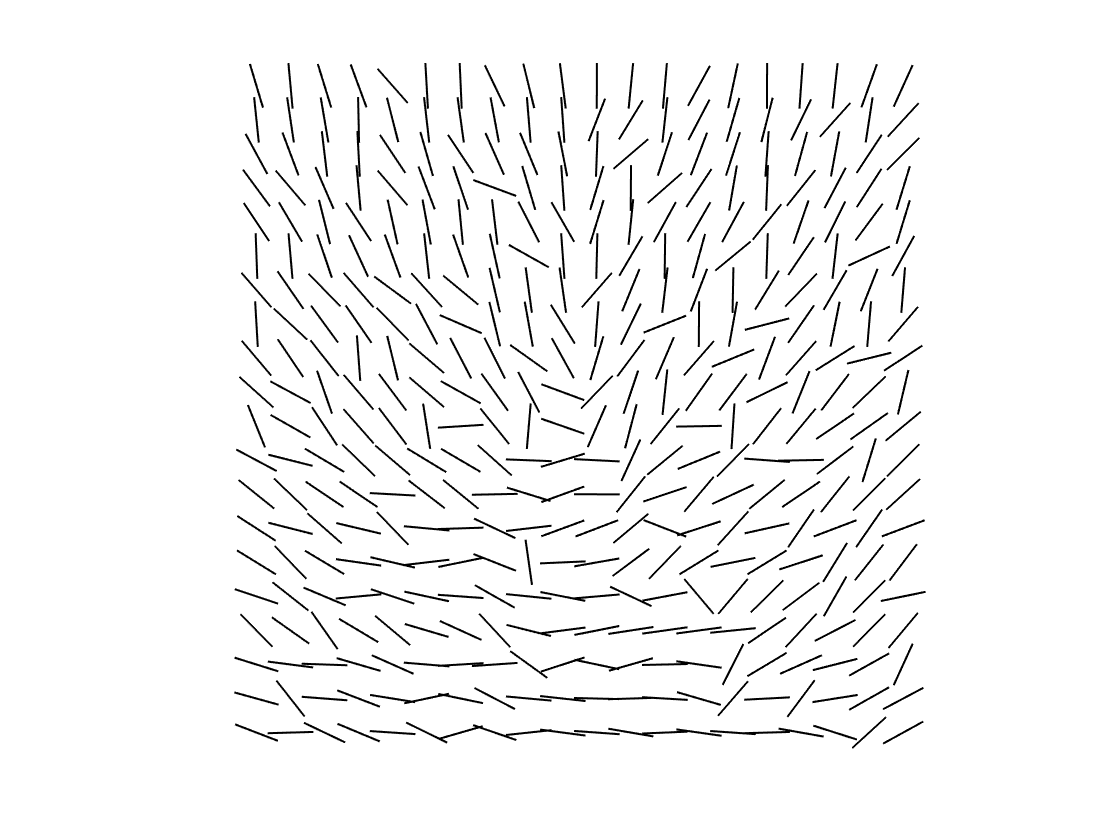}}
\caption{The original and noisy synthetic line-field image.  See Section~\ref{s:rpk}.}\label{f:rp2data}
\end{figure}

\begin{figure}[t]
\centering
\subfigure[$\tau=10^{-2}$ and $\lambda = 0.05$.]{\includegraphics[width=.3\textwidth,clip,trim= 6cm 3cm 6cm 2cm]{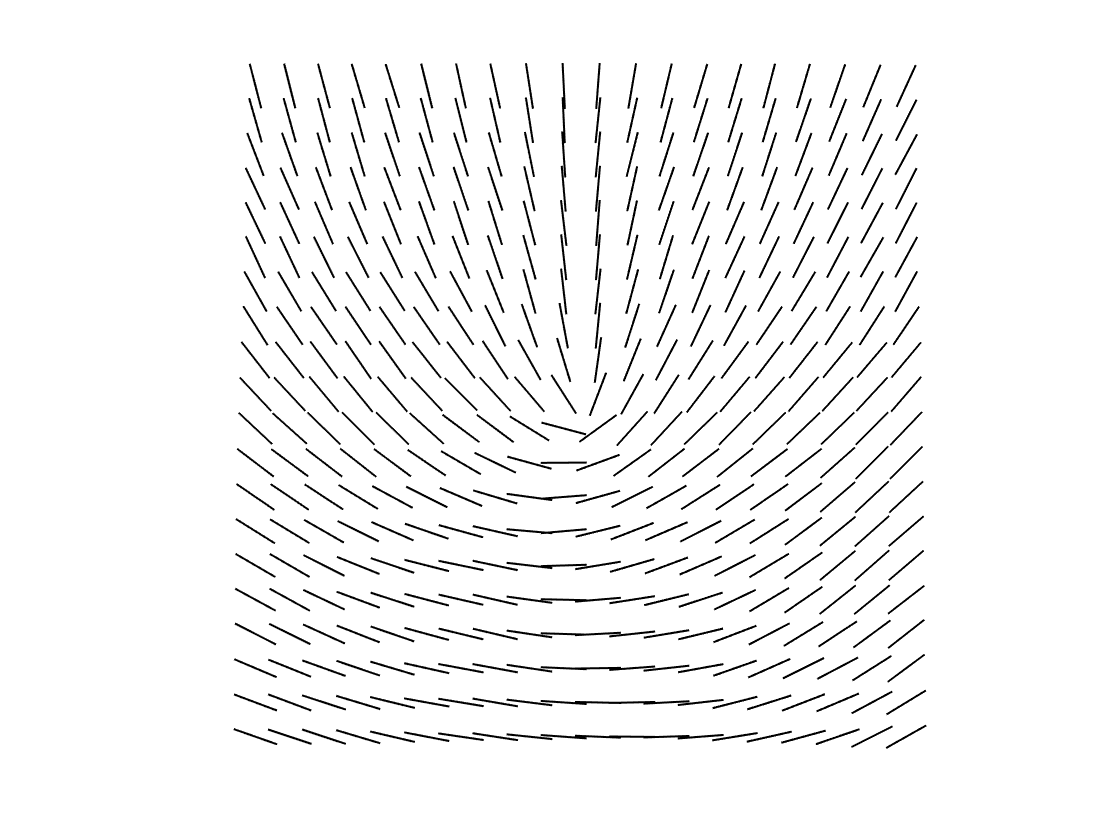}}
\subfigure[$\tau = 10^{-2}$ and $\lambda = 0.1$.]{\includegraphics[width=.3\textwidth,clip,trim= 6cm 3cm 6cm 2cm]{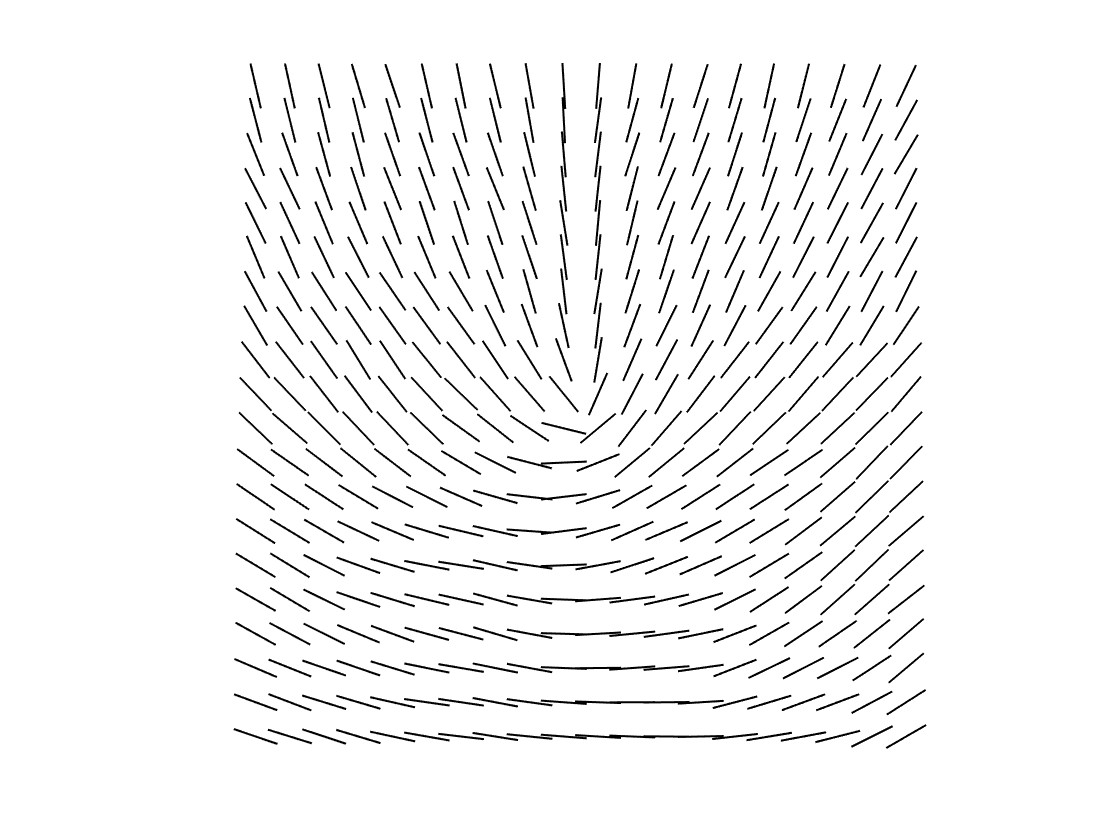}}
\subfigure[$\tau = 10^{-2}$ and $\lambda = 0.15$.]{\includegraphics[width=.3\textwidth,clip,trim= 6cm 3cm 6cm 2cm]{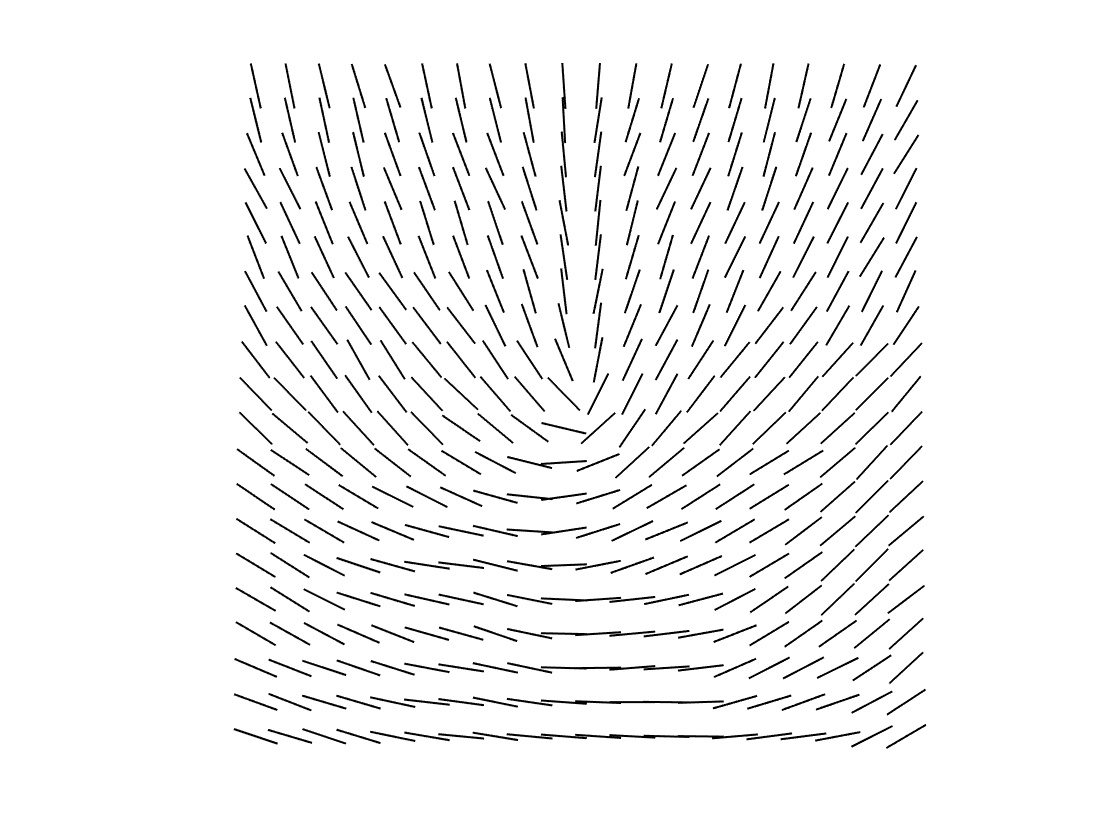}}
\caption{ Denoising results for the line-field in Figure~\ref{f:rp2data}(b) with $\tau = 10^{-2}$ and $\lambda = 0.05$, $0.1$, and $0.15$. See Section~\ref{s:rpk}.}\label{f:rp2res}
\end{figure}

\subsection{Example: a fingerprint image}\label{s:finger}
In this section, using the methods described in Section \ref{s:rpk} for $\mathbb{RP}^1$,  we analyze an image of a fingerprint;  see Figures~\ref{f:finger}(a) and (d). 
From the fingerprint, we extract a very rough line field, $\{f_{ij}\} \subset \mathbb{RP}^1$; see Figures~\ref{f:finger}(b) and (e). 
We apply Algorithm \ref{a:alg1} on the `squared field' $\{f_{ij}^2\}$ and display the denoised results in Figures~\ref{f:finger}(c) and \ref{f:finger}(f). 
We observe that the denoised line field is a good model of the original fingerprint. 
In Figure~\ref{f:finger}(c), the blue dot indicates a singularity with index $1/2$. 
In Figure~\ref{f:finger}(f), there are two singularities: the one indicated by the blue dot has index $1$ and the one indicated by the green dot has index $-1/2$. 

In both experiments, we choose $\tau = 10^{-2}$ and $\lambda = 0.15$.  We extracted the line field at $38 \times 28$ points for \ref{f:finger}(a) and $35 \times 25$ points for \ref{f:finger}(d).  Both simulations were done in $5\times 10^{-3}$ seconds, which demonstrates the efficiency of the proposed algorithm. 

\begin{figure}[t]
\centering
\subfigure[Original fingerprint.]{\includegraphics[width=.3\textwidth,clip,trim= 0cm 0cm 0cm 0cm]{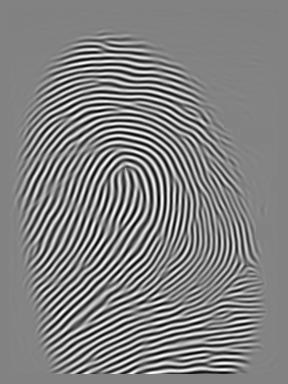}}
\subfigure[Noisy orientation field on the fingerprint (a).]{\includegraphics[width=.3\textwidth,clip,trim= 0cm 0cm 0cm 0cm]{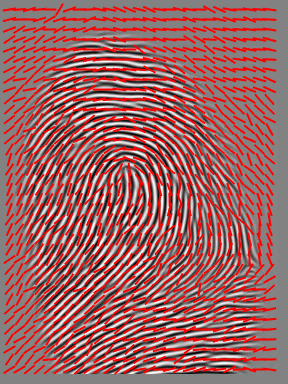}}
\subfigure[$\tau = 10^{-2}$ and $\lambda = 0.15$.]{\includegraphics[width=.3\textwidth,clip,trim= 0cm 0cm 0cm 0cm]{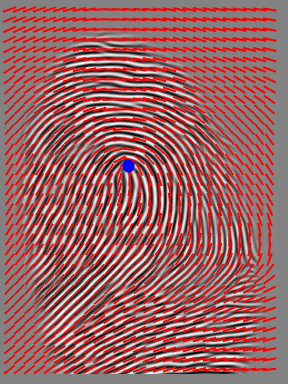}}
\subfigure[Original fingerprint.]{\includegraphics[width=.3\textwidth,clip,trim= 0cm 0cm 0cm 0cm]{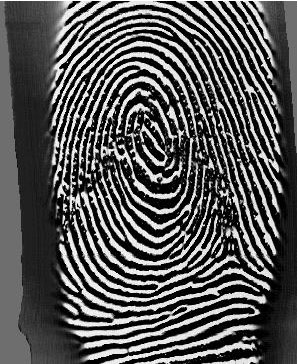}}
\subfigure[Noisy orientation field on the fingerprint (d).]{\includegraphics[width=.3\textwidth,clip,trim= 0cm 0cm 0cm 0cm]{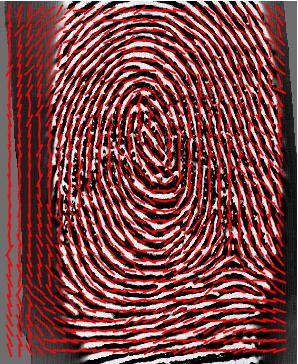}}
\subfigure[$\tau = 10^{-2}$ and $\lambda = 0.15$.]{\includegraphics[width=.3\textwidth,clip,trim= 0cm 0cm 0cm 0cm]{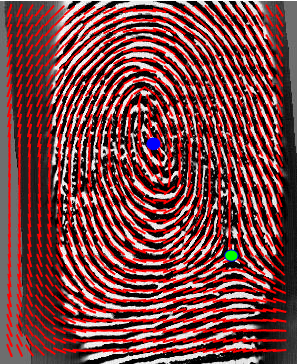}}
\caption{ Denoising results for two fingerprint images with $\tau = 10^{-2}$ and $\lambda = 0.15$. See Section~\ref{s:finger}.}\label{f:finger}
\end{figure}

\section{Discussion} \label{s:Disc}
In this paper, we introduced and analyzed a nonlocal energy for denoising target-valued images. We derived a diffusion generated method to minimize the energy and performed a variety of numerical experiments to show that the method is efficient, stable, and applicable to a wide variety of target-sets. There are a variety of interesting future directions for this work. 

The closest comparison for our numerical results can be found in \cite{Weinmann_2014,Bacak_2016}. The models developed in these papers are nonsmooth variational models which include total variation or second order differences in the regularization term. While we expect that these methods preserve edges better than the proposed method, the results in the numerical experiments are visually very similar. However, due to the simplicity of viewing the target set in an ambient Euclidean space, our methods should be faster. As with any inverse problem, the `best' method depends on the structure of the image and the noise as well as the size of the data. More work should be done to understand the statistical framework for which these methods are consistent and robust estimators. 

In this method, we have taken the domain, $\Omega$, to be a Euclidean set. It would be very interesting to consider the case when $\Omega$ is a graph and the energy \eqref{e:relax2} is formulated using the analogous graph operators \cite{Gennip2013,Bergmann_2018}.

In this work, we have only looked at the denoising problem for target-valued images. Other image analysis tasks for target-valued images, including inpainting, segmentation, and registration, could be handled using similar techniques.

\printbibliography

\end{document}